\theoremstyle{plain}
\newtheorem{theorem}{Theorem}
\newtheorem{lemma}{Lemma} 
\newtheorem{proposition}{Proposition} 
\newtheorem{corollary}{Corollary} 
\theoremstyle{definition}
\newtheorem{example}{Example}
\theoremstyle{remark} 
\newtheorem{remark}{Remark} 
\newcommand{\N}{\mathbb N}
\newcommand{\R}{\mathbb R}
\newcommand{\C}{\mathbb C}
\newcommand{\T}{\mathbb T}
\providecommand{\scal}[2]{\langle#1,#2\rangle}
\newcommand{\id}{\operatorname{id}}
\newcommand{\dime}{\operatorname{dim}}
\newcommand{\trace}{\operatorname{tr}}
\newcommand{\vol}{\operatorname{vol}}
\newcommand{\supp}{\operatorname{supp}}
\newcommand{\card}{\operatorname{card}}
\newcommand{\eps}{\varepsilon}
\newcommand{\cB}{{\mathcal B}}
\newcommand{\cC}{{\mathcal C}}
\newcommand{\cD}{{\mathcal D}}
\newcommand{\cF}{{\mathcal F}}
\newcommand{\cG}{{\mathcal G}}
\newcommand{\cH}{{\mathcal H}}
\newcommand{\cK}{{\mathcal K}}
\newcommand{\cL}{{\mathcal L}}
\newcommand{\cN}{{\mathcal N}}
\newcommand{\cR}{{\mathcal R}}
\newcommand{\cT}{{\mathcal T}}
\newcommand{\cW}{{\mathcal W}}
\begin{document}

\title{A mock metaplectic representation}
\bigskip\bigskip\bigskip
\author{Filippo De Mari}
\address{Filippo De Mari , DIMA,  Via Dodecaneso 35, 16146 Genova, Italy}
\email{demari@dima.unige.it}
\author{Ernesto  De Vito}
\address{Ernesto  De Vito, DIMA,  Via Dodecaneso 35, 16146 Genova, Italy}
\email{devito@dima.unige.it}

\date{\today}
\keywords{reproducing formula, wavelets, admissible vectors}
\subjclass[2000]{43A32,42C15,43A30}
\keywords{reproducing formula \and wavelets \and metaplectic representation}
\subjclass[2000]{42C40, 43A32,  43A65}
\date{\today}
\maketitle

\begin{abstract} 
We obtain necessary and sufficient conditions
for the admissible vectors of  a new  unitary non irreducible representation
 $U$. The group $G$ is an arbitrary semidirect
product whose normal factor  $A$ is abelian and whose homogeneous
factor $H$ is a locally compact second countable group acting on a
Riemannian manifold $M$. The key ingredient in the construction of $U$ 
 is a $C^1$ intertwining map between the actions of $H$ on the dual group $\hat A$ 
 and on $M$. The representation $U$ generalizes the restriction of the metaplectic 
 representation to triangular subgroups of $Sp(d,\R)$, whence the name ``mock metaplectic''.  For simplicity,  we content
ourselves with the case where $A=\R^n$ and $M=\R^d$. The main technical point
is the  decomposition of $U$ as direct integral of its irreducible components.
This theory is motivated by some recent developments in signal analysis, notably 
shearlets. Many related examples are discussed. 
\end{abstract}



\section{Introduction} Unitary representations of semidirect products have been thoroughly studied  
by many authors and are useful in a wide variety of applications. 
In particular, they play a central  r\^ole in the harmonic analysis of the continuous wavelet transform, as discussed in \cite{fuhr05}. From the point of view of applications,  a unitary  representation $U$ of a locally compact group $G$ (with Haar measure $dg$) is particularly useful if it yields a reproducing formula, that is, a weak reconstruction of the form
\begin{equation}
f=\int_G
\scal{f}{U_g\eta}\;U_g\eta\;dg,
\label{RF}
\end{equation}
valid for every $f$ in the representation space $\cH$, for some {\it admissible} vector $\eta\in\cH$. In this case $(G,U,\eta)$ is called a {\it reproducing} system. Alternatively, we simply say that $G$ is a reproducing group. If $U$ is irreducible, this is nothing else but the classical concept of square integrable representation~\cite{dix64}, \cite{dumo76}. Typically, $\cH=L^2(\R^d)$, and in this case an admissible vector $\eta$ is sometimes called a {\it generating function} or {\it wavelet}. Apart from direct use, formula~\eqref{RF} is important also because it is the starting point for its discrete counterparts, an aspect that we shall not develop in the present paper.  It is actually rather interesting to observe that most formulae of the above type that appear in applications, either in their continuous or discrete versions, turn out to be expressible by taking the restriction of the metaplectic representation to some parabolic subgroup $G$ of the symplectic group $Sp(d,\R)$. This is the main theme in the papers \cite{codenota06a}, \cite{codenota06b}, \cite{codenota10} and the present contribution is an outgrowth thereof.

We will be concerned with groups $G$ that are semidirect products, where the normal factor is  an abelian group $A$  and the homogeneous factor is a locally compact second countable group $H$. Our main object of study is a unitary representation $U$ of $G$ whose construction is based on the following ingredients: a Riemannian manifold $M$ on which 
$H$  acts by $C^1$ diffeomorphisms and a $C^1$ map $\Phi: M\to \hat A$ (the dual group of $A$)
that intertwines the actions of $H$ on $M$ and on $\hat A$. The  representation $g\mapsto U_g$ acts   on $L^2(M)$ as pointwise multiplication by  the character $\langle \Phi(\cdot),g\rangle$  if $g\in A$ and  quasi regularly if $g\in H$, as clarified below in \eqref{mock}.  
For simplicity, we take $A=\R^n$ and $M=\R^d$ and we also suppose that the Jacobian of the
action on $\R^d$  is constant. We call $U$  the ``mock'' metaplectic representation because its definition is inspired by the case where $\R^n$ is a vector space of $d\times d$ symmetric matrices on which a closed subgroup $H$ of $GL(d,\R)$ acts by $\sigma\mapsto{}^th^{-1}\sigma h^{-1}$. Under these circumstances, $G$ can be identified with a triangular subgroup of $Sp(d,\R)$ and $U$ is the restriction to $G$ of the metaplectic representation (see Example~1).

General admissibility criteria for type-I groups have been proved in \cite{fuhr05}.  Given the representation $U$ on $\cH$, his theory stems from knowledge of a direct
integral decomposition  $U=\int_{\widehat{G}}m_{\sigma}\sigma\,d\nu(\sigma)$ into irreducible components, and the corresponding decomposition
$\cH=\int_{\widehat{G}}m_{\sigma}\cH_{\sigma}\,d\nu(\sigma)$.  With these data at hand, F\"uhr proves that if $G$ is non-unimodular, then \eqref{RF} holds true for some $\eta$  if and only if $\nu$ has density with respect to $\mu_{\widehat{G}}$, the Plancherel measure of $G$; if $G$ is unimodular, then one has to add the extra conditions
that $m_{\sigma}\leq\dim\cH_{\sigma}$ for $\nu$-almost every $\sigma$ and $\int_{\widehat{G}}m_{\sigma}\,d\nu(\sigma)<+\infty$.  Observe that the measure $\nu$ is known to exist \cite{dix64}, but one has to find it, together with the measurable field $\{\cH_\sigma\}$ and the multiplicity function $\sigma\mapsto m_\sigma$. The explicit knowledge of $\mu_{\widehat{G}}$ is also non trivial, in general, but is  understood for semidirect products 
\cite{klli72}.
Without using the remarkable machinery of \cite{fuhr05}, we explicitly decompose $U$ and thereby obtain, as a byproduct, computable admissibility criteria in terms of the intertwining map $\Phi$.

Our finer results  are Theorem~\ref{MAIN1} and Theorem~\ref{MAIN2}, which  deal with the cases where $G$ is unimodular or non-unimodular, respectively.  They both hold under the standard technical  assumption that the $H$-orbits are locally closed in $\Phi(\R^d)$ and assuming also that almost all $H$-stabilizers in $\Phi(\R^d)$ are compact. 
The latter assumption may be removed and yields the weaker conclusion given in Theorem~\ref{adm2}.  Theorem~\ref{MAIN2}  actually contains the following result:  if $G$ is  non-unimodular $U$ is reproducing if and only if  the set of critical points of $\Phi$ has Lebesgue measure zero. This is of course very easy to check in the examples in which $\Phi$ is explicitly known.
In the case where $n=d$ and where $\Phi$ is a homogeneous polynomial, circumstances that happen in many examples, then $U$ is reproducing if and only if $G$ is non unimodular and the stabilizers are almost all compact (see Theorem~\ref{neqd}). This last result settles the problem that was the original motivation of this work.

Here is an outline of the other results contained in the paper.
\begin{itemize}
\item Theorem~\ref{nleqd}, which establishes an important necessary condition for  a reproducing formula \eqref{RF} to hold true: $\Phi$ must map sets of positive measure into sets of positive measure, hence the critical points $\cC$ have zero Lebesgue measure and $n\leq d$. Thus we introduce an open $H$-invariant  subset $X$ of $\R^d$  with negligible Lebesgue complement  whose image is denoted by $Y=\Phi(X)\subseteq\R^n$ in such a way that $\Phi$ is a submersion of $X$ onto $Y$. 
The fibers $\Phi^{-1}(y)$ are therefore Riemannian submanifolds of $X$ and play a crucial r\^ole in what follows. All the results except Theorem~\ref{nleqd} will be formulated for $X$ and $Y$, namely, for the map $\Phi:X\to Y$, and hold true under the assumption that  $\cC$ has zero Lebesgue measure (see Assumption~1).

\item Theorem~\ref{corcoar}, based on the classical coarea formula, shows how the Lebesgue measure of $X$ disintegrates into a family of measures $\{\nu_y\}$ concentrated on the fibers  $\Phi^{-1}(y)$, whose covariance with respect to the $H$-action is explicitly calculated in \eqref{alfabeta}. 
\item Theorem~\ref{adm1}, where a first  reduction criterion for admissible vectors is given. One looks at the $H$-orbits in $Y$ and takes their preimages under $\Phi$ in $X$. Upon selecting an origin $y$ in each  $H$-orbit in $Y$, one gets  the fiber $\Phi^{-1}(y)$.  The theorem states that  it is necessary and sufficient to test that, for almost every  $H$-orbit in $Y$,  the $L^2$-norm with respect to $\nu_y$ of any $u\in L^2(X,\nu_y)$ can be reproduced by the (weighted) $H$-integral of  the square modulus  $|\scal{u}{\eta_y^h}_{\nu_y}|^2$ of the components of $u$  along the $H$-translates of the restriction to $\Phi^{-1}(y)$ of the admissible vector $\eta$. This is  formula~\eqref{FIRST}.
\item Theorem~\ref{intertwine}, which exhibits a direct integral decomposition of $U$ in terms of induced representations of isotropy subgroups of $H$, and is independent of any admissibility issue. This is achieved as follows.
\begin{itemize}
 \item
First of all, we assume that the $H$-orbits are locally closed in $Y$. This is  a standard assumption, without which most results in the current literature on these themes cannot be applied.
In Section~\ref{TD} we make some technical comments on this in relation to the recent results in \cite{fu09}.
\item
Secondly, we derive  a disintegration of the Lebesgue measure on $Y$ {\it \`a la Mackey}, that is, $dy=\int_{ Z}\tau_{z}\,d\lambda(z)$. Here $\lambda$ is a pseudo-image measure on the locally compact second countable space $ Z$  which is a nice parametrization of the orbits (better than $Y/H$) and $\tau_{z}$ is concentrated on the orbit corresponding to $z\in{ Z}$. This preliminary disintegration  is carried out  in Theorem~\ref{Tmackey}, where the covariance of $\{\tau_{z}\}$ with respect to the $H$-action is also calculated in \eqref{mackey_inv}.
\item
In Proposition~\ref{double} we use the measures $\{\tau_{z}\}$ in order  to ``glue'' together the measures $\nu_y$ for all $y$ in the same orbit, thereby producing new measures $\mu_{z}=\int_Y\nu_y\,d\tau_{z}(y)$ on $X$ which, in turn, allow to disintegrate the Lebesgue measure on $X$ as $dx=\int_{ Z}\mu_{z}\,d\lambda(z)$. As before, 
 the covariance of $\{\mu_{z}\}$ with respect to the $H$-action is calculated.
 The reason for introducing these measures are formulae \eqref{idemackey} and \eqref{idecoarea}: the representation space of $U$, namely $L^2(X)$, is formally the  double direct integral
 $$
 L^2(X)=\int_{ Z}\bigl(\int_Y L^2(X,\nu_y)\,d\tau_{z}(y)\bigr)
 \,d\lambda({z}),
 $$
 where the inner integral is $L^2(X,\mu_{z})$. 
\item
Next we show in Lemma~\ref{S} that $L^2(X,\mu_{z})$ is unitarily equivalent to the representation space $\cH_{z}$ of the representation $W_{z}$ which is unitarily induced to $G$ by the quasi regular representation of the stabilizer $H_{o(z)}$ (naturally extended to the  semidirect product $\R^n\rtimes H_{o(z)}$). Here it is important to select an origin $o(z)$ of the orbit in $Y$ whose label is $z$.
\end{itemize}

The conclusion of Theorem~\ref{intertwine} is that $U$ is equivalent to 
$\int_{ Z}W_{z}\,d\lambda({z})$, with an explicit intertwining isometry. The main technical ingredient of this part is the theory of disintegration of measures, as developed by Bourbaki
and it is reviewed in Appendix~\ref{DIS} under the simplifying assumption that the spaces are second countable.


\item Theorem~\ref{pazzini} assumes that the stabilizers of the $H$ action on $Y$ are almost all compact and it is based on the theory of von~Neumann algebras. It takes care of a nontrivial measurability issue involved in the decomposition of the map $z\mapsto W_{z}$ as direct sum of its irreducible components.
\end{itemize}

Finally, in Section~\ref{examples} we illustrate several examples.

\section{Notation and assumptions}
In this section we fix the notation and describe the setup.  We start by recalling the notions of {\em reproducing group} and {\em admissible vector}. For a thorough discussion the reader is referred to \cite{fuhr05}. 

Let $G$ be a locally compact group with (left) Haar measure $dg$ and $U$ be a strongly continuous unitary representation of $G$ acting on the complex separable Hilbert space $\cH$. A vector $\eta\in\cH$ is called
admissible if 
$$
\|f\|^2=\int_{G}|\scal{f}{U_g\eta}|^2\,
dg\qquad\text{for all }f\in\cH.
$$
If such a vector exists, we say that $G$ is a reproducing group and that $U$ is a {\em reproducing representation}.  Clearly, if $U$ is reproducing, then it is a cyclic representation, but in general
it is not irreducible.  When $U$ is irreducible, the representation is reproducing if and only if it is square integrable \cite{dumo76}.

\subsection{The semidirect product} Let $H$ be a locally compact second countable group acting on $\R^n$ by
means of the continuous representation
\begin{equation}
  \label{repn}
y\mapsto h[y],\qquad h\in H.
\end{equation}
Let $G$ be  the semidirect product $G=\R^n\rtimes H$ with  group law 
$$
(a_1,h_1)(a_2,h_2)=(a_1+h^\dagger_1[a_2],h_1h_2)\qquad
a_1,a_2\in\R^n,h_1,h_2\in H,
$$
where $h^\dagger[\cdot]$ is the action given by the contragredient representation of $H$ on $\R^n$ defined via the usual inner product  by 
\begin{equation}
\scal{h^\dagger[a]}{y}=\scal{a}{h^{-1}[y]},
\qquad a,y\in\R^n.
\label{contra}
\end{equation}
Since $h[\cdot]$ is linear, the semidirect product is well defined and $G$ is a locally compact second countable group. Conversely, any locally compact second countable group $G$ that is the semidirect product
of a closed subgroup  and a normal subgroup, which is a real vector space of dimension $n$, is of the above form.

The (left) Haar measures of $G$ and $H$ are written $dg$ and $dh$,
and, similarly, $da$ is the Lebesgue measure on $\R^n$.  The modular functions
of $G$ and $H$ are denoted by $\Delta_G$ and $\Delta_H$,
respectively. The following relations are easily established
\begin{align}
 \label{eq:1}
 dg & = \frac{1}{\alpha(h)}da\,dh \\
 \Delta_G(a,h) & = \frac{\Delta_H(h)}{\alpha(h)}\label{modularG}
\end{align}
where $\alpha:H\to (0,+\infty)$ is the character of $H$ defined by
\begin{equation}
  \label{eq:40}
  \alpha(h)  =|\det(a\mapsto h^\dagger[a])|=|\det(y\mapsto h^{-1}[y])|.
\end{equation}
The Fourier transform $\cF:L^2(\R^n)\to L^2(\R^n)$ is defined by
$$
(\cF f)(y)=\int_{\R^n} e^{-2\pi i \scal{y}{a}} f(a)\ da,
\qquad f\in L^2(\R^n)\cap L^1(\R^n).
$$
In general,  if $G$ is any locally compact second countable group, $L^2(G)$ will denote the Hilbert space of square integrable functions with respect to left Haar measure. Finally, if $X$ is a locally compact second countable  topological space, the Borel $\sigma$-algebra on $X$ is denoted $\cB(X)$ and $C_c(X)$ denotes the space of complex continuous functions on $X$ with compact support. By  {\it measure} we mean a $\sigma$-additive  function $\mu$ on $\cB(X)$ with values in $[0,+\infty]$ which is finite on compact sets. The hypothesis on $X$ implies that any such measure is automatically inner and outer regular \cite{lang95}. A function $f:X\to X'$ between two  such spaces will be called Borel measurable if $f^{-1}(B)\in\cB(X)$ for every $B\in\cB(X')$ and $\mu$-measurable if  
$f^{-1}(B)\in\cB_{\mu}(X)$, where $\cB_{\mu}(X)$ denotes  the completion of $\cB(X)$ with respect to $\mu$. When dealing with open subsets of Euclidean spaces endowed with the Lebesgue measure, however, we say measurable to mean Lebesgue measurable. Finally, if $E\in\cB(\R^d)$ we write $|E|_d$ for it Lebesgue measure or simply $|E|$ if no confusion arises.

\subsection{The mock metaplectic representation} Suppose we are given:
\begin{itemize}
\item[(H1)] a  continuous action  of $H$ on $\R^d$ by smooth maps denoted $x\mapsto h.x$, whose Jacobian is constant and equal to $\beta(h)$; for $h\in H$ and  $E\in\cB(\R^d)$ we thus have
\begin{equation}
  \label{beta}
  |h.E|= \beta(h)|E|,
\end{equation}
that is, for every $\varphi\in C_c(\R^d)$
\begin{equation}
  \label{betabeta}
\int_{\R^d}\varphi(h^{-1}.x)\,dx=\beta(h)\int_{\R^d}\varphi(x)\,dx.
\end{equation}

\item[(H2)] a $C^1$-map $\Phi:\R^d\to\R^n$ intertwining the two actions of $H$ on $\R^d$ and $\R^n$:
\begin{equation}
  \label{PHI}
  \Phi(h.x)=h[\Phi(x)]\qquad x\in\R^d,\,h\in H.
\end{equation}
\end{itemize}
For $g=(a,h)\in G$ we define $U_g:L^2(\R^d)\to L^2(\R^d)$ by 
\begin{equation}
(U_g f)(x) =\beta(h)^{-\frac{1}{2}} e^{-2\pi i \scal{\Phi(x)}{a}}f(h^{-1}.x)
\label{mock}
\end{equation}
for almost every $x\in\R^d$. We show below that this is indeed a representation, that we call the {\it mock metaplectic} representation. For a motivation for the choice of this name, see Example~\ref{eclass} below.

\begin{remark} The representation \eqref{repn} of $H$ on $\R^n$ plays
  no direct r\^ole in the definition of $U$; its purpose is to
  construct the semidirect product $G$.
\end{remark}

\begin{remark} Occasionally, we shall write $f^h(x)$ for $f(h^{-1}.x)$.\end{remark}

\begin{remark} At this stage there are no limitations on the relative sizes of $n$ and $d$, but we shall see later (Theorem~\ref{nleqd}) that in the situations that are of interest to us $n\leq d$.
\end{remark}
The next proposition records that \eqref{mock} is a good definition.
\begin{proposition}\label{mock_prop}
 The map $g\mapsto U_g$ is a strongly continuous unitary
 representation of $G$ acting on $L^2(\R^d)$. 
\end{proposition}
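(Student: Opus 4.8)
The plan is to verify in turn the four defining properties of a strongly continuous unitary representation: that each $U_g$ is a well-defined bounded operator, that it is isometric and invertible (hence unitary), that $g\mapsto U_g$ is multiplicative, and that $g\mapsto U_gf$ is norm continuous for each fixed $f$. The first three are essentially algebraic and rest on three structural facts, namely the multiplicativity of the Jacobian character $\beta$, the intertwining relation \eqref{PHI}, and the contragredient identity \eqref{contra}; the continuity is the only genuinely analytic point. As a preliminary I would record that $\beta\colon H\to(0,+\infty)$ is a continuous homomorphism: multiplicativity follows from the chain rule applied to $h_1.(h_2.x)=(h_1h_2).x$, which forces $\beta(h_1h_2)=\beta(h_1)\beta(h_2)$, while continuity follows from \eqref{betabeta} by writing $\beta(h)=\bigl(\int\varphi(h^{-1}.x)\,dx\bigr)/\int\varphi\,dx$ for a fixed $\varphi\in C_c(\R^d)$ with $\int\varphi\neq0$ and invoking continuity of the action together with dominated convergence.

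For unitarity, since the character $e^{-2\pi i\scal{\Phi(x)}{a}}$ has modulus one, a change of variables using \eqref{betabeta} with $\varphi=|f|^2$ gives $\|U_gf\|_2^2=\beta(h)^{-1}\int_{\R^d}|f(h^{-1}.x)|^2\,dx=\beta(h)^{-1}\beta(h)\|f\|_2^2=\|f\|_2^2$, so each $U_g$ is a well-defined isometry of $L^2(\R^d)$. For the homomorphism property I would take $g_1=(a_1,h_1)$, $g_2=(a_2,h_2)$ and compute $(U_{g_1}U_{g_2}f)(x)$ directly. The argument of $f$ becomes $h_2^{-1}.(h_1^{-1}.x)=(h_1h_2)^{-1}.x$, the Jacobian prefactors combine to $\beta(h_1h_2)^{-1/2}$ by multiplicativity, and the two phases multiply to $e^{-2\pi i(\scal{\Phi(x)}{a_1}+\scal{\Phi(h_1^{-1}.x)}{a_2})}$. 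Here one uses \eqref{PHI} to replace $\Phi(h_1^{-1}.x)$ by $h_1^{-1}[\Phi(x)]$ and then \eqref{contra}, together with the symmetry of the inner product, to rewrite $\scal{h_1^{-1}[\Phi(x)]}{a_2}=\scal{\Phi(x)}{h_1^\dagger[a_2]}$, so the combined phase is $e^{-2\pi i\scal{\Phi(x)}{a_1+h_1^\dagger[a_2]}}$. Comparing with the group law, this is exactly $(U_{g_1g_2}f)(x)$. Since $U_{(0,e)}=\id$, each $U_g$ has inverse $U_{g^{-1}}$ and is therefore surjective, hence unitary.

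The remaining and most delicate point is strong continuity. Because $G$ is second countable it is metrizable, so it suffices to argue along sequences, and because the $U_g$ are uniformly bounded (they are isometries) a standard $\eps/3$ argument shows that the set of $f$ for which $g\mapsto U_gf$ is continuous is closed; hence it is enough to treat $f\in C_c(\R^d)$. For such $f$ and $g_k=(a_k,h_k)\to g_0=(a_0,h_0)$, continuity of $\beta$, of $\Phi$, and of the action give $(U_{g_k}f)(x)\to(U_{g_0}f)(x)$ for every $x$. To upgrade this to $L^2$ convergence I would control supports: since $f$ is supported in a compact set $K$ and $(h,x)\mapsto h.x$ is continuous, all the functions $U_{g_k}f$ are supported in a single compact set for $k$ large, while the prefactors $\beta(h_k)^{-1/2}$ stay bounded. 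This yields an $L^2$ dominating function, and dominated convergence gives $\|U_{g_k}f-U_{g_0}f\|_2\to0$.

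The main obstacle is thus not any single identity but the bookkeeping needed to combine \eqref{PHI}, \eqref{contra} and the multiplicativity of $\beta$ in the composition computation, together with the uniform support and boundedness control required to pass from pointwise to $L^2$ convergence in the continuity step.
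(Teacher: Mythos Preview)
Your proof is correct and follows essentially the same approach as the paper: unitarity by the change of variables \eqref{betabeta}, multiplicativity via the key identity $\scal{\Phi(h^{-1}.x)}{a}=\scal{\Phi(x)}{h^\dagger[a]}$ coming from \eqref{PHI} and \eqref{contra}, and strong continuity by reducing to $C_c(\R^d)$ and applying dominated convergence. The only cosmetic difference is that the paper checks multiplicativity by verifying the semidirect-product commutation relation $U_hU_aU_{h^{-1}}=U_{h^\dagger[a]}$ rather than computing $U_{g_1}U_{g_2}$ directly, but the underlying identity and the rest of the argument are the same.
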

\begin{proof}
 Clearly, $U_g$ is a unitary operator and $U$ is a representation of  $\R^n$ and $H$ separately.  In order to prove that it is a representation of $G$, it is enough to show that 
 $U_hU_aU_{h^{-1}}=U_{h^\dagger[a]}$ for $a\in\R^n$ and $h\in H$. For  $f\in L^2(\R^d)$,
and almost every $x\in \R^d$
\begin{align*}
  \left(U_hU_a U_{h^{-1}} f\right)(x) 
& =\beta(h)^{-\frac{1}{2}}  e^{-2\pi i \scal{\Phi(h^{-1}.x)}{a}}\,(U_{h^{-1}} f)(h^{-1}.x) \\
& =  e^{-2\pi i \scal{\Phi(h^{-1}.x)}{a}}\, f(x) = e^{-2\pi i\scal{h^{-1}[\Phi(x)]}{a}}\, f(x) \\
& =  e^{-2\pi i \scal{\Phi(x)}{h^\dagger[a]}}\, f(x) = (U_{h^\dagger[a]} f)(x)
\end{align*}
To show strong continuity, it is enough to prove that $g\mapsto\scal{U_gf_1}{f_2}$ is continuous at the identity  whenever $f_1,f_2$  are continuous functions with compact
support, and this is an easy consequence of the dominated convergence theorem.
\end{proof}

\subsection{Examples} There are many interesting examples of the setup we are considering.
We will focus on some situations in which  most relevant features occur.

\begin{example}\label{fuhr} Let $H$ be a closed subgroup of $GL(d,\R)$ and assume $n=d$. Since the group $H$ acts naturally on $\R^d$, define 
$$
h.x= h[x]=\,^th^{-1}\,x \qquad x\in\R^d,\ h\in H.
$$
Choosing  $\Phi(x)=x$, the representation $U$ is equivalent  to the quasi
 regular representation of $G$ via the Fourier transform. Necessary and sufficient conditions
for $U$ to be reproducing are given in \cite{fuhr05}. It is worth observing that if $n=1$, then $H=\R_+$ and hence $G$ is  the ``$ax+b$'' group, whereby the dilations are parametrized by $H$. In this case $U$ is 
$$
U_{(b,a)}f(x)=\sqrt{a}e^{-2\pi ibx}f(ax)
$$
which,  after conjugation with the Fourier transform, is the usual wavelet  representation. It may be generalized to higher dimension \cite{LWWW}.
\end{example}

\begin{example}\label{scro} 
The Schr\"odinger representation of the  Heisenberg group ${\mathbb H}^1$ may be included in this setup, by regarding ${\mathbb H}^1$ as a closed subgroup of $GL(3,\R)$:
$$
{\mathbb H}^1
=\Bigl\{
\begin{bmatrix}
1&q&t\\
0&1&p\\
0&0&1
\end{bmatrix}
:q,p,t\in{\R}
\Bigr\}.
$$
It is easy to see that  ${\mathbb H}^1$ is isomorphic to the semidirect product $A\rtimes H$, where
$A=\bigl\{\left[\begin{smallmatrix}p\\t\end{smallmatrix}\right]:p,t\in{\mathbb R}\bigr\}$
and
 $H=\bigl\{\left[\begin{smallmatrix}1&0\\q&1\end{smallmatrix}\right]:q\in{\mathbb R}\bigr\}$.
Indeed, the group $H$ has the natural representation on $\R^2$:
$$
q\mapsto \;^t\!\begin{bmatrix}1&0\\q&1\end{bmatrix}^{-1}
=\begin{bmatrix}1&-q\\0&1\end{bmatrix}
$$
and acts on $\R$ via the  translations $q.x=x+q$.
The smooth map $\Phi:\R\to\R^2$ defined by 
$\Phi(x)=\left[\begin{smallmatrix} -x\\\;1\end{smallmatrix}\right]$
satisfies the intertwining property \eqref{PHI}. The mock metaplectic representation takes the form
$$
U_{(q,p,t)}\,f(x)
=
e^{-2\pi i\langle\Phi(x),\left[\begin{smallmatrix} p\\t\end{smallmatrix}\right]\rangle}
\hskip0.1truecmf(q^{-1}.\,x)
=e^{-2\pi i(t-px)}\hskip0.1truecmf(x-q)
$$
and it thus coincides with the Schr\"odinger representation, which is irreducible but notoriously not square integrable (i.e. not reproducing). Notice that $n>d$.
\end{example}

\begin{example}\label{eclass}
This class of examples is where our investigation started. It will be transparent that the mock metaplectic representation is a generalization of the metaplectic representation as restricted to this class of subgroups of $Sp(d,\R)$ which includes all the parabolic subgroups.
 Let $G=\Sigma\rtimes H\subset Sp(d,\R)$  be a subgroup of the form
\begin{equation}
G=\Bigl\{\begin{bmatrix}h & 0 \\\sigma h& ^th^{-1} \end{bmatrix}: h\in
  H,\ \sigma\in \Sigma\Bigr\},
\label{parabolic}
\end{equation}
where $H$ is a closed subgroup of $GL(d,\R)$ and $\Sigma$ is an
$n$-dimensional subspace of
$\text{Sym}(d,\R)$, the space of symmetric $d\times d$ matrices.  We call any such group a triangular subgroup.

Inner conjugation  within $G$ yields the $H$-action on $\Sigma$
\begin{equation}
h^\dagger[\sigma]:={^th}^{-1}\sigma h^{-1}\qquad \sigma\in\Sigma,h\in H,
\label{dagger1}
\end{equation}
under which $\Sigma$ must be invariant. As the notation suggests, \eqref{dagger1} can be seen as a contragredient action.
Indeed, we endow $\text{Sym}(d,\R)$ with  the natural  inner
product $\scal{\sigma_1}{\sigma_2}=\trace(\sigma_1\sigma_2)$,  whose restriction to $\Sigma$ will be denoted $\scal{\cdot}{\cdot}_{\Sigma}$. If $\sigma\mapsto h[\sigma]$ is the representation  whose contragredient version is \eqref{dagger1}, then for $\sigma,\tau\in\Sigma$ we have
$$
\scal{\tau}{h[\sigma]}_\Sigma
=\scal{^t\!h\tau h}{\sigma}_\Sigma
=\trace(\tau h\sigma^t\!h)
=\scal{\tau}{P_\Sigma(h\sigma ^t\!h)}_\Sigma,
$$
where $P_{\Sigma}$ is the orthogonal projection from $\text{Sym}(d,\R)$
onto $\Sigma$. Thus
\begin{equation}
h[\sigma]=P_{\Sigma} (h \sigma\, {^th})\qquad \sigma\in\Sigma,h\in H,
\label{semidirect}
\end{equation}
and if ${^t}H=H$ there is no need of  the projection.

The group $H$ acts naturally on $\R^d$, that is, $h.x=hx$. Given $x\in\R^d$, let $\Phi(x)\in\Sigma$ be defined by
\begin{equation}
 \trace(\Phi(x)\sigma)=-\frac{1}{2}\,\scal{\sigma x}{x}
 \qquad x\in\R^d.
\label{riesz}
\end{equation}
Identifying $\R^n\simeq\widehat{\Sigma}\simeq\Sigma$, we can interpret $\Phi(x)$ either as the linear functional on $\Sigma$ whose action on $\sigma$ is $-\frac{1}{2}\,\scal{\sigma x}{x}$ or as the symmetric matrix associated to it via the usual inner product on symmetric matrices.
Condition~\eqref{PHI} is satisfied, since, upon observing that $\sigma=P_{\Sigma}(\sigma)$ and 
that $P_{\Sigma}$ is self-adjoint,
$$
\trace(\Phi(h.x)\sigma)
=-\frac{1}{2}\,\scal{{^th}\sigma h x}{x}
=\trace(\Phi(x) {^t}h\sigma h)
=\trace(h\Phi(x){^th}\sigma)
= \trace(h[\Phi(x)]\sigma).
$$
The representation~\eqref{mock} is 
\begin{equation}
U_{(\sigma,h)}f(x)=|\det h|^{-1/2}\;e^{\pi i\scal{\sigma x}{x}}f(h^{-1}x)
\label{metaplectic}
\end{equation}
and hence it coincides with the restriction of the metaplectic representation to the group $G$. 
Various properties of  $U$  are analyzed in \cite{codenota06b,codenota06a}.

An important  explicit example in this class  is connected to the theory of shearlets initiated in \cite{gukula06}. Here the group
$G$ parametrizes the  two-dimensional phase-space operations of translation, dilation and shear and is thus sometimes denoted $TDS(2)$. We shall do so and call it the shearlet group.

Precisely, $G=\R^2\rtimes H$ in the following way. 
Fix a parameter $\gamma>0$ (usually $\gamma=1/2$). 
The abelian normal subgroup $\Sigma\simeq\R^2$ consists of the $2\times2$ symmetric matrices
$\left[\begin{smallmatrix}a_1& a_2/2\\ a_2/2&0 \end{smallmatrix}\right]$. The homogeneous group $H$ contains all the $2\times2$ matrices of the form $S_\ell A_t$ where $\ell\in\R$, $t\in\R_+$ and
\[
S_\ell=\begin{bmatrix} 1& 0 \\-\ell &1\end{bmatrix}, \qquad 
A_t =\begin{bmatrix} t^{-\frac{1}{2}}& 0 \\0 &t^{\frac{1}{2}-\gamma}\end{bmatrix}
\]
with Haar measure $dh=t^{\gamma-2} d\ell dt$ and modular
function $\Delta_H(\ell,t)=t^{\gamma-1}$.
 For any $h=(\ell,t)$ the linear action on the abelian normal factor $\R^2$ is
\[
h^\dagger[\cdot]=\begin{bmatrix} 1&\ell\\0&1\end{bmatrix} \begin{bmatrix} t & 0\\0&t^\gamma\end{bmatrix} 
\]
and the group law of $G$ is
\[
(a,\ell,t)(a',\ell',t')=(a+
\left[\begin{smallmatrix}
t& t^{\gamma}\ell \\ 0  &t^{\gamma}
\end{smallmatrix}\right]a',\ell+t^{1-\gamma}\ell',tt').\]
It is easy to see that formula \eqref{riesz} implies that $\Phi(x_1,x_2)=-\frac{1}{2}(x_1^2,x_1x_2)$. The
mock metaplectic representation $U$ restricted to $\Sigma$ is
equivalent to translations and restricted to $\{A_t\}$ it amounts to
dilations, as shown in \cite{codenota06b}, where necessary and
sufficient conditions for admissible vectors are given in the case $\gamma=1$. 
Admissibility conditions are also given in \cite{dakustte09} for $\gamma=1/2$.
Observe that $d=n$. 

\end{example}

\begin{example}\label{n>d} This is a case where $n<d$. Let    $H=\R_+\times {\mathbb T}$. Here
${\mathbb T}$ is the one-dimensional torus, parametrized by $\theta\in [0,2\pi)$, with Haar measure $d\theta/2\pi$, and  $\R_+$ is the multiplicative group with Haar measure $t^{-1}dt$ where   $dt$ is the restriction to $\R_+$ of the Lebesgue measure on the real line. 
Hence $H$ has  Haar measure $dt \,d\theta/2\pi t$  and modular function $\Delta_H(h)=1$.
The representation of $H$  on $\R$ is
$$
h[y]=t^2 y\qquad y\in\R,
$$
where $h=(t,\theta)$. Hence in particular $\alpha(h)=t^{-2}$. The group law in $G=\R\rtimes H$  is
$$ 
(a_1,t_1,\theta_1)(a_2,t_2,\theta_2)= (a_1+t_1^{-2}
a_2,t_1t_2,\theta_1+\theta_2 ). 
$$
The resulting Haar measure is $t dt \,d\theta/2\pi$ and the modular function is easily seen to be 
$\Delta_G(a,t,\theta)=t^2$. The action of $h=(t,\theta)\in H$ on $\R^2$ is given by
$$
h.(x_1,x_2)=t (\cos\theta\, x_1-\sin\theta\, x_2, \sin\theta\,
x_1+\cos\theta\, x_2)\qquad (x_1,x_2)\in\R^2
$$ 
so that $\beta(h)=t^2$. Finally,  $\Phi:\R^2\to \R$ is given by
$\Phi(x_1,x_2)=x_1^2+x_2^2$.
The mock metaplectic representation $U$of $G$ on $L^2(\R^2)$ is
\begin{align*}
U_{(a,t,\theta)}f(x_1,x_2)&= t^{-1}e^{-2\pi i (x_1^2+x_2^2)a}\times\\
&\times f\left(t^{-1} (\cos\theta\, x_1+\sin\theta\, x_2), t^{-1}(-\sin\theta\, 
x_1+\cos\theta\, x_2)\right).
\end{align*}
\end{example}

\begin{example}\label{n<dnocom}
The point of this example, where again $n<d$, will become clearer later, when $H$-stabilizers enter into the picture: this is a case where they are not compact.
Let $H=\R^*\times \R$ where $\R^*$ is the (non-connected) multiplicative group of non-zero real numbers and
$\R$ is the additive group with Haar measures $\lvert t\rvert^{-1} dt$ and
$db$ respectively. The Haar measure of $H$ is $\lvert t\rvert^{-1}dtdb$ and $\Delta_H=1$.  An element $h=(t,b)\in H$ acts on $\R$ and $\R^2$ by
means of 
\begin{align*}
  h[y] & = t y && y\in \R \\
  h.(x_1,x_2)& =(x_1+b,t x_2)  && (x_1,x_2)\in\R^2
\end{align*}
so that $\alpha(h)=\lvert t\rvert^{-1}$ and $\beta(h)=\lvert
t\rvert$. Finally $\Phi:\R^2\to\R$ is defined by $\Phi(x_1,x_2)=x_2$,
which clearly satisfies~\eqref{PHI}.  
\end{example}


\section{Main results}\label{mainresults}

\subsection{Dimensional constraints}
Our first result, Theorem~\ref{nleqd}, states that if $G$ is reproducing, then $n\leq d$. The interpretation of this statement in the case of wavelets  is that the dimension of the space of translations cannot exceed that of the ``ground'' space.  In order to prove the theorem we need a technical lemma, in the proof of which we use a standard result in harmonic analysis on locally compact abelian groups (see Theorem~(31.33) in \cite{HeRossII79}).  This is the fact that if a bounded measure $\nu$ on the locally compact abelian group $\cG$ has Fourier transform that coincides almost everywhere (on the character group $\widehat{\cG}$) with the Fourier transform of an $L^p(\cG)$-function $F$, with $1\leq p\leq 2$, then $F\in L^1(\cG)$, $\nu$ is absolutely continuous with respect to Haar measure and its Radon-Nikodym derivative is $F$. We apply this to a bounded measure on $\R^n$.

\begin{lemma}\label{basic} For any  $f,\eta\in L^2(\R^d)$ the following facts are equivalent:
\begin{itemize}
\item[(i)] $\int_G |\scal{f}{U_g\eta}|^2\, dg<+\infty$;
\item[(ii)] for almost every $h\in H$ the bounded measure on $\R^n$ 
\begin{equation}
\Omega_h(E)=\int_{\Phi^{-1}(E)}
  f(x)\overline{\eta(h^{-1}.x)}\, dx,
  \qquad
E\in\cB(\R^n),
\label{Omega}
\end{equation}
has a density $\omega_{h}\in L^2(\R^n)$
for which
\begin{equation}
\int_H\left(\int_{\R^n}|\omega_{h}(y)|^2\,dy\right)\,\frac{dh}{\alpha(h)\beta(h)}<+\infty.
\label{finitenorm}
\end{equation}
\end{itemize}
Under the above circumstances
\begin{equation}
\int_G |\scal{f}{U_g\eta}|^2\, dg
=\int_H\left(\int_{\R^n}|\omega_{h}(y)|^2\,dy\right)\,\frac{dh}{\alpha(h)\beta(h)}.
\label{2norm}
\end{equation}
\end{lemma}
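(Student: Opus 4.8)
The plan is to make the matrix coefficient $g\mapsto\scal{f}{U_g\eta}$ explicit, recognize the integral over the normal factor as a Fourier transform of the measure $\Omega_h$, and then convert the resulting $L^2$-integrability into an absolute-continuity statement via Plancherel and the quoted Hewitt--Ross theorem. First I would fix $g=(a,h)$ and read off from \eqref{mock} that
\begin{equation*}
\scal{f}{U_g\eta}=\beta(h)^{-\frac12}\int_{\R^d} F_h(x)\,e^{2\pi i\scal{\Phi(x)}{a}}\,dx,\qquad F_h(x):=f(x)\,\overline{\eta(h^{-1}.x)}.
\end{equation*}
By Cauchy--Schwarz together with the change of variables \eqref{betabeta} one gets $\|F_h\|_1\le\beta(h)^{1/2}\|f\|_2\|\eta\|_2<+\infty$ for every $h$, so that $F_h\,dx$ is a finite complex measure and $\Omega_h$ in \eqref{Omega} is genuinely its push-forward under $\Phi$, hence a bounded measure on $\R^n$. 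Substituting $y=\Phi(x)$ then identifies the displayed integral \emph{exactly} with a Fourier transform of $\Omega_h$: writing $\widehat{\Omega_h}(a)=\int_{\R^n}e^{-2\pi i\scal{a}{y}}\,d\Omega_h(y)$, one finds $\scal{f}{U_g\eta}=\beta(h)^{-1/2}\,\widehat{\Omega_h}(-a)$.

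Next I would integrate $|\scal{f}{U_g\eta}|^2$ over $G$. The strong continuity of $U$ (Proposition~\ref{mock_prop}) makes the integrand continuous, hence jointly measurable and nonnegative, so Tonelli applies to the Haar measure decomposition \eqref{eq:1}; after the harmless substitution $a\mapsto-a$ in the inner integral this yields, in $[0,+\infty]$ and with no finiteness assumed,
\begin{equation*}
\int_G|\scal{f}{U_g\eta}|^2\,dg=\int_H\Bigl(\int_{\R^n}|\widehat{\Omega_h}(a)|^2\,da\Bigr)\,\frac{dh}{\alpha(h)\beta(h)}.
\end{equation*}
The entire lemma is thereby reduced to a pointwise-in-$h$ assertion: the inner integral $\int_{\R^n}|\widehat{\Omega_h}|^2\,da$ is finite \emph{if and only if} $\Omega_h$ has a density $\omega_h\in L^2(\R^n)$, in which case the two quantities coincide.

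This equivalence is the heart of the argument and the step I expect to be the main obstacle. The easy direction is immediate: if $\Omega_h$ has density $\omega_h\in L^2$, then $\omega_h\in L^1$ as well (its modulus is the density of the bounded total variation $|\Omega_h|$), so the measure transform $\widehat{\Omega_h}$ equals $\cF\omega_h$, and Plancherel gives $\int|\widehat{\Omega_h}|^2=\int|\omega_h|^2$. For the converse one assumes $\widehat{\Omega_h}\in L^2(\R^n)$ and sets $F=\cF^{-1}\widehat{\Omega_h}\in L^2(\R^n)$; then $\widehat{\Omega_h}=\cF F$ agrees everywhere with the Fourier transform of an $L^2$-function, and the quoted Hewitt--Ross theorem (Theorem~(31.33) of \cite{HeRossII79}, applied with $p=2$ and $\cG=\R^n$) forces $F\in L^1$ and $\Omega_h$ to be absolutely continuous with Radon--Nikodym derivative $\omega_h=F\in L^2$, the equality of norms following again from Plancherel. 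The delicate point here is exactly why the identification of the first paragraph must be an exact equality of the \emph{measure} Fourier transform rather than a formal one, since Hewitt--Ross is a statement about bounded measures.

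Finally I would assemble the two directions. As $\alpha,\beta>0$, the measure $\tfrac{dh}{\alpha(h)\beta(h)}$ is equivalent to Haar measure on $H$, so finiteness of the right-hand side of the displayed identity forces the inner integral to be finite for almost every $h$; by the equivalence this is precisely the existence of the densities $\omega_h\in L^2(\R^n)$, and replacing $\int|\widehat{\Omega_h}|^2$ by $\int|\omega_h|^2$ turns the displayed identity into \eqref{2norm} and the finiteness condition into \eqref{finitenorm}. Running the implication in both directions establishes (i)$\Leftrightarrow$(ii) and the concluding formula simultaneously.
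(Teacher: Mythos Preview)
Your proof is correct and follows essentially the same route as the paper: express the matrix coefficient as the (inverse) Fourier transform of the push-forward measure $\Omega_h$, split the $G$-integral via $dg=\alpha(h)^{-1}\,da\,dh$, and invoke Theorem~(31.33) of \cite{HeRossII79} together with Plancherel for the pointwise-in-$h$ equivalence. Your write-up is in fact slightly tidier---using Tonelli to obtain the displayed identity in $[0,+\infty]$ unconditionally before splitting into the two implications---but the mathematical content is identical.
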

\begin{proof}
Observe that $\Omega_{h}$ is the image measure,  induced by $\Phi$, of the bounded measure with density $f\overline{\eta^{h}}\in L^1(\R^d)$ with respect to $dx$ (see e.g. Sec.~39 in \cite{Halmos50}).   Since $\Omega_{h}$ is bounded, the basic integration formula for image measures, (see Theorem~C, p.161 in \cite{Halmos50}) and~\eqref{mock}  imply that
\begin{align*}
\scal{f}{U_{(a,h)}\eta}
&=\beta^{-\frac{1}{2}}(h)\int_{\R^d} e^{2\pi i\scal{\Phi(x)}{a}}f(x)\overline{\eta^{h}}(x)\,dx \\
&=\beta^{-\frac{1}{2}}(h) \int_{\R^n} e^{2\pi i \scal{y}{a}} \,d \Omega_{h}(y).
\end{align*}
Assume that  $ \int_G |\scal{f}{U_g\eta}|^2\, dg<\infty$. Since $dg=\frac{da\,dh}{\alpha(h)}$, Fubini's theorem implies that, for almost every $h\in H$, 
$$
\int_{\R^n}|{\scal{f}{U_{(a,h)}\eta}|^2\,da
=\beta(h)^{-1}\int_{\R^n} | \int_{\R^n} e^{2\pi i \scal{y}{a}} \,d\Omega_{h}(y)}|^2\,da<+\infty.
  $$
This says that the inverse Fourier transform of $\Omega_{h}$ is in $L^2(\R^n)$, and the aforementioned Theorem~(31.33) in \cite{HeRossII79} ensures that the latter condition is equivalent to saying that 
$\Omega_{h}$ has  an $L^2(\R^n)$-density $\omega_{h}$ with respect to $dy$. Furthermore, by Plancherel's theorem
$$
\int_{\R^n} |\int_{\R^n} e^{2\pi i \scal{y}{a}} \,d\Omega_{h}(y)|^2\,da
=\int_{\R^n} |\omega_{h}(y)|^2\,dy.
$$
Applying again Fubini's theorem, \eqref{2norm} follows and hence \eqref{finitenorm} holds. Therefore (i) implies (ii). The converse statement is shown by applying the same argument backwards.
\end{proof}
We are now in a position to state our first result.
\begin{theorem}\label{nleqd}
If  $U$ is a reproducing representation, then the image under $\Phi$ of any Borel subset of $\R^d$ with positive measure has positive measure.  Hence
\begin{itemize}
\item[(i)] $n\leq d$;
\item[(ii)] the set $\cC$ of critical points\footnote{A  point $x\in\R^d$ is critical for $\Phi:\R^d\to\R^n$ if the rank of the differential map $\Phi_{*x}$ is less than $n$.} of $\Phi$ has measure zero.
\end{itemize}
\end{theorem}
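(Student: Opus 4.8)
The plan is to prove the measure-preservation statement first and then deduce (i) and (ii) from it. Everything hinges on Lemma~\ref{basic}. Since $U$ is reproducing with some admissible vector $\eta$, for every $f\in L^2(\R^d)$ we have $\int_G|\scal{f}{U_g\eta}|^2\,dg=\|f\|^2<+\infty$, so hypothesis (i) of that lemma is met and consequently (ii) together with the identity \eqref{2norm} hold as well. I would argue by contradiction: suppose there were a Borel set $B\subseteq\R^d$ with $0<|B|<+\infty$ (intersect with a large ball if necessary) whose image $\Phi(B)$ is Lebesgue-null in $\R^n$, noting that $\Phi(B)$ is analytic and hence measurable. I would then test the reproducing identity on $f=\chi_B$, for which $\|f\|^2=|B|>0$.

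The crux is a concentration observation about the measures $\Omega_h$ of \eqref{Omega}. For $f=\chi_B$ one has $\Phi^{-1}(E)\cap B=\Phi^{-1}(E\cap\Phi(B))\cap B$, because $B\subseteq\Phi^{-1}(\Phi(B))$, so $\Omega_h(E)=\Omega_h(E\cap\Phi(B))$ for every $E\in\cB(\R^n)$; that is, each $\Omega_h$ is concentrated on the null set $\Phi(B)$. By Lemma~\ref{basic}(ii), for almost every $h$ the measure $\Omega_h$ admits an $L^2(\R^n)$-density $\omega_h$, but a measure that is simultaneously absolutely continuous and concentrated on a Lebesgue-null set must vanish, so $\omega_h=0$ almost everywhere for almost every such $h$. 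Feeding this into \eqref{2norm} gives $\int_G|\scal{\chi_B}{U_g\eta}|^2\,dg=0$, contradicting $\|\chi_B\|^2=|B|>0$. Hence no such $B$ exists, which is the measure-preservation claim. I expect this to be the main point: the reproducing property forces every push-forward $\Omega_h$ to be absolutely continuous, and this is incompatible with a null image. The only routine care needed is the measurability of $\Phi(B)$ and the legitimacy of invoking \eqref{2norm}.

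Assertions (i) and (ii) then follow. For (i), if $n>d$ then $\Phi$, being $C^1$ and hence locally Lipschitz, maps $\R^d$ into a set of Hausdorff dimension at most $d<n$, so $\Phi(\R^d)$ is Lebesgue-null in $\R^n$; applying measure preservation to any bounded $B$ with $|B|>0$ would give $|\Phi(B)|>0$ while $\Phi(B)\subseteq\Phi(\R^d)$ is null, a contradiction, whence $n\leq d$. For (ii), knowing now that $n\leq d$, the critical set $\cC=\{x:\operatorname{rank}\Phi_{*x}<n\}$ is closed, hence $\sigma$-compact, and $\Phi(\cC)$ is measurable; Sard's theorem shows that the set of critical values $\Phi(\cC)$ has Lebesgue measure zero. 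By the contrapositive of measure preservation—a measurable set whose image is null must itself be null—the critical set $\cC$ has Lebesgue measure zero in $\R^d$.
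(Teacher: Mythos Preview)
Your argument is correct and follows essentially the same route as the paper's proof: test the reproducing identity on the indicator of a set with positive measure but null $\Phi$-image, observe that each $\Omega_h$ is concentrated on that null image and hence (being absolutely continuous by Lemma~\ref{basic}) must vanish, and derive the contradiction via \eqref{2norm}; parts (i) and (ii) are then deduced exactly as in the paper via the null image of a $C^1$ map into higher dimension and Sard's theorem, respectively. The only cosmetic difference is that the paper passes to a compact subset $K\subset B$ by inner regularity so that $\Phi(K)$ is automatically Borel, whereas you keep $B$ Borel and invoke analyticity of $\Phi(B)$ for measurability; both devices serve the same purpose.
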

\begin{proof}
By contradiction, suppose that there exists a Borel subset $A$ of $\R^d$ with positive measure such that $\Phi(A)$ is negligible. Since $|A|_d>0$ and the Lebesgue measure is regular, there exists a compact subset $K\subset A$ with  $|K|_d>0$. Clearly, 
 $\Phi(K)$ is also compact, but $|\Phi(K)|_n=0$. Take an admissible vector $\eta$ for $U$. The reproducing formula for $f=\chi_{K}$ and~\eqref{2norm} imply that
$$
0<|K|_d
=\int_H\left(\int_{\R^n}|\omega_{h}(y)|^2\,dy\right)\,\frac{dh}{\alpha(h)\beta(h)},
$$
so that, on a subset of $H$ of positive Haar measure we have $\omega_{h}\neq 0$.  Take then $h\in H$ such that  
$\Omega_{h}=\omega_{h} dy\neq0$. Now, if ${E}$ is a Borel subset of $\R^n$, the
definition of $\Omega_{h}$ gives
$$
\Omega_{h}({E})=\Omega_{h}({E}\cap\Phi(K))=\int_{{E}\cap\Phi(K)}\omega_{h}(y) dy=0
$$
because $|\Phi(K)|_n=0$. Hence $\Omega_{h}=0$, a contradiction.

To show (i), assume that $n>d$ and apply the above result to $A=\R^d$. Since $\Phi$ is of class $C^1$ we have $|\Phi(A)|_n=0$, so that $U$ cannot be reproducing. 

To show (ii),  denote by $\cC$ the set  of critical points of $\Phi$. Sard's theorem \cite{st64}
 implies that $\Phi(\cC)$ has measure zero. But then, by (i), also $\cC$ has measure zero. 
\end{proof}

\subsection{Measures concentrated on the preimages under $\Phi$}
Given any $x\in\R^d$, let  $J(\Phi)(x)=\sqrt{\det(\Phi_{*x}\cdot {^t}\Phi_{*x})}$ be the Jacobian of  $\Phi$ at $x$ and denote by $\cR$ the set of regular points of $\Phi$, namely
$$
\cR=\left\{x\in \R^d: J(\Phi)(x)>0\right\}=\R^d\setminus\cC.
$$

\begin{lemma}\label{regular} The set $\cR$  satisfies the following properties:
\begin{itemize}
\item[(i)] it is open;
\item[(ii)] it is $H$-invariant and has $H$-invariant image under $\Phi$;
\item[(iii)] the restriction of $\Phi$ to it is an open mapping;
\item[(iv)] for every $y$ in its image,  $\Phi^{-1}(y)$ is a Riemannian submanifold of $\R^d$;
\item[(v)] a subset $E\subset \Phi(\cR)$ is negligible if and only
if $\Phi^{-1}(E)$ is negligible.
\end{itemize}
\end{lemma}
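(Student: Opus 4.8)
The plan is to prove the five assertions in the order given, throughout regarding $\Phi$ as restricted to the open set $\cR$, so that every preimage $\Phi^{-1}(y)$ or $\Phi^{-1}(E)$ is understood inside $\cR$. For (i) I would note that $x\mapsto\Phi_{*x}$ is continuous because $\Phi$ is of class $C^1$, hence so is $x\mapsto J(\Phi)(x)=\sqrt{\det(\Phi_{*x}\,{^t}\Phi_{*x})}$, and therefore $\cR=\{J(\Phi)>0\}$ is open. For (ii) I would differentiate the intertwining relation \eqref{PHI}: writing $L_h\colon x\mapsto h.x$ and $M_h\colon y\mapsto h[y]$, the identity $\Phi\circ L_h=M_h\circ\Phi$ together with the chain rule yields $\Phi_{*(h.x)}\circ (L_h)_{*x}=M_h\circ\Phi_{*x}$, the right-hand side using that $M_h$ is linear. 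Since $L_h$ is a diffeomorphism, $(L_h)_{*x}$ is invertible, and $M_h\in GL(n,\R)$ because $h\mapsto M_h$ is a representation; hence $\Phi_{*(h.x)}$ and $\Phi_{*x}$ have the same rank, so $h.x\in\cR\iff x\in\cR$. The invariance of the image then follows from $M_h(\Phi(\cR))=\Phi(L_h(\cR))=\Phi(\cR)$.

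For (iii) and (iv) the tool is the submersion theorem. At every $x_0\in\cR$ the differential $\Phi_{*x_0}$ is surjective, so by the implicit function theorem there are $C^1$ coordinates near $x_0$ in which $\Phi$ becomes the coordinate projection $\pi\colon\R^n\times\R^{d-n}\to\R^n$. Because $\pi$ is open and diffeomorphisms are open, $\Phi|_{\cR}$ is open, which gives (iii). In the same coordinates the fibre through $x_0$ is the slice $\{y\}\times\R^{d-n}$, so $\Phi^{-1}(y)$ is an embedded $(d-n)$-dimensional submanifold of the open set $\cR\subset\R^d$, and as such it carries the induced Euclidean metric; this is (iv). Equivalently, every $y\in\Phi(\cR)$ is a regular value of $\Phi|_{\cR}$.

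The substance of the lemma is (v), and here I would again exploit the projection charts. As $\cR$ is second countable I may cover it by countably many open sets $V_k$ on each of which a diffeomorphism $\psi_k\colon V_k\to W_k$ conjugates $\Phi$ to $\pi$. If $|E|_n=0$, then $\pi^{-1}(E)=E\times\R^{d-n}$ is $d$-negligible by Fubini, hence so is $\psi_k^{-1}(\pi^{-1}(E))=\Phi^{-1}(E)\cap V_k$; summing over $k$ shows $\Phi^{-1}(E)$ is negligible. For the converse I would argue by contradiction: if $|E|_n>0$, then, since $\Phi(\cR)$ is the countable union of the open sets $\pi(W_k)$, some $\pi(W_k)$ meets $E$ in a set $E'$ of positive measure. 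For $y\in E'$ the slice $W_k^y=\{v:(y,v)\in W_k\}$ is a nonempty open subset of $\R^{d-n}$, so $|W_k^y|_{d-n}>0$, and Fubini gives $|(E'\times\R^{d-n})\cap W_k|_d=\int_{E'}|W_k^y|_{d-n}\,dy>0$; transporting this set back by $\psi_k^{-1}$ produces a positive-measure subset of $\Phi^{-1}(E)$, contradicting its negligibility.

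I expect the converse half of (v) to be the only genuine obstacle: parts (i)--(iv) and the forward implication are immediate from continuity of the Jacobian and the submersion normal form, whereas the converse requires combining the countable family of charts with the \emph{openness} of the images $W_k$ in order to turn a positive-measure set downstairs into one upstairs. An alternative, should one be willing to invoke it at this stage, is the coarea formula: since $J(\Phi)>0$ on $\cR$, the identity expressing $\int_{\Phi^{-1}(E)}J(\Phi)\,dx$ as the integral over $E$ of the $(d-n)$-dimensional Hausdorff measure of the fibres makes both implications transparent, because each fibre $\Phi^{-1}(y)$ with $y\in\Phi(\cR)$ is a nonempty $(d-n)$-manifold and hence has strictly positive $(d-n)$-dimensional measure.
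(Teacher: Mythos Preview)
Your proof is correct and follows essentially the same route as the paper: parts (i)--(iv) via continuity of the Jacobian and the submersion normal form, and (v) via countably many projection charts together with Fubini. The only cosmetic difference is that the paper chooses its local charts so that the \emph{domain} is already a product $U_i\times V_i$ with $\Phi\circ\Psi_i(z,y)=y$; then $\Psi_i^{-1}\bigl(\Phi^{-1}(E)\cap W_i\bigr)=U_i\times(E\cap V_i)$ exactly, and both implications of (v) collapse to the single observation that a product with a nonempty open factor is null if and only if the other factor is. Your version, with charts whose codomain $W_k$ is a general open set, forces the extra slicing step $\int_{E'}|W_k^y|_{d-n}\,dy>0$ for the converse, but the idea is identical. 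Your coarea-formula alternative is also legitimate in this paper's logical order, since Theorem~\ref{coar-form} in the appendix is proved independently of Lemma~\ref{regular}; the authors simply chose the more elementary chart argument here.
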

\begin{proof}
(i) Since $\Phi$ has continuous derivatives,  $\cR$ is an open
set. (ii) The $H$-invariance follows from
\begin{equation}
\Phi_{*h.x}(h_*.v)=h[\Phi_{*x}v],
\qquad
x,v\in\R^d,
\label{Hinvariance}
\end{equation}
where $h_*$ denotes the differential of the action $x\mapsto h.x$ and is therefore linear.
Indeed,  \eqref{Hinvariance} and the fact that $u\mapsto h[u]$ is a linear isomorphism,  show that $v\in\ker\Phi_{*x}$ if and only $h_*.v\in\ker\Phi_{*h.x}$, so that $\dim\ker\Phi_{*x}=\dim\ker\Phi_{*h.x}$. Since  $x\in \cR$ if and only if $\dim\ker\Phi_{*x}=d-n$, the claim follows. To prove \eqref{Hinvariance}, fix $x\in\R^d$, a tangent vector $v\in T_x(\R^d)\simeq\R^d$ and a smooth curve $v(t)$ passing through $x$ at time zero with tangent vector $v$. Evidently, $h.v(t)$ is smooth and has tangent $h_*.v$ at time zero. By \eqref{PHI} and again by the linearity of $u\mapsto h[u]$
\begin{align*}
\Phi_{*h.x}(h_*.v)&=\frac{d}{dt}\Phi(h.v(t))\Bigr|_{t=0} 
=\frac{d}{dt}h[\Phi(v(t))]\Bigr|_{t=0} \\
&=h[\frac{d}{dt}\Phi(v(t))\Bigr|_{t=0}]
=h[\Phi_{*x}v],
\end{align*}
as desired.

Finally, (iii)
and (iv) are standard consequences of  the fact that, by definition of
$J(\Phi)$, the differential $\Phi_{*x}$ is surjective whenever
$x\in\cR$.\\
In order to prove (v), put  $X=\cR$ and $Y=\Phi(\cR)$.  Since $\Phi$ is a submersion from $X$ 
onto $Y$ and since $X$ is locally compact second
  countable space, there exists a countable family of diffeomorphisms
  $\Psi_i:U_i\times V_i\to W_i$ such that $\{W_i\}$ is an open
  covering of $X$, $\{V_i\}$ is an open
  covering of $Y$, $\{U_i\}$ is a family of  open sets of $\R^{d-n}$
  and
\begin{equation}
\Phi(\Psi_i(z,y))=y \qquad (z,y)\in U_i\times V_i.\label{projection}
\end{equation}
Assume that $E$ is a Borel subset of $Y$, then $\lvert\Phi^{-1}(E)\rvert_d=0$
 if and only if $\lvert\Phi^{-1}(E)\cap W_i\rvert_d=0$  for
all $i$.  Since $\Psi_i$ is a diffeomorphism, by the chain rule this is equivalent to 
$\lvert\Psi_i^{-1}(\Phi^{-1}(E)\cap W_i)\rvert_d=0$, that
is, by~\eqref{projection}, $U_i\times (E\cap V_i)$ is a negligible set
of $\R^{d-n}\times \R^n$. Since $U_i$ is an open non-void set, the last condition is equivalent to 
$\lvert E\cap V_i\rvert_n=0$, that is, to $\lvert E\rvert_n=0$.
\end{proof}

\vskip0.2truecm
\noindent
{\sc Assumption 1.}  Motivated by  Theorem~\ref{nleqd}, in the
following we assume that $\cC$ has Lebesgue
measure zero. In particular, we assume that $n\le d$. Furthermore, we
fix an open $H$-invariant subset $X$ of $\cR$ whose complement also
has measure zero and we denote by $Y$ its image under $\Phi$, namely
$Y=\Phi(X)$. Clearly,  $X$ satisfies the properties (i)--(v)
described in Lemma~\ref{regular} and so its complement is negligible.
\vskip0.2truecm

The next results are based on several kinds of disintegration formulae and their covariance properties with respect to the $H$-action. In Section~\ref{DIS} we review the general theory of disintegration of measures and introduce the pertinent notation. As for the induced $H$-action on measures, and the resulting covariance properties, we recall that,  if $\nu$ is a  measure on $X$ and $h\in H$,  $\nu^h$ is the  measure given by $\nu^h(E)=\nu(h.E)$ whenever  $E\in\cB(X)$. Equivalently,
\begin{equation}
  \label{shiftmeasure}
  \int_X \varphi(x)\,d\nu^h(x)=\int_X \varphi(h^{-1}.x)\,d\nu(x)
\end{equation}
for every $\varphi\in C_c(X)$.
The first disintegration we discuss arises from the Coarea Formula for submersions. 
\begin{theorem}\label{corcoar} There exists a unique family $\{\nu_ y\}$ of  measures on $X$, labeled by the points of $Y$, with  the following properties: 
\begin{itemize}
\item[(i)]   $\nu_y$  is concentrated on  $\Phi^{-1}( y)$ for all $y\in Y$;
\item[(ii)]  $dx=\int_{Y}\nu_{y}dy$;
\item[(iii)] for any $\varphi\in C_c(X)$ the map
$y\mapsto \int_X \varphi(x)\, d\nu_{y}(x)\in\C$
is continuous.
\end{itemize}
Furthermore, 
\begin{equation}
    \label{alfabeta}
    \nu_{h[y]}^h= \alpha(h)\beta(h)\,\nu_{y}
  \end{equation}
  for all $h\in H$ and all $y\in Y$.
\end{theorem}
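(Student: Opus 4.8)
The plan is to build the family $\{\nu_y\}$ explicitly from the coarea formula, to secure its uniqueness from the general disintegration theory recalled in Appendix~\ref{DIS}, and then to extract the covariance~\eqref{alfabeta} by playing this uniqueness against the transformation laws of Lebesgue measure under the two actions. The two factors $\alpha(h)$ and $\beta(h)$ will appear as the Jacobians of $y\mapsto h[y]$ on $Y$ and of $x\mapsto h.x$ on $X$, respectively.

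For existence, I would set $d\nu_y = J(\Phi)^{-1}\,d\sigma_y$, where $\sigma_y$ is the $(d-n)$-dimensional Hausdorff measure on the fiber $\Phi^{-1}(y)$, equivalently the Riemannian volume of the submanifold $\Phi^{-1}(y)$, which is well defined by Lemma~\ref{regular}(iv). Since $J(\Phi)>0$ on $X\subseteq\cR$, this is a genuine measure concentrated on $\Phi^{-1}(y)$, giving~(i); property~(ii) is then precisely the coarea formula $\int_X\varphi\,dx=\int_Y\bigl(\int_{\Phi^{-1}(y)}J(\Phi)^{-1}\varphi\,d\sigma_y\bigr)dy$ specialised to $\varphi\in C_c(X)$. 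For the continuity in~(iii) I would pass to the local charts $\Psi_i\colon U_i\times V_i\to W_i$ constructed in the proof of Lemma~\ref{regular}(v): there the slice $\Phi^{-1}(y)\cap W_i$ is $\Psi_i(U_i\times\{y\})$ and the fibre integral becomes an ordinary integral over $U_i\subseteq\R^{d-n}$ of a jointly continuous, compactly supported integrand, so continuity in $y$ follows by dominated convergence and a partition of unity glues the local statements.

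Uniqueness follows from the disintegration theorem of Appendix~\ref{DIS}: because $\Phi$ is a submersion of $X$ onto the open set $Y$, any family obeying~(i) and~(ii) is a disintegration of $dx$ along $\Phi$ relative to $dy$ and is thus determined for almost every $y$. Condition~(iii) upgrades this to equality at every $y\in Y$, since two continuous functions on $Y$ that agree Lebesgue-almost everywhere agree everywhere.

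For the covariance I would fix $h\in H$, introduce the auxiliary family $\mu_y:=(\alpha(h)\beta(h))^{-1}\,\nu^h_{h[y]}$, verify that it again satisfies~(i)--(iii), and invoke uniqueness to conclude $\mu_y=\nu_y$, which is exactly~\eqref{alfabeta}. Property~(i) is immediate because $\nu^h_{h[y]}$ is concentrated on $\Phi^{-1}(h^{-1}[h[y]])=\Phi^{-1}(y)$ by the intertwining~\eqref{PHI}, and~(iii) holds since $y\mapsto h[y]$ is continuous and $\varphi(h^{-1}.\,\cdot)\in C_c(X)$. The crux is~(ii): unwinding $\mu_y$ through~\eqref{shiftmeasure}, substituting $y'=h[y]$ (whose inverse has Jacobian $\alpha(h)$, so $dy=\alpha(h)\,dy'$ and $Y$ is preserved by Lemma~\ref{regular}(ii)), applying the disintegration~(ii) for $\{\nu_y\}$, and finally using $\int_X\varphi(h^{-1}.x)\,dx=\beta(h)\int_X\varphi\,dx$ from~\eqref{betabeta}, one finds that the factors $\alpha(h)$ and $\beta(h)$ emerge and cancel the normalising constant, leaving $\int_Y(\int_X\varphi\,d\mu_y)\,dy=\int_X\varphi\,dx$. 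I expect this two-Jacobian bookkeeping to be the one point demanding care; the rest is routine once the coarea formula and the uniqueness of disintegrations are available.
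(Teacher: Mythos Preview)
Your proposal is correct and follows essentially the same route as the paper: both construct $\nu_y$ via the coarea formula~\eqref{eq:20}, obtain continuity~(iii) by passing to the local trivialisations of the submersion and applying dominated convergence, and derive~\eqref{alfabeta} by checking that the translated family $(\alpha(h)\beta(h))^{-1}\nu^h_{h[y]}$ again disintegrates $dx$ and then invoking uniqueness together with~(iii) to upgrade the resulting almost-everywhere equality to a pointwise one. The only cosmetic difference is that the paper handles the case $y_0\notin\Phi(\supp\varphi)$ directly rather than via a partition of unity, and it runs the covariance computation in the forward direction (showing $\int_Y\nu^h_{h[y]}\,dy=\alpha(h)\beta(h)\,dx$) rather than pre-normalising as you do.
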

\begin{proof}
The proof is based on the classical Coarea Formula. In  Section~\ref{CF} we give a short proof adapted to the situation at hand and we introduce the notation used  in this proof. The reader is thus referred to Theorem~\ref{coar-form} below.

For every $y\in Y$, define $\nu_y$ by  \eqref{eq:20}.  Property (i) is then  obvious and (ii) is  the content of  Theorem~\ref{coar-form}.
 
To prove (iii), fix $\varphi\in C_c(X)$ and $y_0\in Y$. If $y_0\not\in\Phi(\supp{\varphi})$, there is an open neighborhood $V$ of $y_0$ such that $V\cap \Phi(\supp{\varphi})=\emptyset$. Thus 
$\int_X \varphi(x)\, d\nu_{y}(x)=0$ for all $y\in V$ because $\nu_y$ is concentrated on $\Phi^{-1}(y)$.
If $y_0\in\Phi(\supp{\varphi})$, taking a finite covering if necessary, we can
always assume that there exists a diffeomorphism $\Psi:U\times V \mapsto W$ such that
\eqref{eq:10} holds, where $U$ is an open subset of $\R^{d-n}$,  $ V$ is an open neighborhood  of $y_0$ and $W$ is an open subset of $X$ containing $\supp{\varphi}$. The definition of $\nu_y$ gives
$$
\int_X \varphi(x)\, d\nu_{y}(x)=\int_{U} \varphi(\Psi(z,y)) (J\Psi)(z,y)\,dz
$$
and the map 
$y\mapsto\int_{U} \varphi(\Psi(z,y)) (J\Psi)(z,y)\,dz$
is continuous on $V$ by the dominated convergence theorem. 

In order to show~\eqref{alfabeta}, fix $h\in H$. 
Since the action of $H$ on $X$ is continuous,  $\{\nu^h_{h[y]}\}_{y\in Y}$ is a family of
measures on $X$ and each of them is concentrated on $\Phi^{-1}(y)$,
as shown by
\[
\nu^h_{h[y]}(X\setminus\Phi^{-1}(y))=\nu_{h[y]}(X \setminus
\Phi^{-1}(h[y]))=0,
\]
where the last equality is due to (i). Furthermore the family  $\{\nu^h_{h[y]}\}_{y\in Y}$ is scalarly integrable with respect to
$dy$ because for all $\varphi\in C_c(X)$
\begin{align*}
 \int_Y \bigl(\int_X \varphi(x)\,d\nu_{h[y]}^h(x) \bigr)dy & = \int_Y
 \bigl(\int_X \varphi(h^{-1}.x)\,d\nu_{h[y]}(x) \bigr)dy  \\
(~y\mapsto h^{-1}[y]~) & = \alpha(h) \int_Y
 \bigl(\int_X \varphi(h^{-1}.x)\,d\nu_{y}(x) \bigr)dy  \\
& = \alpha(h) \int_X \varphi(h^{-1}.x)\,dx \\
(~x\mapsto h.x~)  & =\alpha(h)\beta(h) \int_X \varphi(x)\,dx,
\end{align*}
where the third line  follows from (ii). Hence
\[
dx=\int_X \alpha(h^{-1})\beta(h^{-1}) \nu_{h[y]}^h\,dy
\] 
and (iv) of Theorem~\ref{th:2} implies that for almost all $y\in Y$~\eqref{alfabeta}
holds true. Item (iii) tells us that for any fixed $\varphi\in C_c(X)$, the mappings
$y\mapsto\int_X\varphi(x)\,d\nu_y(x)$ and $y\mapsto\int_X\varphi(x)\,d\nu^h_{h[y]}(x)$
are continuous and hence the almost everywhere equality is really an equality.
\end{proof}

In view of the previous result, we may apply the theory developed in Section~\ref{DI}.
In particular we obtain  \eqref{L2} in the case in which $\omega$  and
$\rho$ are the Lebesgue measures:  
\begin{equation}
L^2(X)=\int_Y L^2(X,\nu_y)dy,
\qquad
f=\int_Y f_y dy.
\label{Leb2}
\end{equation}
Here the equalities must be interpreted in $M(X)$ and the second integral is a scalar integral relative to the duality of $M(X)$ and $C_c(X)$. For a discussion of the details see the Appendix, where it is also explained that in particular
\begin{equation}
  \label{coareB}
  \|f\|^2=  \int_{Y} \|f_y\|_{\nu_y}^2\, dy.
\end{equation}

One of the reasons for introducing the measures $\{\nu_y\}$ is because, via the coarea formula, they provide a very useful description of the density $\omega_h$ discussed in Lemma~\ref{basic}. 
\begin{corollary}\label{density}
Given $f,\eta\in L^2(X)$, the function
$
y\mapsto
\scal{f_y}{\eta_y^h}_{\nu_y}
$
coincides almost everywhere with the density $\omega_h$ of the measure  $\Omega_h$ defined by \eqref{Omega}.
\end{corollary}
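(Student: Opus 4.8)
The plan is to exhibit $y\mapsto\scal{f_y}{\eta_y^h}_{\nu_y}$ directly as a density for $\Omega_h$: I will check that this function is integrable and that its integral over an arbitrary Borel set $E$ reproduces $\Omega_h(E)$, after which uniqueness of the Radon--Nikodym derivative forces it to agree almost everywhere with $\omega_h$. No admissibility hypothesis is needed, only that $f,\eta\in L^2(X)$.

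First I would record the integrability that makes everything legitimate. By \eqref{betabeta} the function $\eta^h(x)=\eta(h^{-1}.x)$ again lies in $L^2(X)$, so by Cauchy--Schwarz $f\,\overline{\eta^h}\in L^1(X)$ and $\Omega_h$ is a genuine bounded measure. On the direct-integral side, the decomposition \eqref{Leb2} yields, for almost every $y$, that $f_y$ and $\eta_y^h=(\eta^h)_y$ belong to $L^2(X,\nu_y)$ with
\[
\scal{f_y}{\eta_y^h}_{\nu_y}=\int_X f(x)\,\overline{\eta^h(x)}\,d\nu_y(x),
\]
since the inner product on $\int_Y L^2(X,\nu_y)\,dy$ is by construction the fibrewise $\nu_y$-inner product. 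Combining this with \eqref{coareB} and Cauchy--Schwarz in the direct integral gives $\int_Y|\scal{f_y}{\eta_y^h}_{\nu_y}|\,dy\le\|f\|\,\|\eta^h\|<\infty$, so the candidate density lies in $L^1(Y)$.

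Next comes the computation. Fix $E\in\cB(\R^n)$. Because $\nu_y$ is concentrated on $\Phi^{-1}(y)$ by Theorem~\ref{corcoar}(i), for $\nu_y$-almost every $x$ one has $\Phi(x)=y$, hence $\chi_{\Phi^{-1}(E)}(x)=\chi_E(\Phi(x))=\chi_E(y)$; therefore
\[
\chi_E(y)\,\scal{f_y}{\eta_y^h}_{\nu_y}=\int_X \chi_{\Phi^{-1}(E)}(x)\,f(x)\,\overline{\eta^h(x)}\,d\nu_y(x).
\]
Integrating in $y$ and applying the coarea disintegration $dx=\int_Y\nu_y\,dy$ of Theorem~\ref{corcoar}(ii) to the $L^1(X)$ function $\chi_{\Phi^{-1}(E)}\,f\,\overline{\eta^h}$ produces
\[
\int_E\scal{f_y}{\eta_y^h}_{\nu_y}\,dy=\int_{\Phi^{-1}(E)}f(x)\,\overline{\eta^h(x)}\,dx=\Omega_h(E),
\]
the last equality being the definition \eqref{Omega}. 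As this holds for every Borel $E$ and the candidate is integrable, it is the density of $\Omega_h$ and hence coincides with $\omega_h$ almost everywhere. The one point deserving care is the fibrewise identity $\scal{f_y}{\eta_y^h}_{\nu_y}=\int_X f\,\overline{\eta^h}\,d\nu_y$ used above, namely that the abstract components $f_y,\eta_y^h$ are represented by the honest restrictions of $f$ and $\eta^h$ to the fibers $\Phi^{-1}(y)$; this is exactly what the disintegration theory recalled in the Appendix provides, and once it is quoted the rest is Fubini--Tonelli through the coarea formula.
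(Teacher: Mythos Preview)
Your proof is correct and follows essentially the same route as the paper: both arguments apply the coarea disintegration $dx=\int_Y\nu_y\,dy$ to the $L^1$ function $f\,\overline{\eta^h}$ (or a multiple of it) and use that $\nu_y$ is concentrated on $\Phi^{-1}(y)$ to identify the density. The only cosmetic difference is that the paper tests the equality against $\xi\in C_c(Y)$ (writing $\int_X\xi(\Phi(x))f(x)\overline{\eta^h(x)}\,dx=\int_Y\xi(y)\,d\Omega_h(y)$ and disintegrating), whereas you test against characteristic functions $\chi_E$; both determine the density uniquely.
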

\begin{proof}
Item (iii) of Theorem~\ref{th:2}, together with Theorem~\ref{corcoar}, applied to $f\bar\eta\in L^1(X)$ and any $\xi\in C_c(Y)$ gives 
$$
\int_X\xi\left(\Phi(x)\right)f(x)\bar\eta(h^{-1}.x)\,dx
=\int_Y\xi(y)\int_{X}f(x)\bar\eta(h^{-1}.x)\,d\nu_y(x)\,dy.
$$
The left hand side is nothing else but the integral
$
\int_Y\xi(y)\,d\Omega_h(y)
$
because $\Omega_h$ is the image measure,  induced by $\Phi$, of  $f\overline{\eta^{h}}\,dx$. The corollary follows.
\end{proof}

\subsection{Reduction to fibers}\label{fibers} Much of our analysis stems from decomposing the representation space $L^2(\R^d)$ in terms of the measures $\{\nu_y\}$, and from 
a rather detailed understanding of the $H$-action on $Y$.  We thus introduce the usual notation for group actions: if $y\in Y$, then $H_y$ is the stabilizer of $y$, $H[y] =\left\{h[y]:h\in H\right\}$ is the corresponding orbit and $Y/H$ the orbit space. At this stage we   need a hypothesis ensuring that the $Y/H$ is not a pathological measurable space. It is worth mentioning that this hypothesis is satisfied in all the significant examples that we are aware of.  Below we further comment on this.

\vskip0.2truecm
\noindent {\sc Assumption 2.} We assume that for every $y\in Y$ the
$H$-orbit $H[y]$ is locally closed in $Y$, i.e., that it is open in
its closure or, equivalently, that $H[y]$ is the intersection of an
open and a closed set.
\vskip0.2truecm

The above assumption is not enough to guarantee that the orbit space $Y/H$ is a Hausdorff space, hence locally compact, with respect to the quotient topology. However, it is possible to bypass this topological obstruction by choosing a different parametrization of the $H$-orbits of $Y$. Indeed, a result of Effros (Theorem~2.9 in \cite{effros65}) shows that Assumption~2 is equivalent to the fact that the orbit space $Y/H$ is a standard Borel space. Hence there is a locally compact second countable space $ Z$ and a Borel measurable (hence Lebesgue measurable) map $\pi:Y\to  Z$ such that $\pi(y)=\pi(y')$ if and only if $y$ and $y'$ belong to the same orbit. To see this, observe that, by definition of standard Borel space, $Y/H$ with the quotient $\sigma$-algebra is Borel isomorphic to a Borel subset of a Polish space $Z$. By Kuratowski's theorem \cite{kur92}, we may assume that $Z=[0,1]$. Define $\pi(y)=i(\dot y)$, where $\dot y$ is the equivalence class of $y$ in $Y/H$ and $i$ is the Borel isomorphism of $Y/H$ into $[0,1]$.

In the following we fix the space $ Z$ whose points will label the orbits of $Y$ and we choose on $ Z$ a pseudo-image measure\footnote{\label{pseudo}
It is a measure on $ Z$ whose sets of measure zero are exactly the sets whose preimage with respect to $\pi$ have measure zero in $Y$. It always exists since $Y$ is $\sigma$-compact: it is enough to take first a finite measure on $Y$ equivalent to the Lebesgue measure (just choose a positive $L^1$ density), and then to consider the image measure on $ Z$ induced by $\pi$ (see e.g. Chap.~VI, Sect.~3.2 in~\cite{bourbaki_int}).} $\lambda$ of the Lebesgue measure under the map $\pi$. We note that $\lambda$ is concentrated on $\pi(Y)$ and a subset $E$ is $\lambda$-negligible if and only if $\lvert\pi^{-1}(E)\rvert_n=0$, which is equivalent to  $\lvert(\pi\circ\Phi)^{-1}(E)\rvert_d=0$ (item (v) in Lemma~\ref{regular}).

\begin{theorem}\label{adm1} The following  facts are equivalent:
\begin{itemize}
\item[(i)] the vector $\eta\in L^2(\R^d)$ is admissible for $U$;
\item[(ii)] for $\lambda$-almost every $z\in  Z$,  there exists a point 
$y\in\pi^{-1}(z)$ such that
\begin{equation}
\|u\|_{\nu_y}^2=\int_H|\scal{u}{\eta_y^h}_{\nu_y}|^2\frac{dh}{\alpha(h)\beta(h)},
\qquad
u\in L^2(X,\nu_y).
\label{FIRST}
\end{equation}
\end{itemize}
If  ~\eqref{FIRST} holds true for $y$, then it holds true for every point in $H[y]$.
\end{theorem}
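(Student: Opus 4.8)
The plan is to translate admissibility into a fibrewise identity via the coarea data, then to \emph{localise} that identity over $Y$, and finally to convert the resulting a.e.\ statement into (ii) using the trailing orbit-invariance remark. Throughout I use that $X$ has full measure, so $L^2(\R^d)=L^2(X)$. First I would rewrite the admissibility condition directly. Fix $f\in L^2(X)$. By Lemma~\ref{basic} and Corollary~\ref{density}, the density of the measure $\Omega_h$ is the function $y\mapsto\scal{f_y}{\eta_y^h}_{\nu_y}$, so that \eqref{2norm} becomes $\int_G|\scal{f}{U_g\eta}|^2\,dg=\int_H\bigl(\int_Y|\scal{f_y}{\eta_y^h}_{\nu_y}|^2\,dy\bigr)\,dh/(\alpha(h)\beta(h))$. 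The integrand is nonnegative, so Tonelli's theorem permits exchanging the order of integration; combining this with \eqref{coareB} shows that $\eta$ is admissible if and only if
\begin{equation}
\int_Y\|f_y\|_{\nu_y}^2\,dy=\int_Y Q_y(f_y)\,dy,\qquad f\in L^2(X),
\tag{$\ast$}
\end{equation}
where I abbreviate $Q_y(u)=\int_H|\scal{u}{\eta_y^h}_{\nu_y}|^2\,dh/(\alpha(h)\beta(h))\in[0,+\infty]$. In this notation \eqref{FIRST} is precisely the assertion that $Q_y(u)=\|u\|_{\nu_y}^2$ for every $u\in L^2(X,\nu_y)$, i.e.\ that the restriction $\eta_y$ generates a continuous tight frame on the fibre $L^2(X,\nu_y)$.

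The core step is to show that $(\ast)$ is equivalent to the pointwise statement $(\mathrm A)$: \emph{for a.e.\ $y\in Y$ one has $Q_y(u)=\|u\|_{\nu_y}^2$ for all $u\in L^2(X,\nu_y)$}. The implication $(\mathrm A)\Rightarrow(\ast)$ follows by integrating over $Y$. For the converse I would use that $f\leftrightarrow\{f_y\}$ realises the direct integral \eqref{Leb2}, so that the fields $f_y$ range over all square-integrable measurable fields; replacing $f$ by $f\,\chi_{\Phi^{-1}(B)}$ with $B\in\cB(Y)$ localises $(\ast)$ to $\int_B[\|f_y\|_{\nu_y}^2-Q_y(f_y)]\,dy=0$. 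Since admissibility forces $\int_Y Q_y(f_y)\,dy=\|f\|^2<+\infty$, the bracket lies in $L^1(Y)$, and varying $B$ yields $Q_y(f_y)=\|f_y\|_{\nu_y}^2$ for a.e.\ $y$, with the null set depending on $f$. To remove this dependence I would fix a fundamental sequence $\{f^{(k)}\}\subset L^2(X)$ whose fields are total in $L^2(X,\nu_y)$ for a.e.\ $y$ (such a sequence exists by separability of \eqref{Leb2}); off a single null set $N$ the equalities $Q_y(f^{(k)}_y)=\|f^{(k)}_y\|_{\nu_y}^2$ then hold for all $k$ simultaneously. For $y\notin N$, a Fatou argument applied to $u\mapsto\scal{u}{\eta_y^h}_{\nu_y}$ (continuous in $u$ for each fixed $h$) upgrades this to the Bessel bound $Q_y(u)\le\|u\|_{\nu_y}^2$ for \emph{all} $u$; hence the frame operator $S_y$ determined by $\scal{S_yu}{u}_{\nu_y}=Q_y(u)$ is bounded with $0\le S_y\le\id$, and $\scal{(\id-S_y)f^{(k)}_y}{f^{(k)}_y}_{\nu_y}=0$ gives $(\id-S_y)^{1/2}f^{(k)}_y=0$ for every $k$; density of the fields then forces $S_y=\id$, which is $(\mathrm A)$. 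I expect this passage to be the main obstacle: one must justify joint measurability of $(y,h)\mapsto|\scal{f_y}{\eta_y^h}_{\nu_y}|^2$, the existence of the total fundamental sequence, and the bounded-operator conclusion; the rest is routine.

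It remains to prove the trailing remark and match $(\mathrm A)$ with (ii). Fix $y$ and $y'=h_0[y]$. By the covariance \eqref{alfabeta}, the map $T_{h_0}u=(\alpha(h_0)\beta(h_0))^{-1/2}\,u(h_0^{-1}.\,\cdot)$ is a unitary of $L^2(X,\nu_y)$ onto $L^2(X,\nu_{y'})$, and one computes $\eta_{y'}^{h_0h}=(\alpha(h_0)\beta(h_0))^{1/2}\,T_{h_0}\eta_y^{h}$. Substituting $h\mapsto h_0h$ in the integral defining $Q_{y'}(T_{h_0}u)$ and invoking the left-invariance of $dh$ together with the multiplicativity of $\alpha\beta$, the factors $\alpha(h_0)\beta(h_0)$ cancel and one obtains $Q_{y'}(T_{h_0}u)=Q_y(u)$; since $T_{h_0}$ is unitary and onto, \eqref{FIRST} at $y$ is thus equivalent to \eqref{FIRST} at $y'$, proving the remark. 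Consequently the set $G_0=\{y:\eqref{FIRST}\text{ holds}\}$ is $H$-invariant, so $G_0=\pi^{-1}(\pi(G_0))$, and for each $z$ the condition $\pi^{-1}(z)\cap G_0\neq\emptyset$ coincides with $\pi^{-1}(z)\subseteq G_0$. Because $\lambda$ is a pseudo-image of the Lebesgue measure, $Y\setminus G_0$ is Lebesgue-null if and only if $Z\setminus\pi(G_0)$ is $\lambda$-null; hence $(\mathrm A)$ (that $G_0$ is conull in $Y$) is exactly (ii) (that $\lambda$-a.e.\ $z$ admits a point of $\pi^{-1}(z)$ in $G_0$). This closes the chain (i)$\Leftrightarrow(\ast)\Leftrightarrow(\mathrm A)\Leftrightarrow$(ii).
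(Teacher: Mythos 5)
Your overall route coincides with the paper's own proof: the paper likewise combines Lemma~\ref{basic} and Corollary~\ref{density} with Fubini to obtain the double-integral identity, localises it over $Y$ (there by applying the reproducing identity to a countable family $\{\varphi_\ell\}\subset C_c(X)$ and arguing as in Corollary~\ref{density}, the same device as your $f\,\chi_{\Phi^{-1}(B)}$), collects the exceptional sets into a single null set $N$, proves $N$ is $H$-invariant by exactly the change of variables you package into the unitary $T_{h_0}$ (your $T_{h_0}$ is the paper's later Lemma~\ref{T}), and passes to $Z$ by the pseudo-image property of $\lambda$. The only structural difference is the dense-to-everything upgrade: the paper invokes closedness of the operator $(\cW_y u)(h)=\scal{u}{\eta^h_y}_{\nu_y}$, while you use Fatou plus the frame operator $0\le S_y\le \id$; these are equivalent in substance, and your version, carried out correctly, in fact spells out what the paper's terse appeal to closedness leaves implicit.

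However, as written your upgrade step has a genuine gap, precisely at the point you flagged. You take a fundamental sequence whose fields are merely \emph{total} in almost every fibre --- which is indeed all that separability of \eqref{Leb2} yields --- and then claim Fatou gives the Bessel bound $Q_y(u)\le\|u\|_{\nu_y}^2$ for all $u$. Fatou only gives lower semicontinuity of $Q_y$ along norm-convergent sequences, so to bound $Q_y(u)$ you need approximants $u_j\to u$ on which $Q_y$ is already controlled; totality gives density of the \emph{linear span}, but the identity $Q_y(f^{(k)}_y)=\|f^{(k)}_y\|_{\nu_y}^2$ does not propagate to linear combinations (for $u=\sum_k c_k f^{(k)}_y$ the triangle inequality for the seminorm $Q_y^{1/2}$ only yields $Q_y(u)^{1/2}\le\sum_k|c_k|\,\|f^{(k)}_y\|_{\nu_y}$, which is useless), and the set $\{u:Q_y(u)=\|u\|_{\nu_y}^2\}$ is not a linear subspace. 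The repair is immediate: use a countable family that is dense \emph{as a set} in every fibre simultaneously --- for instance the paper's family of Footnote~\ref{separa}, built from dominated uniform approximation in $C_c(X)$, or close your sequence under rational linear combinations and rerun your per-$f$ localisation for each member, taking the union of the countably many null sets. With set-density in hand, Fatou along $f^{(k_j)}_y\to u$ gives $Q_y(u)\le\liminf_j\|f^{(k_j)}_y\|_{\nu_y}^2=\|u\|_{\nu_y}^2$, and your frame-operator argument (which only needs totality, since $(\id-S_y)^{1/2}$ is bounded and vanishes on a total set) then closes the proof. Everything else --- the localisation, the covariance computation $Q_{h_0[y]}\circ T_{h_0}=Q_y$ establishing the trailing remark, and the translation between conullity of the good set in $Y$ and condition (ii) via the pseudo-image measure --- is correct and matches the paper.
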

\begin{proof} Given $\eta\in L^2(X,dx)$, write $\eta=\int_Y \eta_y\,dy$ where
$\eta_y\in L^2(X,\nu_y)$.  Fix $y\in Y$ and put
\[
\cD_y=\{ u\in L^2(X,\nu_y):
\int_H|\scal{u}{\eta_y^h}_{\nu_y}|^2\frac{dh}{\alpha(h)\beta(h)}<+\infty\}.\]
The map $\cW_y: \cD_y\to L^2(H, \alpha(h^{-1})\beta(h^{-1})dh)$,
defined by $(\cW_y u)(h)= \scal{u}{\eta^{h}_y}$ for almost all $h\in H$, 
is a closed linear operator (the proof is
standard \cite{dumo76}). Hence it is enough to prove~\eqref{FIRST} for a
dense countable subset of $L^2(X,\nu_y)$.  
Hence we fix a  countable family of functions $\{\varphi_\ell\}$ in $C_c(X)$ with
the following property: given an arbitrary $\varphi\in C_c(X)$, there
exists a subsequence
$(\varphi_{\ell_k})_{k\in\N}$ such that  
\begin{equation}
|\varphi_{\ell_k}|\leq|\varphi_0|,
\qquad
\lim_{k\to\infty}\sup_{x\in X}|\varphi_{\ell_k}(x)-\varphi(x)|=0.
\label{separable}
\end{equation}
The existence of such a family is clarified in Footnote~\ref{separa}
in the Appendix.  Clearly, for any $y\in Y$, the family
$\{\varphi_\ell\}$ is dense in $L^2(X,\nu_{y})$. \\
Assume that  $U$ is reproducing and take an admissible $\eta\in
L^2(X)$. For any $\ell$ we thus have
$$
\int_G |\scal{\varphi_\ell}{U_g\eta}|^2\, dg=
\int_X|\varphi_\ell(x)|^2\,dx=
\int_Y\bigl(\int_{X}|\varphi_\ell(x)|^2\,d\nu_y(x)\bigr)\,dy,
$$
the latter being a consequence of the coarea formula \eqref{coareB}. 
By Lemma~\ref{basic} the measure $\Omega_h^\ell$ in~\eqref{Omega}
has an $L^2$-density $\omega_h^\ell$ for almost every $h\in H$ and
formula~\eqref{2norm} holds true; furthermore, Corollary~\ref{density} tells us
that $\omega_h$ can be expressed in terms of the measures
$\{\nu_y\}$. Therefore  
\begin{align*}
\int_Y\int_{X}|\varphi_\ell(x)|^2\,d\nu_y(x)\,dy
&=\int_G |\scal{\varphi_\ell}{U_g\eta}|^2\, dg\\
&=\int_H\bigl(\int_{Y}|\omega_{h}^\ell(y)|^2\,dy\bigr)
\,\frac{dh}{\alpha(h)\beta(h)}\\
&=\int_H\bigl(\int_{Y}|\scal{\varphi_\ell}{\eta_y^h}_{\nu_y}|^2\,dy\bigr)
\,\frac{dh}{\alpha(h)\beta(h)}\\
&=\int_{Y}\bigl(\int_H|\scal{\varphi_\ell}{\eta_y^h}_{\nu_y}|^2\bigr)
\,\frac{dh}{\alpha(h)\beta(h)}\,dy,
\end{align*}
where in the last line we have applied Fubini's theorem. 
Let $N_\ell\subset Y$ be the set of $y\in Y$ where the equality
\begin{equation} 
\|\varphi_\ell\|_{\nu_y}^2=\int_H|\scal{\varphi_\ell}{\eta_y^h}_{\nu_y}|^2\frac{dh}{\alpha(h)\beta(h)}.
\label{FIRSTell}  
\end{equation}
does not hold. Reasoning
as in the proof of Corollary~\ref{density}, the equality of the first
and last term of the above string  is equivalent to saying that $N_\ell$ is negligible. \\
Put $N=\cup_{\ell}N_{\ell}$,   a negligible set.  For any
$y\not\in N$, \eqref{FIRSTell}~shows that
$\{\varphi_\ell\}\subset \mathcal D_y$ and $\mathcal W_y$
is an isometry on this dense subset. Since $\mathcal W_y$ is a closed
operator, it follows that $\mathcal D_y=L^2(X,\nu_y)$
and~\eqref{FIRST} holds true for every $u\in L^2(X,\nu_y)$. \\
Now, $N$ is the set consisting of those $y\in Y$ for which  the equality~\eqref{FIRST} does
not hold for at least a $u\in L^2(X,\nu_y)$.  We show that  $N$
is $H$-invariant. Take  $h\in H$ and 
$y\not\in N$.  For any $\varphi\in C_c(X)$,   both $\varphi$
and $\varphi^{h^{-1}}$ are in $L^2(X,\nu_y)$. Hence~\eqref{FIRST}  does hold  for  $u=\varphi$ and $u=\varphi^{h^{-1}}$.
Using~\eqref{shiftmeasure} and \eqref{alfabeta}, we obtain
\begin{align*}
\int_X|\varphi(x)|^2\,d\nu_{h[y]}(x)
&=\int_X|\varphi(h.x)|^2\,d\nu^h_{h[y]}(x)\\
&=\int_X|\varphi(h.x)|^2\,\alpha(h)\beta(h)d\nu_y(x)\\
&=\alpha(h)\beta(h)
\int_H\Bigl|\int_X \varphi(h.x)\bar\eta(k^{-1}.x)\,d\nu_y(x)\Bigr|^2\frac{dk}{\alpha(k)\beta(k)}\\
(h.x=z)\hskip0.2truecm
&=\alpha(h)\beta(h)
\int_H\Bigl|\int_X \varphi(z)\bar\eta((hk)^{-1}.z)\,d\nu^{h^{-1}}_y(z)\Bigr|^2\frac{dk}{\alpha(k)\beta(k)}\\
(hk=s)\hskip0.2truecm
&=\alpha^2(h)\beta^2(h)
\int_H\Bigl|\int_X \varphi(z)\bar\eta(s^{-1}.z)\,d\nu^{h^{-1}}_y(z)\Bigr|^2
\frac{ds}{\alpha(s)\beta(s)}\\
&=\int_H\Bigl|\int_X \varphi(z)\bar\eta(s^{-1}.z)\alpha(h)\beta(h)
\,d\nu^{h^{-1}}_{y}(z)\Bigr|^2
\frac{ds}{\alpha(s)\beta(s)}\\
&=\int_H\Bigl|\int_X \varphi(z)\bar\eta(s^{-1}.z)\,d\nu_{h[y]}(z)\Bigr|^2
\frac{ds}{\alpha(s)\beta(s)}\\
&=\int_H|\scal{\varphi}{\eta^s}_{\nu_{h[y]}}|^2\,\frac{ds}{\alpha(s)\beta(s)},
\end{align*}
that is, $h[y]\not\in N$, as desired. 
Finally, since $N$ is $H$-invariant and
negligible,  $\pi(\cup_{\ell}N_{\ell})$ is $\lambda$-negligible and
(ii) follows.

The fact that (ii) implies that $U$ is reproducing is proved by reversing the argument.
\end{proof}
\begin{remark}
Since $\pi$ induces a Borel isomorphism between the orbit space $Y/H$ and $\pi(Y)$, 
in the above statement and in the theorems of the following section it would be  possible to avoid the  space $ Z$ by considering on $Y/H$ a $\sigma$-finite measure defined on the quotient $\sigma$-algebra, which, by Assumption~2 (Theorem~2.9 in \cite{effros65}), coincides with  the Borel $\sigma$-algebra induced by the quotient topology. However, this measure could fail to be  finite on compact subsets.
\end{remark}

\subsection{Disintegration formulae}\label{sec-ass2}
 Our next result, Theorem~\ref{adm2}, is based on some classical  formulae that allow both a geometric interpretation of the integral~\eqref{FIRST} and a computational reduction that in the known examples is indeed significant. 
This is inspired by the irreducible case, where it is known that  $U$ is reproducing (i.e. square integrable) if and only if the $H$-orbit, unique by irreducibility, has full measure and the inducing representation of the stabilizer $H_y$ is square integrable~\cite{ernie98}.

We allude to  formulae that express an integral over $Y$ as a double integral,  first along the single $H$-orbits and then   with respect to the measure $\lambda$ on the space $Z$. Although these kinds of formulae can be traced back to Bourbaki~\cite{bourbaki_intII} and Mackey~\cite{mackey52}, perhaps one of the most famous occurrences of such a disintegration procedure appears in the celebrated paper of Kleppner and Lipsman \cite{klli72}; for a recent review see~\cite{fu09}. Much in the same spirit, we shall also need to decompose integrals over $H$ by integrating along a closed  subgroup $H_0$ first, and then over the homogeneous space $H/H_0$, which we identify with a suitable orbit of $Y$. 
The topological hypothesis formulated in Assumption~2 is needed in order that these decomposition formulae  can be safely applied.

Recall that in the beginning of Section~\ref{fibers} we fixed a space
$ Z$ that labels the orbits of~$Y$ and a measure $\lambda$ on $ Z$
whose null sets are in one-to-one correspondence with the
$H$-invariant null sets of $Y$.

\begin{theorem}\label{Tmackey} There exists a  family $\{\tau_ {z}\}$ of  measures on $Y$, labeled by the points of ${ Z}$, with  the following properties: 
\begin{itemize}
\item[(i)]   $\tau_ {z}$  is concentrated on  $\pi^{-1}(z)$ for all $z\in{ Z}$;
\item[(ii)]  $dy=\int_{ Z} \tau_{z}\,d\lambda(z) $.
\end{itemize}
Furthermore, for  almost every $z\in{ Z}$ the measure $\tau_z$ is relatively invariant and 
\begin{equation}
     \label{mackey_inv} 
    \tau_{z}^h=\alpha(h)^{-1}  \tau_{z}
  \end{equation}
  holds for every $h\in H$.
The family $\{\tau_{z}\}$ is unique in the sense that if
$\{\tau_{z}'\}$ is another family satisfying {\rm (i)} and {\rm (ii)}, then $\tau_{z}'=\tau_{z}$ for almost every ${z}\in{ Z}$.
\end{theorem}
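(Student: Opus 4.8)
Theorem~\ref{Tmackey} asks us to disintegrate the Lebesgue measure $dy$ on $Y$ along the $H$-orbits, with the orbits parametrized by $Z$ via $\pi$, and to establish the relative invariance \eqref{mackey_inv} together with uniqueness.

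The plan is to obtain existence, concentration, and uniqueness directly from the abstract disintegration machinery of the Appendix, and then to deduce the relative invariance \eqref{mackey_inv} by a uniqueness argument.

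First I would apply the disintegration theorem (Theorem~\ref{th:2}) to the Lebesgue measure $dy$ on $Y$, the $\lambda$-measurable map $\pi:Y\to  Z$, and the measure $\lambda$. Since $dy$ is a Radon measure on the second countable space $Y$ and, by construction (see the footnote in Section~\ref{fibers}), $\lambda$ is a pseudo-image of $dy$ under $\pi$, the theorem produces a $\lambda$-almost everywhere unique scalarly integrable family $\{\tau_z\}$ of measures on $Y$ with $\tau_z$ concentrated on $\pi^{-1}(z)$ and $dy=\int_Z\tau_z\,d\lambda(z)$. This settles (i), (ii), and the uniqueness clause in one stroke.

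Next comes the relative invariance. The key structural fact is that $\pi$ is $H$-invariant: each fiber $\pi^{-1}(z)$ is a single $H$-orbit, so $h[\pi^{-1}(z)]=\pi^{-1}(z)$ and hence $\tau_z^h$ is again concentrated on $\pi^{-1}(z)$ for every $h$. A change of variables using $\alpha(h)=|\det(y\mapsto h^{-1}[y])|$ shows that Lebesgue measure on $\R^n$ is relatively invariant, $(dy)^h=\alpha(h)^{-1}\,dy$. Since the operation $\mu\mapsto\mu^h$ commutes with scalar integration over $Z$, namely $(dy)^h=\int_Z\tau_z^h\,d\lambda(z)$, the family $\{\alpha(h)\tau_z^h\}_z$ is concentrated on the fibers and satisfies $\int_Z\alpha(h)\tau_z^h\,d\lambda(z)=\alpha(h)(dy)^h=dy$; that is, it is another disintegration of $dy$ along $\pi$. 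The uniqueness just established then forces, for each fixed $h$, the equality $\tau_z^h=\alpha(h)^{-1}\tau_z$ for $\lambda$-almost every $z$.

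The main obstacle is the order of quantifiers: the exceptional null set a priori depends on $h$, whereas the statement requires a single null set outside of which \eqref{mackey_inv} holds for all $h$ at once. To overcome this I would invoke the second countability of $H$: choose a countable dense subset $\{h_j\}\subset H$, let $N\subset Z$ be the union of the countably many associated null sets, and for $z\notin N$ upgrade equality on $\{h_j\}$ to equality for all $h$ by a continuity argument. Indeed, for fixed $z\notin N$ and $\varphi\in C_c(Y)$ the map $h\mapsto\int_Y\varphi\,d\tau_z^h=\int_Y\varphi(h^{-1}[y])\,d\tau_z(y)$ is continuous by dominated convergence, since as $h$ ranges over a compact neighborhood the sets $\{y:h^{-1}[y]\in\supp\varphi\}$ remain in a common compact set and $\tau_z$ is finite on compacta; the same holds for $h\mapsto\alpha(h)^{-1}\int_Y\varphi\,d\tau_z$. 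Agreement on the dense set $\{h_j\}$ therefore propagates to all $h$, giving \eqref{mackey_inv} for every $h$ and $\lambda$-almost every $z$, which is exactly the asserted relative invariance.
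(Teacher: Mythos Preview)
Your overall strategy---disintegrate $dy$ along $\pi$, then derive the relative invariance from uniqueness---is exactly the paper's approach. There is, however, one concrete mis-step: Theorem~\ref{th:2} in the Appendix does \emph{not} produce the family $\{\tau_z\}$. Read its hypotheses: it \emph{assumes} a family satisfying (a) and (b) and then establishes the properties (i)--(iv). Only item~(iv), the uniqueness, is of use to you, and only after you already have a disintegration in hand. For existence you need the genuine disintegration theorem---Theorem~2 of Ch.~VI \S~3.3 in \cite{bourbaki_int}, which is precisely what the paper invokes---using that $\lambda$ is a pseudo-image of $dy$ under the measurable map $\pi$. Once you make this correction, your plan is complete: (iv) of Theorem~\ref{th:2} then supplies the uniqueness you call on in the relative-invariance step.

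Your argument for \eqref{mackey_inv}---showing $\{\alpha(h)\tau_z^h\}$ is a second disintegration of $dy$ along $\pi$, invoking uniqueness for each fixed $h$, and then exchanging the quantifiers via a countable dense subset of $H$ together with the continuity of $h\mapsto\int_Y\varphi(h^{-1}[y])\,d\tau_z(y)$---is correct. The dominated-convergence justification is sound because for $h$ in a compact neighbourhood $K$ the integrand is supported in the compact set $K[\supp\varphi]$, on which $\tau_z$ is finite. This is essentially the content of Lemma~11.5 of \cite{mackey52}, which the paper cites rather than re-proves; you have simply unpacked what the paper outsources to Mackey. The two proofs therefore coincide in substance.
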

\begin{proof}
The content of the theorem can be found in many different papers, such as
Lemmas~11.1 and 11.5 in \cite{mackey52} and Theorem~2.1 of
\cite{klli72}, in slightly different contexts.  The cited results are both
based on Bourbaki's treatment of disintegration of measures.  Here we simply adapt this theory to our setting.\\
Theorem~2 Ch.VI \S~3.3 of
\cite{bourbaki_int} yields a family $\{\tau_{z}\}$ of measures on $Y$ labeled by the points $z\in  Z$, unique in the sense of the statement, such that 
\begin{itemize}
\item $\tau_z\neq 0$ if and only if $z\in\pi(Y)$
\item $\tau_z$ is concentrated on $\pi^{-1}(z)$
\item $dy=\int_ Z \tau_z d\lambda(z)$.
\end{itemize}
The proof of Lemma~11.5 in \cite{mackey52}  shows, under the circumstances that we are considering,  that for almost all $z\in Z$  \eqref{mackey_inv} holds true for all $h\in
  H$; the density appearing in Lemma~11.4 of \cite{mackey52} is precisely
  $\alpha^{-1}$. 
\end{proof}

\subsubsection{A topological detour}\label{TD} Assumption~2 is needed
in order to prove Theorem~\ref{Tmackey} because we apply results on
disintegration of measures that use it, as developed
in~\cite{bourbaki_int}. The same theorem actually holds under the
(weaker) conditions that are described in the proposition below. Their
equivalence does not seem to be a known fact. In \cite{fu09},
Theorem~12, it is shown that (ii) in Lemma~\ref{toplemma} below is a
necessary condition for the disintegration in Theorem~\ref{Tmackey} to
hold true. In the next statement ${\hat\pi}$ denotes the canonical
projection from $Y$ onto $Y/H$.

\begin{proposition}\label{toplemma}
The following two conditions are equivalent:
  \begin{enumerate}
\item[(i)]  there exists an increasing sequence of compact subset $\{K_n\}$ of
    $Y$ such that the complement of $\cup K_n$ is Lebesgue negligible
    and ${\hat\pi}(K_n)$ endowed
    with the relative topology is a Hausdorff space; 
\item[(ii)] there exists an $H$-invariant null set $N\subset Y$ such that
  $(Y\setminus N)/H$ is a standard Borel space with respect to the $\sigma$-algebra induced by $\hat\pi$. 
\end{enumerate}
\end{proposition}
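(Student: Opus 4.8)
The plan is to establish the two implications separately, using the Effros--Mackey machinery that is already available from Assumption~2 and the surrounding discussion. The key conceptual point is that ``standard Borel'' and ``Hausdorff orbit space'' are linked through the Effros theorem (Theorem~2.9 in \cite{effros65}): local closedness of orbits is equivalent to the orbit space being standard Borel. On compact pieces where the quotient is genuinely Hausdorff, the quotient topology is well-behaved and the induced Borel structure is standard; conversely, a standard Borel structure on a cocountable-type invariant piece can be ``localized'' to compact sets on which $\hat\pi$ separates points.

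\emph{From (i) to (ii).} First I would set $N=Y\setminus\bigcup_n K_n$, which is $H$-invariant after replacing it by its $H$-saturation (still null, since $\Phi$ and the $H$-action preserve null sets by Lemma~\ref{regular}(ii),(v) and \eqref{beta}). On each $K_n$ the set $\hat\pi(K_n)$ is Hausdorff in the relative topology; being the continuous image of a compact set it is compact Hausdorff, hence a standard Borel space in its Borel $\sigma$-algebra. The orbit space $(Y\setminus N)/H$ is then a countable union of the images $\hat\pi(K_n\setminus N)$, each Borel-isomorphic to a Borel subset of a compact metrizable space. A countable union of standard Borel pieces, glued along the inclusions induced by $K_n\subset K_{n+1}$, is again standard Borel, provided one checks that the Borel structure induced by $\hat\pi$ agrees with the union of the piecewise structures; this compatibility follows from the fact that $\hat\pi$ is Borel measurable and separates the orbits meeting each $K_n$.

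\emph{From (ii) to (i).} Here I would invoke the standard-Borel hypothesis on $(Y\setminus N)/H$ together with the regularity of Lebesgue measure. Because $Y\setminus N$ is $\sigma$-compact, write it as an increasing union of compact sets; the obstacle is that $\hat\pi(K_n)$ need not be Hausdorff in general. To repair this, I would use that a standard Borel space carries a finer Polish topology inducing the same Borel sets, pull a countable separating family of Borel sets back through $\hat\pi$, and then apply inner regularity to extract, inside each compact piece, a compact subset on which this countable family already separates orbits --- so that $\hat\pi$ restricted to it is injective onto a Hausdorff image. Exhausting $Y\setminus N$ by such compacta up to a further null set yields the required increasing sequence $\{K_n\}$ with $\hat\pi(K_n)$ Hausdorff.

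The main obstacle, and the part deserving the most care, is the direction (ii)$\Rightarrow$(i): passing from an abstract standard Borel quotient structure to an honest topological Hausdorff property on compact pieces. The subtlety is that the quotient topology on $Y/H$ may be badly non-Hausdorff even when the Borel structure is standard, so one cannot simply read off Hausdorffness; the separation must be engineered on compact subsets using a countable separating family of Borel sets coming from the Polish realization, combined with inner regularity of Lebesgue measure to discard a null set. I expect the forward direction (i)$\Rightarrow$(ii) to be comparatively routine, essentially a matter of assembling countably many standard-Borel pieces, whereas the reverse direction requires the genuine descriptive--set--theoretic input.
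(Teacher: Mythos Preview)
Your assessment of the difficulty is inverted, and both directions have genuine gaps.

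In the direction (i)$\Rightarrow$(ii), replacing $N=Y\setminus\bigcup K_n$ by its $H$-saturation can enlarge it, and there is no reason the saturation of a null set remains null; the correct move (which the paper makes) is to saturate $\bigcup K_n$ instead and take the complement. More seriously, knowing that each $\hat\pi(K_n)$ is compact Hausdorff, hence compact metrizable, does \emph{not} by itself show that the quotient $\sigma$-algebra on $(Y\setminus N)/H$ is standard. The quotient $\sigma$-algebra consists of those $E$ with $\hat\pi^{-1}(E)$ Borel in $Y\setminus N=H[\bigcup K_n]$; from your gluing you only control $\hat\pi^{-1}(E)\cap\bigcup K_n$, and passing from this to its $H$-saturation lands you in analytic sets, not Borel sets. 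The paper handles exactly this obstruction by first producing a Lebesgue-measurable orbit-separating map $p:Y\to\Omega$ into a locally compact second countable space (this is where the hypothesis enters, via Bourbaki's notion of measurable equivalence relation), then using Lusin to get a conull $H$-invariant Borel $Y_1$ on which $p$ is Borel, so that $Y_1/H$ is \emph{countably separated}; Mackey's theorems then give that $Y_1/H$ is analytic and, after discarding a further invariant null set, standard. Your sketch provides no substitute for this analytic-to-standard step, and the ``compatibility follows from $\hat\pi$ Borel and orbit-separating'' remark does not close the gap.

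For (ii)$\Rightarrow$(i), your instinct to pull back a countable separating family and invoke inner regularity is correct and essentially matches the paper, but the stated mechanism is wrong: $\hat\pi$ restricted to a compact $K$ is \emph{not} injective (orbits may meet $K$ many times), and that is not what you need. What you need is a \emph{measurable} map $p$ from $Y$ to a Hausdorff space that is constant on orbits and separates them (the embedding $j:(Y\setminus N)/H\hookrightarrow[0,1]$ composed with $\hat\pi$ will do on $Y\setminus N$, extended arbitrarily on $N$). Lusin then yields compacta $K_m$ on which $p$ is continuous; since $p$ factors as $K_m\xrightarrow{\hat\pi}\hat\pi(K_m)\to p(K_m)$ with the second arrow a continuous bijection from a compact space to a Hausdorff space, it is a homeomorphism, whence $\hat\pi(K_m)$ is Hausdorff. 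This direction is the short one in the paper; the substantial descriptive-set-theoretic input lies in (i)$\Rightarrow$(ii).
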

\begin{proof}
  First we show that (i) implies (ii).  Denote by $R$ the equivalence
  relation induced by the action of $H$ on $Y$, that is, $y\sim_R y'$ if
  and only if ${\hat\pi}(y)={\hat\pi}(y')$. \\
{\bf Claim 1:} there exists a  Lebesgue measurable map $p$ from $Y$ into
a locally compact second countable space 
  $\Omega$  with the property
  \begin{equation}
p(y)=p(y') \qquad \iff\qquad  y\sim_R y'\label{comprato}.
\end{equation}
By assumption
  for each $n$ the space ${\hat\pi}(K_n)$ is Hausdorff and, by Prop.~3 Ch.1 \S~5.3 of
      \cite{bourbaki_GT}, this is  equivalent to the fact the quotient
      space $K_n/R_n$ is Hausdorff with respect to the  quotient
      topology, where $R_n$ the restriction of $R$ 
      to $K_n\times K_n$.  Since $Y$ is $\sigma$-compact, the above property implies that 
      $R$ is a Lebesgue measurable  equivalence relation according to the definition in
  Ch.~VI \S~3.4 of \cite{bourbaki_int}. By Proposition~2 Ch.~VI
  \S~3.4 of \cite{bourbaki_int} there exists a map $p:Y\to\Omega$
  with the desired properties.\\
{\bf Claim 2:} for any compact set $K$ of $Y$, the set $H[K]$ is Borel measurable. Indeed, since $H$ is
$\sigma$-compact, there exists a countable family $\{H_m\}$ of compact
subsets of $H$ such that $H=\cup_{m} H_m$ and, hence,   $H[K]=\cup_m
H_m[K]$. Hence $H[K]$ is countable union of compact subsets, hence
Borel measurable, since  the action of $H$ on $Y$ is continuous
and $H_m\times K$ is compact.\\
{\bf Claim 3:} there exists an $H$-invariant Borel set $Y_1$ whose
complement is Lebesgue negligible and  such that the restriction
$p_{\mid Y_1}$ is Borel measurable. The proof of Proposition~2 Ch.~VI
  \S~3.4 of \cite{bourbaki_int} actually implies the claim. For completeness, however,
   we present a direct proof.  Lusin's
  theorem\footnote{ See, for example, Theorem 5.6.23 \cite{sch97} or
    the definition of measurable function given in
    \cite{bourbaki_int}.} yields an
  increasing sequence of compact subsets  $\{K'_m\}$ of $Y$ such that the
  complement of $\cup K'_m$ is Lebesgue negligible and the restriction
  of $p$ to each $K_m$ is continuous.   By Claim~2 the set $Y_1=H[\cup_m K'_m]$ and its complement
$N_1=Y\setminus Y_1$ are both $H$-invariant
Borel subsets,  and $N_1$ is Lebesgue negligible since $N_1\subset Y
\setminus \cup_m K'_m$.  To prove that $p_{\mid Y_1}$ is Borel
measurable,  for any closed subset $C\subset \Omega$
\begin{align*}
p_{\mid Y_1}^{-1}(C)&=p^{-1}(C)\cap Y_1
=\cup_m p^{-1}(C)\cap H[K'_m]\\
&= \cup_mH[C\cap K'_m]
=\cup_m H[p^{-1}_{\mid K_m}(C)],
\end{align*}
since $p^{-1}(C)=H[p^{-1}(C)]$ by~\eqref{comprato}. Since
$p^{-1}_{\mid K_m}(C)$ is compact, Claim~2 implies that $p_{\mid
  Y_1}^{-1}(C)$ is Borel measurable. \\
{\bf Claim 4:}  the quotient space $Y_1/H$ is analytic. Since $Y_1$ is a
Borel subset of a locally compact second countable space, it is
standard and, hence, analytic. By Theorem~5.1 of \cite{mac57},
if a quotient space of an analytic Borel space is countably separated, then it is analytic. 
Hence, it is enough to exhibit a countable family
$\{A_m\}$ of $H$-invariant Borel sets of $Y_1$ with the property that
for any pair of  points $y,y'\in Y_1$ such that $y\not\sim_R y'$, there exists $A_m$
such that $y\in A_m$ and $y'\not\in A_m$.  To find  such a family, choose
a countable base $\{V_m\}$ for the second countable topology of
$\Omega$ and define $A_m=p_{\mid Y_1}^{-1}(V_m)$, which is an
$H$-invariant Borel subset of $Y_1$ by~\eqref{comprato} and Claim~3. If
$y\not\sim_R y'$, then $p(y)\neq p(y')$ and, since $\Omega$ is
Hausdorff, there exists $V_m$ such
that $p(y)\in V_m$ and $p(y')\not\in V_m$, that is, $y\in A_m$ and $y'\not\in A_m$. \\
{\bf Claim 5:}  there exists an $H$-invariant  Borel set $Y_2\subset
Y_1$ whose complement is Lebesgue negligible and 
  $Y_2/H$ is a standard Borel space.  Since $Y$ is second countable,
  there exists a finite measure on the analytic space $Y_1/H$, which is the pseudo-image
  measure of  the Lebesgue measure of $Y$.  By Theorem~6.1 of
  \cite{mac57}, there exists a Borel subset $E\subset
  Y_1/H$ whose complement is negligible and $E$ is a standard Borel
  space.  The set $Y_2=\hat{\pi}^{-1}(E)$ has the desired properties. 

Item~(ii) is  proved by setting $N=Y\setminus Y_2=N_1\cup (Y_1\setminus
Y_2)$ and observing that $(Y\setminus N)/H$ is Borel isomorphic to $E$.
\vskip0.2truecm

We now  show that (ii) implies (i). By assumption there exists a Borel
$H$-invariant Borel set $N\subset Y$ with zero Lebesgue measure such that
$(Y\setminus N)/H$ is Borel isomorphic to a Borel subset of $[0,1]$ and, hence,
there exists a Borel injective map
$j:(Y\setminus N)/H\to \mathbb R$. If $N\neq\emptyset$, fix a section
$s:N/H\to N$,  a point  $y_0\in Y\setminus N$, and define $p:Y\to Y\times [0,1]$ by
$$
p(y)=
\begin{cases}
 (y_0, i({\hat\pi}(y))) & y\not\in N \\
  (s({\hat\pi}(y)),0) & y\in N.
\end{cases}
$$
Clearly, the map $p$ is Lebesgue measurable and $p(y')=p(y)$ if and
only if ${\hat\pi}(y)={\hat\pi}(y')$.  Lusin's theorem implies that there exists an
  increasing sequence of compact subsets $\{K_m\}$ such that the
  complement of $\cup K_m$ is Lebesgue negligible and the restriction
  of $p$ to each $K_m$ is continuous.  By a  standard result in topology,
  (see e.g. Corollary~1 of Proposition~8 \S~10.6 of \cite{bourbaki_GT}), 
${\hat\pi}(K_m)$ is homeomorphic to $p(K_m)$ which is a
  compact subset of a Hausdorff space, so it is Hausdorff.
\end{proof}
In the statement of the above proposition  $Y$ can be replaced by any locally
compact second countable space, the Lebesgue measure by  a  measure on
$Y$ and the equivalence relation induced by $H$ by any other equivalence
relation. 

\subsection{The integral decomposition of $U$} From now on Assumptions~1 and~2 are taken for granted. The main result here is that Theorems~\ref{corcoar} and \ref{Tmackey}, which hold both true, yield an integral decomposition of the  mock metaplectic representation  in terms of induced representations of the  isotropy subgroups of $H$. This fact, which is of independent interest, is at the root of Theorem~\ref{adm2}, where  the admissible vectors for $U$ are characterized.

\begin{proposition}\label{double}
For almost every $z\in  Z$  the family of measures $\{\nu_y\}$ is scalarly  integrable with respect to $\tau_{z}$,  the measure on $X$
$$
\mu_{z}=\int_Y\nu_y\,d\tau_{z}(y)
$$
is concentrated on the $H$-invariant  subset
$\Phi^{-1}(\pi^{-1}(z))$ and for all $h\in H$
\begin{equation}
 \mu_{z}^h=\beta(h)\mu_{z}.
\label{mucov}
\end{equation}
Furthermore, the  family of measures $\{\mu_{z}\} $
is  scalarly integrable with respect to $\lambda$ and 
\begin{equation}
dx=\int_{ Z} \mu_{z}\,d\lambda(z).
\label{mudis}
\end{equation}
\end{proposition}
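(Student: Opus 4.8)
The plan is to read this proposition as a \emph{transitivity of disintegration} statement: given the coarea disintegration $dx=\int_Y\nu_y\,dy$ of Theorem~\ref{corcoar} and the Mackey disintegration $dy=\int_Z\tau_z\,d\lambda(z)$ of Theorem~\ref{Tmackey}, I want to ``glue'' them into a single two-step disintegration $dx=\int_Z\mu_z\,d\lambda(z)$ along the fibers of $\pi\circ\Phi$. The one object that makes everything go through is, for $\varphi\in C_c(X)$, the function
$$
g_\varphi(y)=\int_X\varphi\,d\nu_y .
$$
By item (iii) of Theorem~\ref{corcoar} it is continuous, and since each $\nu_y$ is concentrated on $\Phi^{-1}(y)$ it vanishes off the compact set $\Phi(\supp\varphi)$; hence $g_\varphi\in C_c(Y)$. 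I would establish this at the outset and use it repeatedly.

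First I would define $\mu_z$ and check scalar integrability. For fixed $z$, the map $\varphi\mapsto\int_Y g_\varphi\,d\tau_z$ is a positive linear functional on $C_c(X)$: positivity and linearity are immediate, and finiteness holds because $g_\varphi\in C_c(Y)$ while $\tau_z$ is Radon, so $\{\nu_y\}$ is scalarly $\tau_z$-integrable for \emph{every} $z$. Riesz representation then yields the Radon measure $\mu_z=\int_Y\nu_y\,d\tau_z(y)$. For the concentration claim I would invoke the disintegration identity (Theorem~\ref{th:2}) in the Borel-set form $\mu_z(E)=\int_Y\nu_y(E)\,d\tau_z(y)$: taking $E=X\setminus\Phi^{-1}(\pi^{-1}(z))$, for $\tau_z$-almost every $y$ one has $y\in\pi^{-1}(z)$, whence $\Phi^{-1}(y)\subseteq\Phi^{-1}(\pi^{-1}(z))$ and therefore $\nu_y(E)\le\nu_y(X\setminus\Phi^{-1}(y))=0$; the integral vanishes and $\mu_z$ is concentrated on the set $\Phi^{-1}(\pi^{-1}(z))$, which is $H$-invariant by \eqref{PHI}.

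The covariance \eqref{mucov} is the one genuine computation. Rewriting \eqref{alfabeta} as $\nu_y^h=\alpha(h)\beta(h)\,\nu_{h^{-1}[y]}$ and using \eqref{shiftmeasure} (on $X$, and in the analogous form on $Y$), for $\varphi\in C_c(X)$ I get
$$
\int_X\varphi\,d\mu_z^h=\int_Y\Bigl(\int_X\varphi\,d\nu_y^h\Bigr)d\tau_z(y)=\alpha(h)\beta(h)\int_Y g_\varphi(h^{-1}[y])\,d\tau_z(y)=\alpha(h)\beta(h)\int_Y g_\varphi\,d\tau_z^h .
$$
For almost every $z$ the measure $\tau_z$ is relatively invariant with $\tau_z^h=\alpha(h)^{-1}\tau_z$ by \eqref{mackey_inv}, and the two characters collapse to the single factor $\beta(h)$, giving $\mu_z^h=\beta(h)\mu_z$ for all $h$. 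This is exactly where the ``almost every $z$'' of the statement is forced, since the relative invariance of $\tau_z$ holds only off a $\lambda$-null set.

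Finally I would close the transitivity loop. Since $z\mapsto\int_X\varphi\,d\mu_z=\int_Y g_\varphi\,d\tau_z$ with $g_\varphi\in C_c(Y)$, the scalar $\lambda$-integrability of $\{\mu_z\}$ is precisely the scalar $\lambda$-integrability of $\{\tau_z\}$ applied to $g_\varphi$, which is part of Theorem~\ref{Tmackey}; integrating in $z$ and chaining the two disintegrations gives $\int_Z\bigl(\int_X\varphi\,d\mu_z\bigr)d\lambda(z)=\int_Y g_\varphi\,dy=\int_X\varphi\,dx$, i.e.\ \eqref{mudis}. I expect the only real friction to be bookkeeping inside the Bourbaki framework --- justifying the Borel-set form of $\mu_z(E)=\int\nu_y(E)\,d\tau_z$ and the measurability statements uniformly in $z$, all of which I would pull from the disintegration results recalled in the Appendix (Theorem~\ref{th:2}) --- rather than the conceptual content, which is simply the formal composition of the two disintegrations.
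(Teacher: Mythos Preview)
Your proof is correct and takes a slightly different, more direct route than the paper. The paper first invokes Bourbaki's disintegration theorem for the composite map $\pi\circ\Phi:X\to Z$ to produce a family $\{\mu_z\}$ satisfying \eqref{mudis} and concentrated on the right fibers automatically; it then identifies this abstractly given family with $\int_Y\nu_y\,d\tau_z(y)$ for almost every $z$, which forces a countable-density argument in $C_c(X)$ to make the exceptional null set independent of the test function~$\varphi$. You proceed in the opposite order: you \emph{define} $\mu_z$ as the composition and verify all properties directly. The decisive observation that makes your route clean --- and which the paper does not exploit --- is that $g_\varphi(y)=\int_X\varphi\,d\nu_y$ actually lies in $C_c(Y)$ (continuous by Theorem~\ref{corcoar}(iii), supported in the compact set $\Phi(\supp\varphi)$); the paper uses only $g_\varphi\in L^1(Y)$ and therefore needs the extra uniformity step. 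Your approach buys simplicity and even yields scalar $\tau_z$-integrability of $\{\nu_y\}$ for \emph{every} $z$, not just almost every; the paper's approach has the minor advantage that concentration on $\Phi^{-1}(\pi^{-1}(z))$ comes for free from the abstract disintegration rather than requiring the separate Borel-set check you sketch. The covariance computation \eqref{mucov} is essentially identical in both proofs.
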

\begin{proof}
The map $\pi\circ\Phi$ is a Lebesgue measurable map from $X$ to $ Z$ and
$\lambda$ is a pseudo-image measure of the Lebesgue measure restricted
to $X$ under $\pi\circ\Phi$ by construction of $\lambda$ and Assumption~1. Hence, Theorem~2 Ch.~VI \S~3.3
of~\cite{bourbaki_int} yields a family $\{\mu_z\}$  of positive measures on $X$ such
that each $\mu_z$ is concentrated on $\Phi^{-1}(\pi^{-1}(z))$ and, for
all $\varphi\in C_c(X)$
\begin{equation}
\int_X \varphi(x) dx =\int_ Z\bigl(\int_X
\varphi(x)d\mu_z(x)\bigr)d\lambda(z).\label{muprime}
\end{equation}
For any fixed $\varphi\in C_c(X)$,   $y\mapsto
\int_X \varphi(x)d\nu_y(x)$ is Lebesgue integrable by (ii) of Theorem~\ref{corcoar}.
Hence, appealing to (ii) of Theorem~\ref{Tmackey} and to (iii) of Theorem~\ref{th:2}, we know that for almost all $z\in  Z$,  the map $y\mapsto\int_X \varphi(x)d\nu_y(x)$ is
$\tau_z$-integrable,    the map $z\mapsto \int_Y (\int_X
\varphi(x)d\nu_y(x) )d\tau_z(y)$ is $\lambda$-integrable, and
\[
\int_ Z\bigl( \int_Y \bigl(\int_X \varphi(x)d\nu_y(x)
\bigr)d\tau_z(y)\bigr)d\lambda(z) = \int_Y (\int_X
\varphi(x)d\nu_y(x)) dy =\int_X \varphi(x)dx.
\]
Comparing this with~\eqref{muprime} we infer that for almost every $z\in  Z$
\begin{equation}
\int_Y \bigl(\int_X \varphi(x)d\nu_y(x)
\bigr)d\tau_z(y)= \int_X \varphi(x)d\mu_z(x).
\label{munutau}
\end{equation}
The set $N$ of $z\in Z$ where the above inequality does not
hold is $\lambda$-negligible and, can be chosen independently of $\varphi$.
Indeed, as explained in Footnote~\ref{separa} we may find a countable subset $\mathcal S$ of $C_c(X)$ such that, for any
$\varphi\in C_c(X)$, there is a sequence
$(\varphi_i)$ in $\mathcal S$ converging to $\varphi$ uniformly and
    $\lvert \varphi_i\rvert\leq \lvert \varphi_0\rvert$ for all $i$. For each $\varphi\in\mathcal S$ 
there is a negligible  set $N_\varphi\subset  Z$ such that the map $y\mapsto \int_X\varphi(x)\,d\nu_y(x)$  is
integrable with respect to $\tau_z$ for all $z\not\in N_\varphi$. Denote by $N$ the  $\lambda$-negligible set
$\cup_{\varphi\in\mathcal  S}N_\varphi$.
We now claim that the family $\{\nu_y\}$ is scalarly
integrable with respect to $\tau_{z}$ for all $z\not\in N$.  Indeed, given $\varphi\in
C_c(X)$,  there is a sequence
$(\varphi_i)$ in $\mathcal S$ converging to $\varphi$ uniformly and
    $\lvert \varphi_i\rvert\leq \lvert \varphi_0\rvert$ for all $i$. Write \eqref{munutau} for each $\varphi_i$.
    Since  $\lvert \varphi_i\rvert\leq \lvert \varphi_0\rvert$ we may apply the dominated convergence theorem to the right hand side. As for the left hand side, for the same reason we may apply the dominated convergence theorem to the inner integral. Further, since $y\mapsto \nu_y({\rm supp}\,\varphi_0)$  is $\tau_{z}$-integrable we may apply  dominated convergence to the outer integral. The claimed independence of $\varphi$ is proved.

Hence for all $z\not\in N$, the family $\{\nu_y\}$ is scalarly
integrable with respect to $\tau_z$ and
$\mu_z=\int_Y\nu_y\,d\tau_z(y)$. Finally, fix $z\not\in N$ and $h\in H$.  For all $\varphi\in C_c(X)$
\begin{align*}
  \int_X\varphi(h^{-1}.x)d\mu_{z}(x)
  &= \int_Y \left(\int_X\varphi(h^{-1}.x)\,d\nu_y(x) \right) \,d\tau_{z}(y) \\
 & = \alpha(h)\beta(h)\int_Y \left(\int_X\varphi(x) \,d\nu_{h^{-1}[y]}(x)
  \right) \,d\tau_{z}(y)\\
& = \beta(h)\int_Y \left(\int_X\varphi(x) \,d\nu_{y}(x)
  \right) \,d\tau_{z}(y)\\
  &= \beta(h)\int_X\varphi(x)d\mu_{z}(x)
\end{align*}
where the second line is due to the change of variables $x\mapsto h.x$
and~\eqref{alfabeta}, and the third line to $y\mapsto h.y$
and~\eqref{mackey_inv}.  This proves that $ \mu_{z}^h=\beta(h)\mu_{z}$.
\end{proof}

By virtue of Proposition~\ref{double} we may consider the Hilbert space $L^2(X,\mu_{z})$ for almost every $z\in Z$. Whenever $\mu_z$ is not defined, we redefine $\tau_z=0$ and $\mu_z=0$, and  set $L^2(X,\mu_{z})=\{0\}$. Proposition~\ref{directintegral} below, or equation~\eqref{L2}, both based on Proposition~\ref{double}, will allow the following  Hilbert space identifications
\begin{align}
L^2(X)& = \int_{ Z} L^2(X,\mu_{z})\,d\lambda(z)   && f=
  \int_{ Z} f_{z} \,d\lambda(z)\label{idemackey}\\
 L^2(X,\mu_{z} )& = \int_Y L^2(X,\nu_y)\,d\tau_{z}(y)  &&
 f_{z}=
 \int_Y f_{z,y}\,d\tau_{z}(y), \label{idecoarea}
\end{align}
where $f\in L^2(X)$, $f_{z}\in L^2(X,\mu_{z})$ for all $z\in  Z$ and, fixed $z$, $f_{z,y}\in L^2(X,\nu_y)$ for all
$y\in Y$. The integrals of Hilbert spaces are direct integrals with
respect to the measurable field associated with $C_c(X)$, and the
integral of functions are  scalar integrals of
vector valued functions taking value in $M(X)$. Indeed, as explained in the Appendix, we shall regard $L^2(X)$, $L^2(X,\mu_{z})$ and $L^2(X,\nu_y)$ as  subspaces of $M(X)$ in the
natural way. In particular, if $f\in C_c(X)$, $f_{z}$ is the
restriction of $f$ to $\Phi^{-1}(\pi^{-1}(z))$ and   $f_{z,y}$ is the restriction to $\Phi^{-1}(y)$.  Furthermore, for any
$f\in L^2(X)$ 
 \begin{equation}
\|f\|^2=\int_{ Z}\int_{Y}\|f_{z,y}\|_{\nu_{y}}^2
\,d\tau_{z}(y)\,d\lambda(z).
\label{decompose}
\end{equation}
Formula \eqref{idemackey} induces the following decomposition of $U$.
\begin{lemma}\label{Udirect}
The representation $U$  is the direct integral of the family $\{U_z\}$
of representations acting on $L^2(X,\mu_z)$ by
\[
(U_{z,g} f)(x) =\beta(h)^{-\frac{1}{2}} e^{-2\pi i \scal{\Phi(x)}{a}}f(h^{-1}.x)
\]
for $g=(a,h)\in G$ and $f\in L^2(X,\mu_z)$. 
\end{lemma}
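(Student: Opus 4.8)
The plan is to verify, for almost every $z\in Z$, that the stated formula genuinely defines a unitary representation $U_z$ of $G$ on $L^2(X,\mu_z)$, and then to recognize $U$ as the corresponding direct integral by testing the defining identity on the dense subspace $C_c(X)$. First I would check that each $U_{z,g}$ is a well-defined operator on $L^2(X,\mu_z)$. Since $\mu_z$ is concentrated on the set $\Phi^{-1}(\pi^{-1}(z))$, which is $H$-invariant by Proposition~\ref{double}, the map $f\mapsto f(h^{-1}.\,\cdot\,)$ preserves $L^2(X,\mu_z)$, so the formula makes sense. For $f\in L^2(X,\mu_z)$ and $g=(a,h)$ the phase factor has modulus one, so by \eqref{shiftmeasure} and the relative invariance \eqref{mucov} of Proposition~\ref{double},
\begin{align*}
\|U_{z,g}f\|_{\mu_z}^2 &= \beta(h)^{-1}\int_X|f(h^{-1}.x)|^2\,d\mu_z(x)\\
&=\beta(h)^{-1}\int_X|f(x)|^2\,d\mu_z^h(x)=\|f\|_{\mu_z}^2.
\end{align*}
Hence $U_{z,g}$ is isometric, and the same computation applied to $g^{-1}$ shows it is onto, so it is unitary. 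That $g\mapsto U_{z,g}$ is multiplicative follows verbatim from the pointwise computation in the proof of Proposition~\ref{mock_prop}, which uses only the intertwining identity \eqref{PHI} and the cocycle property of $\beta$; these hold pointwise and therefore $\mu_z$-almost everywhere. Strong continuity is obtained exactly as in Proposition~\ref{mock_prop}, with $dx$ replaced by $\mu_z$ in the dominated convergence argument.

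Next I would pin down the compatibility between the $G$-action and the decomposition \eqref{idemackey}. The point is that for $f\in C_c(X)$ the function $U_gf$ again lies in $C_c(X)$, and, recalling that $f_z$ is the restriction of $f$ to $\Phi^{-1}(\pi^{-1}(z))$, one has
$$(U_{z,g}f_z)(x)=\beta(h)^{-\frac{1}{2}}e^{-2\pi i\scal{\Phi(x)}{a}}f(h^{-1}.x)=(U_gf)(x)=(U_gf)_z(x)$$
for $\mu_z$-almost every $x$, so that $(U_gf)_z=U_{z,g}f_z$ for almost every $z$. Since $U_gf\in C_c(X)$, the field $z\mapsto U_{z,g}f_z=(U_gf)_z$ is one of the measurable vector fields generating the measurable structure associated with $C_c(X)$; letting $f$ range over a fundamental sequence in $C_c(X)$, this shows that $z\mapsto U_{z,g}$ is a measurable field of operators, with $\|U_{z,g}\|=1$ for almost every $z$.

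Finally, the bounded decomposable operator $\int_Z U_{z,g}\,d\lambda(z)$ is therefore well defined, and by construction its $z$-component is $U_{z,g}$. The identity $(U_gf)_z=U_{z,g}f_z$ established above shows that $U_g$ and $\int_Z U_{z,g}\,d\lambda(z)$ agree on the dense subspace $C_c(X)$ of $L^2(X)$; being bounded, they coincide on all of $L^2(X)$, whence $U_g=\int_Z U_{z,g}\,d\lambda(z)$ for every $g\in G$, as claimed. I expect the main obstacle to be precisely the compatibility step of the second paragraph: one must ensure that fiberwise restriction commutes with the $G$-action, and this is exactly where the $H$-invariance of $\Phi^{-1}(\pi^{-1}(z))$ together with the covariance \eqref{mucov} are indispensable. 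The measurability and density arguments are then routine, given the measurable structure associated with $C_c(X)$ recalled in the Appendix.
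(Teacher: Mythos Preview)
Your proof is correct and follows essentially the same approach as the paper's: both verify that each $U_{z,g}$ is unitary using the relative invariance \eqref{mucov} (the paper simply cites Proposition~\ref{mock_prop} together with the fact that $\mu_z$ and $dx$ share the character $\beta$), establish measurability of the field via $C_c(X)$, and then test the direct integral identity on $C_c(X)$. The only cosmetic difference is that the paper checks measurability by showing $z\mapsto\scal{U_{z,g}\varphi}{\varphi'}_{\mu_z}$ is $\lambda$-integrable (since $U_g\varphi\cdot\overline{\varphi'}\in C_c(X)$ and $\{\mu_z\}$ is scalarly integrable), whereas you observe directly that $U_g\varphi\in C_c(X)$ so $(U_g\varphi)_z$ is itself a measurable vector field; these are equivalent formulations.
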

\begin{proof}
 For each $z\in Z$,  the map $g\mapsto U_{z,g}$ is a strongly continuous unitary
 representation of $G$ by the same proof of
 Proposition~\ref{mock_prop} since $\mu_z$ and the Lebesgue measure
 are both  relatively invariant with the same character $\beta$,
 (compare~\eqref{beta} with \eqref{mucov}).  We now prove that
 $\{U_z\}$ is a $\lambda$-measurable field of representations. Indeed,
 for any $g\in G$ and $\varphi,\varphi'\in C_c(X)$, 
\[ \scal{U_{z}\varphi}{\varphi'}_{\mu_z}= \int_X
\beta(h)^{-\frac{1}{2}} e^{-2\pi i \scal{\Phi(x)}{a}}
\varphi(h^{-1}.x)\overline{\varphi'(x)}\,d\mu_z(x).
\]
Since $x\mapsto e^{-2\pi i \scal{\Phi(x)}{a}}
\varphi(h^{-1}.x)\overline{\varphi'(x)}$ is a compactly supported
continuous function and the family $\{\mu_z\}$ is $\lambda$-scalarly
integrable, the map  $x\mapsto\scal{U_{z}\varphi}{\varphi'}_{\mu_z}$ is $\lambda$-integrable,
hence $\lambda$-measurable. \\
Finally, to prove that $U=\int_Z U_z\,dz$ it is enough to test the
equality on $C_c(X)$. For any $g\in G$ and $\varphi\in C_c(X)$, we
regard $U_g \varphi$ and $U_{z,g}\varphi$ as elements of
$M(X)$. Hence, \eqref{mudis} gives
\[
U_g \varphi \cdot dx = \int_Z ( U_g\varphi \cdot \mu_z )dz =\int_Z (
U_{z,g}\varphi \cdot \mu_z )dz
\]
by definition of $U_z$.
\end{proof}
The next technical lemma is needed in order  to prove that $U_z$ is equivalent to an induced representation.
\begin{lemma}\label{T}
Fix $y\in Y$ and $h\in H$. The map
 $T_{y,h}:L^2(X,\nu_y)\to  L^2(X,\nu_{h[y]})$
 defined  for $\nu_{h[y]}$-almost every $x\in X$ by
$$ 
(T_{y,h} f)(x)=\sqrt{\alpha(h^{-1})\beta(h^{-1})} f(h^{-1}.x)\qquad
$$
is a unitary operator. Furthermore,  for every $h,h'\in H$ and every $y\in Y$
\begin{align}
 T_{h[y],h'} T_{y,h} & =T_{y,h'h} \label{cociclo}\\
    T_{y,h}^{-1} & = T_{h[y],h^{-1}}.\label{cociclo2}
\end{align}
\end{lemma}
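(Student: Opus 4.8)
The plan is to verify directly that $T_{y,h}$ is a well-defined unitary operator and then check the two algebraic identities \eqref{cociclo} and \eqref{cociclo2} by straightforward substitution. First I would show that $T_{y,h}$ maps $L^2(X,\nu_y)$ isometrically onto $L^2(X,\nu_{h[y]})$. The key computation is the change of variables governed by the covariance relation \eqref{alfabeta}. Given $f\in L^2(X,\nu_y)$, one computes
\begin{align*}
\|T_{y,h}f\|_{\nu_{h[y]}}^2
&=\alpha(h^{-1})\beta(h^{-1})\int_X |f(h^{-1}.x)|^2\,d\nu_{h[y]}(x)\\
&=\alpha(h^{-1})\beta(h^{-1})\int_X |f(x)|^2\,d\nu^{h}_{h[y]}(x)\\
&=\alpha(h^{-1})\beta(h^{-1})\,\alpha(h)\beta(h)\int_X |f(x)|^2\,d\nu_{y}(x)
=\|f\|_{\nu_y}^2,
\end{align*}
where the second equality uses the definition \eqref{shiftmeasure} of the shifted measure $\nu^h_{h[y]}$ (with the substitution $x\mapsto h.x$) and the third uses \eqref{alfabeta}. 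Since $\alpha,\beta$ are characters, $\alpha(h^{-1})\beta(h^{-1})\alpha(h)\beta(h)=1$, giving the isometry. This shows $T_{y,h}$ is well defined as a map into $L^2(X,\nu_{h[y]})$ and preserves the norm.

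Next I would observe that \eqref{cociclo2} is in fact a special case of \eqref{cociclo}: setting $h'=h^{-1}$ in \eqref{cociclo} gives $T_{h[y],h^{-1}}T_{y,h}=T_{y,\id}$, and $T_{y,\id}$ is plainly the identity on $L^2(X,\nu_y)$ since the scalar factor is $1$ and the action of the identity is trivial. Reversing the roles shows $T_{y,h}T_{h[y],h^{-1}}$ is the identity on $L^2(X,\nu_{h[y]})$. Hence once \eqref{cociclo} is established, $T_{y,h}$ is surjective with inverse $T_{h[y],h^{-1}}$, which simultaneously proves unitarity and \eqref{cociclo2}. So the substantive remaining task is the cocycle identity \eqref{cociclo}.

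For \eqref{cociclo} I would simply compose the two operators and track the scalar prefactors together with the iterated action. Writing out $\bigl(T_{h[y],h'}(T_{y,h}f)\bigr)(x)$ for $\nu_{(h'h)[y]}$-almost every $x$, the first application contributes the factor $\sqrt{\alpha(h'^{-1})\beta(h'^{-1})}$ and evaluates its argument at $h'^{-1}.x$, and the inner operator contributes $\sqrt{\alpha(h^{-1})\beta(h^{-1})}$ and evaluates at $h^{-1}.(h'^{-1}.x)$. Since $H$ acts on $X$ and $h^{-1}.(h'^{-1}.x)=(h'h)^{-1}.x$, and since $\alpha,\beta$ are multiplicative so that $\sqrt{\alpha(h'^{-1})\beta(h'^{-1})}\sqrt{\alpha(h^{-1})\beta(h^{-1})}=\sqrt{\alpha((h'h)^{-1})\beta((h'h)^{-1})}$, the composition equals $f\mapsto\sqrt{\alpha((h'h)^{-1})\beta((h'h)^{-1})}\,f((h'h)^{-1}.x)$, which is exactly $(T_{y,h'h}f)(x)$.

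I do not anticipate a genuine obstacle here: the lemma is a routine bookkeeping statement about a cocycle of intertwining isometries. The only point requiring mild care is the consistent handling of the almost-everywhere qualifiers — each $T_{y,h}$ is defined up to $\nu_{h[y]}$-null sets — and making sure the equalities \eqref{cociclo} and \eqref{cociclo2} are asserted in the appropriate target spaces $L^2(X,\nu_{(h'h)[y]})$ and $L^2(X,\nu_{h[y]})$ respectively. The essential analytic input, namely the covariance \eqref{alfabeta}, has already been established in Theorem~\ref{corcoar}, so the proof reduces to the multiplicativity of $\alpha\beta$ and the associativity of the $H$-action.
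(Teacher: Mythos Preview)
Your proposal is correct and follows essentially the same approach as the paper: establish the isometry via \eqref{alfabeta}, verify the cocycle identity \eqref{cociclo} by direct computation using the multiplicativity of $\alpha,\beta$ and the associativity of the action, and then deduce surjectivity and \eqref{cociclo2} from \eqref{cociclo} by taking $h'=h^{-1}$. The paper's proof is slightly terser but the logic is identical.
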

\begin{proof}
 Given a Borel measurable function $f$ which is square-integrable with
 respect to $\nu_{y}$, the map $x\mapsto (T_{y,h} f)(x)$ is also Borel measurable and it is square-integrable with respect to $\nu_{h[y]}$ since
$$ \alpha(h^{-1})\beta(h^{-1})\int_X \lvert f(h^{-1}.x)\rvert^2\,d\nu_{h[y]}(x)= \int_X
\lvert f(x)\rvert^2\,d\nu_{y}(x),$$
by the change of variables $x\mapsto h.x$
and~\eqref{alfabeta}. The above equation implies that $T_{y,h}$ is a
well-defined isometry from  $L^2(X,\nu_y)$ to
$L^2(X,\nu_{h[y]})$. Equality~\eqref{cociclo} is clear and, as a
consequence, $ T_{h[y],h^{-1}}T_{y,h}=T_{y,e}$ is the identity on
$L^2(X,\nu_{h[y]})$ so that $T_{y,h}$ is surjective, thereby
showing~\eqref{cociclo2}.
\end{proof}
For any $z\in\pi(Y)$, we fix an origin $y_0$ in the orbit
$\pi^{-1}(z)=H[y_0]$ and we denote by  $H_{z}$ the
stabilizer at $y_0$.
We denote by $\cK_{z}=L^2(X,\nu_{y_0})$.

By \eqref{alfabeta} we know that
$\nu_{y_0}$ is  relatively invariant under   $H_{z}$. It follows that
it make sense to look at the quasi-regular representation $\Lambda_{z}$ of $H_{z}$
acting on $\cK_{z}$, whose value at $s\in H_{z}$ is $\Lambda_{z,s}= T_{y_0,s}$.  As usual, we extend   $\Lambda_{z}$ to a representation of $\R^n\rtimes
  H_{z}$ by setting $\Lambda_{z,a} = e^{-2\pi i
    \scal{y_0}{a}} \id$ for all $a\in \R^n$. Finally, we denote by $W_{z}$ the representation of $G$ unitarily induced by
    $\Lambda_{z}$ from $\R^n\rtimes H_{z}$ to $G$.  We realize $W_{z}$ as a representation acting on the space $\cH_{z}$ of those  functions $F:G\to \cK_{z}$ that satisfy
    \begin{enumerate}
    \item[(K1)] $F$ is $dg$-measurable;
     \item[(K2)] For all $g\in G$ and $(a,s)\in \R^n\rtimes H_{z}$
$$
F(gas)= \sqrt{\alpha(s^{-1})}\ \Lambda_{z,as}^{-1}\, F(g);
$$
   \item[(K3)] $\displaystyle{\lVert F\rVert^2_{\cH_{z}} := \int_{Y} \lVert
       F(h(y))\rVert^2_{\cK_{z}}\,\alpha(h(y)) d\tau_{z}(y)}<+\infty$. 
      \end{enumerate}
Here  $h(y)\in H$ is any element in $H$ that satisfies  $h(y)[y_0]=y$
for $\tau_z$- almost all $y\in Y$. Since $\tau_{z}$ is concentrated on
$H[y_0]$, it is enough to define $h(y)$ for $y\in H[y_0]$ and, due to
the covariance property in (K2), the integral does not depend on the
choice of $h(y)$ in the coset $hH_{z}$. Furthermore, (K2) implies that
it is enough to know these functions on $H$. 
 Two functions $F$ and $F'$ are identified if $\lVert F-F'\rVert^2_{\cH_{z}} =0$. 
 The induced representation on $\cH_{z}$ is defined  for $g\in G$ by the equality
$$ (W_{z,g}F)(g')=F(g^{-1}g')$$
valid for $dg$-almost every $g'\in G$. 

For the sake of precision,  if $z\in Z\setminus\pi(Y)$ we put $\nu_z=0$, $\cK_z=\{0\}$
and $H_z=\{e\}$; recall that $\tau_z=0$ and that $\lambda(Z\setminus\pi(Y))=0$.

\begin{lemma}\label{S} Fix $z\in  Z$ such that $\tau_z\neq0$. The map 
$S_{z}:L^2(X,\mu_{z})\to \cH_{z}$ whose value at 
$ f_{z}=\int_Y f_{z,y}\,d\tau_{z}(y)$ is given by
$$ 
(S_{z} f_{z})(a,h)=\sqrt{\alpha(h^{-1})}\ e^{2\pi i
\scal{h[y_0]}{a}} \,\,T_{y_0,h}^{-1}( f_{z,h[y_0]})
$$
is a unitary operator intertwining $U_z$ with $S_z$.
\end{lemma}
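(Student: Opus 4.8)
The plan is to establish in turn that $S_z$ lands in $\cH_z$, is isometric, is onto, and intertwines $U_z$ with $W_z$ (the last clause of the statement should read ``intertwining $U_z$ with $W_z$''). The two recurring tools will be the cocycle relations \eqref{cociclo}--\eqref{cociclo2} for the unitaries $T_{y,h}$ of Lemma~\ref{T} and the relative invariance \eqref{alfabeta} of the fibre measures, together with the facts that $\alpha,\beta$ are characters (so $\alpha(h^{-1})=\alpha(h)^{-1}$, $\beta(h^{-1})=\beta(h)^{-1}$) and the defining identity \eqref{contra} of the contragredient action.

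First I would verify that $S_zf_z\in\cH_z$. Measurability (K1) follows from measurability of the field $y\mapsto f_{z,y}$ and joint continuity of the action. For the covariance (K2) I would take $g=(b,k)\in G$ and $(a,s)\in\R^n\rtimes H_z$, use the group law $(b,k)(a,s)=(b+k^\dagger[a],ks)$, and substitute into the definition of $S_zf_z$. Using $s[y_0]=y_0$, the factorisation $T_{y_0,ks}=T_{y_0,k}T_{y_0,s}=T_{y_0,k}\Lambda_{z,s}$ from \eqref{cociclo}, the character property of $\alpha$, and the identity $\scal{k[y_0]}{k^\dagger[a]}=\scal{y_0}{a}$ coming from \eqref{contra}, the expression collapses to $\sqrt{\alpha(s^{-1})}\,\Lambda_{z,(a,s)}^{-1}(S_zf_z)(b,k)$, which is exactly (K2).

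Next comes the isometry. Evaluating at a representative $(0,h(y))$ with $h(y)[y_0]=y$ gives $(S_zf_z)(0,h(y))=\sqrt{\alpha(h(y)^{-1})}\,T_{y_0,h(y)}^{-1}(f_{z,y})$, and since $T_{y_0,h(y)}^{-1}=T_{y,h(y)^{-1}}$ is unitary its $\cK_z$-norm squared is $\alpha(h(y)^{-1})\|f_{z,y}\|_{\nu_y}^2$. Inserting this into (K3), the weight $\alpha(h(y))$ cancels $\alpha(h(y)^{-1})$ and I obtain $\|S_zf_z\|_{\cH_z}^2=\int_Y\|f_{z,y}\|_{\nu_y}^2\,d\tau_z(y)=\|f_z\|_{\mu_z}^2$ by \eqref{idecoarea}. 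Surjectivity then follows by inverting: for $F\in\cH_z$ set $f_{z,y}=\sqrt{\alpha(h(y))}\,T_{y_0,h(y)}(F(0,h(y)))$. The covariance (K2) together with \eqref{cociclo}--\eqref{cociclo2} shows this is independent of the choice of $h(y)$ in its $H_z$-coset, so it is a well-defined measurable field, and the norm computation read backwards shows $f_z=\int_Y f_{z,y}\,d\tau_z(y)\in L^2(X,\mu_z)$ with $S_zf_z=F$.

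The intertwining relation $S_zU_{z,g}=W_{z,g}S_z$ is the heart of the matter and where I expect the real work. The key preliminary is the fibrewise description of $U_z$: since on $C_c(X)$ the fibre $f_{z,y}$ is the restriction of $f_z$ to $\Phi^{-1}(y)$, a direct computation from \eqref{mock} rewritten through $T$ yields
\[
(U_{z,(a,h)}f_z)_y=\sqrt{\alpha(h)}\,e^{-2\pi i\scal{y}{a}}\,T_{h^{-1}[y],h}\,f_{z,h^{-1}[y]},
\]
where the scalar $\sqrt{\alpha(h)}$ arises from $\beta(h)^{-1/2}\bigl(\alpha(h^{-1})\beta(h^{-1})\bigr)^{-1/2}$ after using that $\alpha,\beta$ are characters. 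This identity, valid first on $C_c(X)$ and extended by density against the disintegration $\mu_z=\int_Y\nu_y\,d\tau_z(y)$, is the main obstacle. Granting it, I would evaluate $(S_zU_{z,(a,h)}f_z)(b,k)$ by setting $y=k[y_0]$, and $(W_{z,(a,h)}S_zf_z)(b,k)=(S_zf_z)(g^{-1}(b,k))$ with $g^{-1}(b,k)=\bigl((h^{-1})^\dagger[b-a],\,h^{-1}k\bigr)$. The two sides then match factor by factor: the operators coincide because $T_{y_0,h^{-1}k}^{-1}=T_{y_0,k}^{-1}T_{(h^{-1}k)[y_0],h}$ by \eqref{cociclo}--\eqref{cociclo2}; the scalars coincide because $\alpha(k^{-1})\alpha(h)=\alpha\bigl((h^{-1}k)^{-1}\bigr)$; and the characters coincide because \eqref{contra} gives $\scal{(h^{-1}k)[y_0]}{(h^{-1})^\dagger[b-a]}=\scal{k[y_0]}{b-a}$, which is precisely the product of the two exponentials produced on the left.
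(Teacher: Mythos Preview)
Your proposal is correct and follows essentially the same architecture as the paper's proof: verify (K1)--(K3), build the inverse explicitly for surjectivity, and then check the intertwining. The one genuine methodological difference is in the intertwining step. The paper checks $S_z U_{z,g} = W_{z,g} S_z$ separately on the two generating pieces $g=a\in\R^n$ and $g=k\in H$, which keeps each computation short (the first uses only that $\Phi(x)=h[y_0]$ on the fibre, the second only the cocycle identities for $T$). You instead first derive the fibrewise formula $(U_{z,(a,h)}f_z)_y=\sqrt{\alpha(h)}\,e^{-2\pi i\scal{y}{a}}\,T_{h^{-1}[y],h}\,f_{z,h^{-1}[y]}$ and then match both sides for a general $(a,h)$ at once. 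Your route is slightly more conceptual (it isolates how $U_z$ acts on the direct integral \eqref{idecoarea}) at the cost of one extra identity to establish; the paper's route is quicker but hides that structure. Both are valid, and you have also correctly spotted the typo in the statement (``$W_z$'' rather than ``$S_z$'').
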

\begin{proof}
For any $(a,h)\in G$,   $f_{z,h[y_0]}\in L^2(X,\nu_{h[y_0]})$. Hence $T_{y_0,h}^{-1}( f_{z,h[y_0]})\in \cK_{z}$.
In order to prove that  $S_{z} f_{z}$ is $dg$-measurable it is enough to
show that
$$ 
h\mapsto  \scal{T_{y_0,h}^{-1}( f_{z,h[y_0]})}{\varphi}_{\cK_{z}}=\sqrt{\alpha(h)\beta(h)}\int_X  f_{z,h[y_0]}(h.x) \varphi(x)d\nu_{y_0}(x)
 $$
is $dh$-measurable for every $\varphi\in C_c(X)$ because $C_c(X)$ is a
dense subspace of the separable Hilbert  space $\cK_{z}$.
Since $f_{z}=\int_Y f_{z,y}\,d\tau_{z}(y)$, there exists a
square-integrable function $\tilde{f}:X\to\C$ and a
$\tau_{z}$-negligible set $N\subset Y$ such that, for all $y\not\in N$,  
$\tilde f$ belongs to the equivalence class of $f_{z,y}\in
L^2(X,\nu_y)$.  Define $N'=\{h\in H\mid h[y_0]\in N\}$, a negligible
set with respect to the Haar measure $dh$ because, by
\eqref{mackey_inv},  $\tau_{z}$ is  non-zero relatively invariant on the orbit $H[y_0]$. Then for all $h\not\in N'$
\begin{align*}
h&\mapsto\sqrt{\alpha(h)\beta(h)}\int_X  f_{z,h[y_0]}(h.x) \varphi(x)d\nu_{y_0}(x)\\
& = \sqrt{\alpha(h)\beta(h)}\int_X  \tilde f(h.x) \varphi(x)d\nu_{y_0}(x),
\end{align*}
which is clearly $dh$-measurable. Next we prove the covariance property (K2). For $g=(a,h)=ah$
and $(b,s)=bs\in\R^n\rtimes H_{z}$,
\begin{align*}
 (S_{z} f_{z})(ahbs)&=  (S_{z} f_{z})(a+h^\dag[b],hs)\\
  &=\sqrt{\alpha(h^{-1})\alpha(s^{-1})}\ e^{2\pi i
  \scal{hs[y_0]}{a+h^\dag[b]}} \,\,T_{y_0,hs}^{-1}( f_{z,hs[y_0]}) \\
&= \sqrt{\alpha(s^{-1})}\ e^{2\pi i \scal{h[y_0]}{h^\dag[b]}}
\,\,T_{y_0,s}^{-1}(S_{z} f_{z})(a,h) \\
& =\sqrt{\alpha(s^{-1})}\ e^{2\pi i \scal{y_0}{b}}
\,\,\Lambda_{y_0,s}^{-1}(S_{z} f_{z})(a,h)
\end{align*}
by definition of $h^\dag$ and $\Lambda_{z}$. Further,
\begin{align*}
   \int_{Y} \lVert
      (S_{z} f_{z})(h(y))\rVert^2_{\cK_{z}}\,\alpha(h(y)) d\tau_{z}(y) & = \int_{Y}  \lVert
      T_{y_0,h(y)}^{-1}( f_{z,h(y)[y_0]})
\rVert^2_{\cK_{z}} d\tau_{z}(y)  \\
  & = \int_{Y}  \lVert  f_{z,y}
\rVert^2_{\nu_y} d\tau_{z}(y)  = \int_X \lvert f(x)\rvert^2 d\mu_{z}(x),
\end{align*}
whence (K3). This also shows that  $S_{z}$ is an isometry from $L^2(X,\mu_{z})$ into
$\cH_{z}$. \\
Finally we prove that $S_{z}$ is surjective.  Given $F\in
\cH_{z}$, for all $h\in H$ define
$$ 
f_{z, h}=  \sqrt{\alpha(h)} \,\,T_{y_0,h}( F(h))\in L^2(X,\nu_{h[y_0]}).
$$
Since $F$ satisfies (K2), it follows that $f_{z,
  hs}=f_{z, h}$. For $\varphi\in C_c(X)$ the map
$$
h\mapsto \sqrt{\alpha(h)}  \scal{T_{y_0,h}(
  F(h))}{\varphi}_{\nu_{h[y_0]}}=
\sqrt{\alpha(h)}  \scal{F(h)}{\varphi^{h}}_{\cK_{z}}
$$
is $dh$-measurable since $h\mapsto F(h)$ is $dh$-measurable from $H$
into $\cK_{z}$ and the map $h\mapsto\sqrt{\alpha(h)}\varphi^h$ is continuous from $H$ into
$\cK_{z}$. Therefore
$$ \int_Y  \lVert f_{z, h(y)}\rVert^2_{\nu_y}\,d\tau_{z}(y)=
\int_{Y} \lVert
       F(h(y))\rVert^2_{\cK_{z}}\,\alpha(h(y)) d\tau_{z}(y)<+\infty.$$
It follows that $f_{z}=\int_Y f_{z,h(y)}\,d\tau_{z}(y)$
is in $\int_Y L^2(X,\nu_y)\,d\tau_{z}(y)=L^2(X,\mu_{z})$ and,
by construction, $S_{z}f_{z}=F$.\\
Finally, we check the intertwining property on the dense subset $C_c(X)$ of $L^2(X)$.  
If $g=a\in\R^n$,
 for any $\varphi\in C_c(X)$  and for almost every $h\in H$
\begin{align*}
  (S_{z} (U_a\varphi))(h) & = \sqrt{\alpha(h^{-1})}\,
  T_{y_0,h}^{-1}\left(e^{-2\pi i\scal{\Phi(\cdot)}{a}}\varphi\right) \\
& =  \sqrt{\alpha(h^{-1})} \,e^{-2\pi i\scal{h[y_0]}{a}} T_{y_0,h}^{-1}\varphi\\
  & = ( S_{z}\varphi)(-a,h)=( S_{z}\varphi)(a^{-1}h)
\end{align*}
where, in the second line, we have used $\Phi(x)=h[y_0]$ for $\nu_{h[y_0]}$-almost
every $x\in X$. If $g=k\in H$, 
 \begin{align*}
  (S_{z} (U_k\varphi))(h) & = \sqrt{\alpha(h^{-1})}\,
  T_{y_0,h}^{-1}\left(\sqrt{\beta(k^{-1})}\varphi^{k}\right) \\
& =  \sqrt{\alpha(h^{-1})}\sqrt{\alpha(k)} T_{y_0,h}^{-1}
(T_{k^{-1}h[y_0],k}\varphi) \\
& =  \sqrt{\alpha((k^{-1}h)^{-1})} \left(
  T_{k^{-1}h[y_0],k}^{-1}T_{y_0, h}\right)^{-1}\varphi \\
& =\sqrt{\alpha((k^{-1}h)^{-1})} \left(
  T_{h[y_0],k^{-1}}T_{y_0, h}\right)^{-1}\varphi \\
& =\sqrt{\alpha((k^{-1}h)^{-1})} \left(
  T_{y_0, k^{-1}h}\right)^{-1}\varphi= (S_{z}\varphi)(k^{-1}h).
\end{align*}
Since two functions in $\cH_{z}$ that are equal for almost every $h\in H$,  
are equal  almost everywhere in  $G$, the intertwining  is proved.
\end{proof}
Recall that $L^2(X)=\int_{ Z} L^2(X,\mu_{z})\,dz$,
where the direct integral is defined by the measurable structure
associated with any fixed dense countable family $\{\varphi_k\}$  in
$C_c(X)$. Clearly,  $z\mapsto \{S_{z}\varphi_k\}$ is a measurable
structure for the family $\{\cH_{z}\}$, and we define the direct
integral 
$\cH=\int_{ Z} \cH_{z}\,dz$. 
\begin{theorem}\label{intertwine}
 The map $S:L^2(X)\to \cH$
$$ S f =  \int_{ Z} S_{z} f_{z} \,dz
\qquad f = \int_{ Z} f_{z} \,dz$$ 
is a unitary map intertwining the mock metaplectic representation $U$
with the unitary representation $W$ of $G$ acting on $\cH$ given by
$$
W=\int_{ Z} W_{z}\,dz.
$$
\end{theorem}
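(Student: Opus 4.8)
The plan is to assemble the fiberwise results of Lemmas~\ref{Udirect} and~\ref{S} into a single statement about the two direct integrals, exploiting the fact that the measurable structure on $\{\cH_z\}$ was defined precisely so as to make $S$ carry the fundamental sequence of $\int_Z L^2(X,\mu_z)\,dz$ onto that of $\int_Z\cH_z\,dz$. In essence the theorem asserts that a measurable field of unitary operators, each intertwining the corresponding fibers of two representations, integrates to a global unitary intertwiner; once the measurability bookkeeping is settled the rest is formal.

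First I would check that $S$ is well defined, i.e. that $z\mapsto S_z f_z$ is a measurable vector field in $\{\cH_z\}$ for every $f=\int_Z f_z\,dz\in L^2(X)$. By definition of the left-hand direct integral, $z\mapsto f_z$ is measurable exactly when $z\mapsto \scal{f_z}{(\varphi_k)_z}_{\mu_z}$ is measurable for each element $\varphi_k$ of the fixed dense countable family in $C_c(X)$. Since each $S_z$ is unitary by Lemma~\ref{S},
\[
\scal{S_z f_z}{S_z\varphi_k}_{\cH_z}=\scal{f_z}{(\varphi_k)_z}_{\mu_z},
\]
and because $z\mapsto\{S_z\varphi_k\}$ is by construction the fundamental sequence for $\{\cH_z\}$, this shows $z\mapsto S_z f_z\in\cH$. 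Hence $S$ is a well-defined linear map.

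Next I would establish that $S$ is unitary. Isometry is immediate from the fiberwise isometry of $S_z$ together with \eqref{decompose}, since
\[
\|Sf\|_{\cH}^2=\int_Z\|S_z f_z\|_{\cH_z}^2\,dz=\int_Z\|f_z\|_{\mu_z}^2\,dz=\|f\|^2.
\]
For surjectivity, given $F=\int_Z F_z\,dz\in\cH$ I would set $f_z=S_z^{-1}F_z$; running the measurability argument above backwards, now using that $S_z^{-1}$ is unitary, shows that $z\mapsto f_z$ is a measurable field with $\int_Z\|f_z\|_{\mu_z}^2\,dz=\|F\|^2<+\infty$, so that $f=\int_Z f_z\,dz$ lies in $L^2(X)$ and $Sf=F$.

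Finally, for the intertwining I would combine Lemma~\ref{Udirect}, which gives $U_g f=\int_Z U_{z,g}f_z\,dz$, with the fiberwise intertwining $S_z U_{z,g}=W_{z,g}S_z$ of Lemma~\ref{S}, obtaining
\[
S U_g f=\int_Z S_z U_{z,g} f_z\,dz=\int_Z W_{z,g}\,S_z f_z\,dz=W_g\,Sf,
\]
the last equality being the definition of $W=\int_Z W_z\,dz$ evaluated on the decomposable vector $Sf$. This computation also shows that $\{W_z\}$ is genuinely a measurable field of representations, equivalently that $W$ is a bona fide direct integral, since $W_{z,g}=S_z U_{z,g}S_z^{-1}$ is the conjugate of the measurable field $\{U_z\}$ by the unitary field $\{S_z\}$ that defines the measurable structure on $\{\cH_z\}$. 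The only point requiring genuine care, and hence the main obstacle, is precisely the measurability matching between the two direct integrals carried out in the first step; once that is in place, the remaining assertions are the routine passage from fiberwise to global statements in the direct-integral theory recalled in the Appendix.
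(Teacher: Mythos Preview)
Your proposal is correct and follows essentially the same approach as the paper, which gives only a one-sentence proof pointing to Lemma~\ref{Udirect}, Lemma~\ref{S}, and the definition of the measurable structure on $\{\cH_z\}$. You have simply spelled out the standard direct-integral bookkeeping that the paper leaves implicit.
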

\begin{proof}
The statement follows from
the definition of the measurable structure for the direct
  integral $\int_{ Z} \cH_{z}\,dz$,  from Lemma~\ref{Udirect} and
  Lemma~\ref{S}.
 \end{proof}

\subsection{Admissible vectors}
We are in a position to state our main result. We need, however, a last disintegration formula, sometimes referred to as Weil's formula (see e.g.~\cite{fol95}),  a rather straightforward consequence of the theory of quasi-invariant measures on homogeneous spaces.
The easiest way of formulating it is perhaps that  for any $\varphi\in C_c(H)$ the following integral formula holds
\begin{equation}
\int_H \varphi(h) \alpha(h^{-1}) dh = \int_{Y} \left(\int_{H_{z}}
\varphi(h(y)s) ds\right)d\tau_{z}(y),
\label{weil1}
\end{equation}
where $ds$ is a suitable Haar measure on the stabilizer $H_{z}$ and where as before  $h(y)\in H$ is any element  that satisfies  $h(y)[y_0]=y$ for  $\tau_{z}$-almost every $y\in Y$. We  interpret~\eqref{weil1} along the same lines of thought  that we have followed for the other formulae by writing
\begin{equation}
  \label{weil2}
\alpha^{-1}\cdot dh = \int_{Y} (ds)^{h(y)^{-1}} d\tau_{z}(y)  
\end{equation}
as an equality of measures on $H$. This time $ds$ is regarded as a measure on $H$ concentrated on $H_{z}$, so that the translated measure $(ds)^{h(y)^{-1}}$ is concentrated on $h(y)H_{z}$. As usual, we shall extend \eqref{weil1} to $L^1$-functions by means of Theorem~\ref{th:2}. By Theorem~2 (and the comments below)  in Ch.~VII \S~3.5 of \cite{bourbaki_intII},
for all $s\in H_{z}$ the modular functions of $H$ and $H_{z}$ are related by the formula
\begin{equation}
  \label{weil3}
  \alpha^{-1}(s)=\frac{\Delta_{H_{z}}(s)}{\Delta_{H}(s)}.
\end{equation}

Theorem~\ref{intertwine} establishes that $U$ and $W$ are equivalent. Therefore, we formulate our necessary and sufficient condition for the existence of admissible vectors of $U$ for those of $W$. 
Thus, any admissible vector $F\in\cH$ for $W$ is to be thought of as the image under $S: L^2(X)\to\cH$ of an analyzing wavelet $\eta$. 

\begin{theorem}\label{adm2} The function 
 $F=\int F_{z}\,d\lambda(z)$ is an admissible vector for $W$ if and only if
for almost every $z\in  Z$ and for every $u\in \cK_{z}=L^2(X,\nu_{y_0})$
\begin{equation}
\|u\|_{\cK_{z}}^2=\int_{Y}\Bigl(\int_{H_{z}}|\scal{u}{\Lambda_{z,s}
\left(F_{z}\Delta_G^{-1/2}\right)(h(y))}_{\cK_{z}}|^2\,ds\Bigr)
\alpha(h(y))\,d\tau_{z}(y).
\label{SECOND}
\end{equation}
\end{theorem}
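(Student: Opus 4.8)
The plan is to transport the admissibility criterion through the unitary equivalence $S$ and the fiber criterion of Theorem~\ref{adm1}, and then to rewrite the resulting integral over $H$ as an iterated integral over $Y$ and $H_z$ by means of Weil's formula \eqref{weil1}. Since $S$ intertwines $U$ with $W$ (Theorem~\ref{intertwine}), $F=\int_Z F_z\,d\lambda(z)$ is admissible for $W$ if and only if $\eta=S^{-1}F$ is admissible for $U$, in which case $F_z=S_z\eta_z$. By Theorem~\ref{adm1}, $\eta$ is admissible for $U$ exactly when the fiber identity \eqref{FIRST} holds at some point of $\pi^{-1}(z)$ for $\lambda$-almost every $z$, and since \eqref{FIRST} is invariant along the orbit I may test it at the chosen origin $y_0=o(z)$, where $\cK_z=L^2(X,\nu_{y_0})$. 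It therefore suffices to prove, for each fixed $z$ with $\tau_z\neq0$, that \eqref{FIRST} at $y_0$ and \eqref{SECOND} are equivalent conditions on $u\in\cK_z$.

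First I would express the pairing in \eqref{FIRST} through $F_z$. Here $\eta_{y_0}^h$ is, as in Corollary~\ref{density}, the restriction to $\Phi^{-1}(y_0)$ of the global translate $\eta(h^{-1}.\,\cdot)$; on that fiber it equals $\eta_{h^{-1}[y_0]}(h^{-1}.\,\cdot)$, so by the definition of $T_{y,h}$ in Lemma~\ref{T} it coincides with $T_{h^{-1}[y_0],h}(\eta_{h^{-1}[y_0]})$ up to the explicit scalar $\sqrt{\alpha(h^{-1})\beta(h^{-1})}$. Using that the $T_{y,h}$ are unitary, the cocycle relations \eqref{cociclo}--\eqref{cociclo2}, and the formula for $S_z$ in Lemma~\ref{S} to substitute $\eta_{h^{-1}[y_0]}$ in terms of $F_z(0,h^{-1})$, the factors of $\alpha$ collapse (because $\alpha(h)\alpha(h^{-1})=1$) and leave
\[
\scal{u}{\eta_{y_0}^h}_{\nu_{y_0}}=\sqrt{\beta(h)}\,\scal{u}{F_z(0,h^{-1})}_{\cK_z}.
\]
Inserting this into \eqref{FIRST} cancels $\beta(h)$ against the weight $\alpha(h)^{-1}\beta(h)^{-1}$; the change of variable $h\mapsto h^{-1}$ then turns the surviving weight into $\alpha(h)\Delta_H(h)^{-1}=\Delta_G(0,h)^{-1}$ by \eqref{modularG}, and absorbing half of this modular factor into the inner product recasts \eqref{FIRST} as $\|u\|_{\cK_z}^2=\int_H|\scal{u}{(F_z\Delta_G^{-1/2})(0,h)}_{\cK_z}|^2\,dh$.

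Next I would disintegrate this integral over $H$ along the stabilizer. Applying Weil's formula \eqref{weil1} with $\varphi(h)=\alpha(h)\,|\scal{u}{(F_z\Delta_G^{-1/2})(0,h)}_{\cK_z}|^2$ splits $\int_H$ as $\int_Y\int_{H_z}$, with $\alpha(h(y))\,d\tau_z(y)$ outside and an extra $\alpha(s)\,ds$ inside. The covariance property (K2) for elements of $\cH_z$ rewrites $F_z(0,h(y)s)$ as $\sqrt{\alpha(s^{-1})}\,\Lambda_{z,s}^{-1}F_z(0,h(y))$, while \eqref{weil3} gives $\Delta_G(0,h(y)s)=\Delta_G(0,h(y))\,\Delta_{H_z}(s)$, so that the factor $\Delta_G^{-1/2}$ evaluated at $h(y)s$ becomes its value at $h(y)$ times $\Delta_{H_z}(s)^{-1/2}$; all powers of $\alpha(s)$ then cancel, leaving the inner integrand $\Delta_{H_z}(s)^{-1}|\scal{u}{\Lambda_{z,s}^{-1}(F_z\Delta_G^{-1/2})(h(y))}_{\cK_z}|^2$. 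A final substitution $s\mapsto s^{-1}$ in the $H_z$-integral cancels $\Delta_{H_z}$ and replaces $\Lambda_{z,s}^{-1}$ by $\Lambda_{z,s}$ (since $\Lambda_z$ is a representation), yielding exactly \eqref{SECOND}. As each of these steps is reversible, the two implications of the theorem follow simultaneously.

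The main obstacle I anticipate is the bookkeeping of the four multiplicative factors $\alpha$, $\beta$, $\Delta_H$ and $\Delta_{H_z}$ through the two inversion substitutions and Weil's formula: the entire content of the statement is that these conspire to cancel down to the single clean weight $\alpha(h(y))\,ds\,d\tau_z(y)$ of \eqref{SECOND}, so a stray power of $\alpha$ or a misuse of the modular identity \eqref{weil3} would destroy the identity. A lesser point is the clarification that $\eta_{y_0}^h$ means $(\eta^h)_{y_0}$ rather than $(\eta_{y_0})^h$ --- without this the fiber pairing would be supported off $\Phi^{-1}(y_0)$ --- together with the observation that, because the passage from \eqref{FIRST} to \eqref{SECOND} is a pointwise-in-$z$ rewriting valid for every $z$ with $\tau_z\neq0$, the $\lambda$-almost-everywhere quantifier of Theorem~\ref{adm1} is preserved automatically.
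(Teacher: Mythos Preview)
Your proposal is correct and follows essentially the same route as the paper: both start from Theorem~\ref{adm1} at the origin $y_0$, rewrite $(\eta^h)_{y_0}$ as $\sqrt{\beta(h)}\,F_z(0,h^{-1})$ via the definition of $S_z$, invert $h\mapsto h^{-1}$, apply Weil's formula~\eqref{weil1}, use the covariance~(K2) together with~\eqref{weil3}, and finish with the substitution $s\mapsto s^{-1}$ in $H_z$. The only cosmetic difference is that you fold $\Delta_G^{-1/2}$ into $F_z$ before applying Weil's formula, whereas the paper carries the weight $\alpha^2/\Delta_H$ through and identifies it with $\alpha/\Delta_G$ at the end; the bookkeeping is identical.
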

\begin{proof}  
By the definition of $T$ given in  Lemma~\ref{T}, for every $h\in H$ and $y_0\in Y$ 
$$
T_{y_0,h^{-1}}(\eta^h)_{z,y_0}(x)
=\sqrt{\alpha(h)\beta(h)}(\eta^h)_{z,h^{-1}[y_0]}(h.x)
=\sqrt{\alpha(h)\beta(h)}\eta_{z,h^{-1}[y_0]}(x)
$$
holds for any $\eta=\int_Z\int_Y\eta_{z,y}\,d\tau_z(y)\,d\lambda(z)\in L^2(X)$ and hence
$$
(\eta^h)_{z,y_0}
=\sqrt{\alpha(h)\beta(h)}\left(T_{y_0,h^{-1}}\right)^{-1}\eta_{z,h^{-1}[y_0]}.
$$
Suppose now  that $\eta$ is an admissible vector for $U$ or, equivalently, that $F=S\eta$ is such for $W$. By Theorem~\ref{adm1}, what we have just established and the definition of $S$ given in Lemma~\ref{S},  for almost every $z\in Z$ and any fixed   $y_0\in\pi^{-1}(z)$
\begin{align}
\|u\|^2_{\cK_{z}}&=\int_H|\scal{u}{(\eta^h)_{z,y_0}}|^2\frac{dh}{\alpha(h)\beta(h)}\label{firstline}\\
&=\int_H|\scal{u}{\sqrt{\alpha(h)\beta(h)}
\left(T_{y_0,h^{-1}}\right)^{-1}\eta_{z,h^{-1}[y_0]}}|^2\frac{dh}{\alpha(h)\beta(h)}\nonumber\\
&=\int_H|\scal{u}{S_{z}\eta_{z}(h^{-1})}|^2\frac{dh}{\alpha(h)}\nonumber\\
&=\int_H|\scal{u}{F_{z}(h^{-1})}|^2\frac{dh}{\alpha(h)}\nonumber\\
(h\mapsto h^{-1})\hskip0.3truecm&=\int_H|\scal{u}{F_{z}(h)}|^2\Delta_H(h^{-1})\alpha(h)\,dh.\nonumber
\end{align}
Hence, applying~\eqref{weil1}, the covariance property (K2),  \eqref{weil3} and \eqref{modularG} we obtain
\begin{align*}
\|u\|^2_{\cK_{z}}
&=\int_{Y}\left(\int_{H_{z}}|\scal{u}{F(h(y)s)}|^2\frac{\alpha^2(h(y)s)}{\Delta_H(h(y)s))}\,ds\right)
\,d\tau_{z}(y)\\
&=\int_{Y}\left(\int_{H_{z}}|\scal{u}{\sqrt{\alpha(s^{-1})}\Lambda_{{z},s^{-1}}F(h(y))}|^2\frac{\alpha^2(h(y)s)}{\Delta_H(h(y)s))}\,ds\right)
\,d\tau_{z}(y)\\
&=\int_{Y}\left(\int_{H_{z}}|\scal{u}{\Lambda_{{z},s^{-1}}F(h(y))}|^2
\frac{\alpha^2(h(y))}{\Delta_H(h(y))}\Delta_{H_{z}}(s^{-1})\,ds\right)
\,d\tau_{z}(y)\\
(s\mapsto s^{-1})\hskip0.3truecm&=\int_{Y}\left(\int_{H_{z}}|\scal{u}{\Lambda_{{z},s}F(h(y))}|^2\frac{1}{\Delta_G(h(y))}\,ds\right)
\,\alpha(h(y))d\tau_{z}(y),
\end{align*}
which is \eqref{SECOND}. Conversely, if \eqref{SECOND} holds for some $F\in\cH$, then reading the above strings of equalities backwards yields the first line in \eqref{firstline}.
Therefore, by Theorem~\ref{adm1}, $\eta$ is admissible for $U$, hence $F$ is such for $W$.
\end{proof}

\begin{corollary}\label{finiteness}
Assume that $U$ is a reproducing representation and suppose that  $z\in Z$ is such that  \eqref{SECOND} holds true. Then:
\begin{enumerate}
\item[(i)] if $\Phi^{-1}(y_0)$ is a finite set for some $y_0\in\pi^{-1}(z)$,   then the stabilizer $H_{y}$ is compact for every $y\in\pi^{-1}(z)$;
\item[(ii)] if $G$ is unimodular and the stabilizer $H_{y}$ is compact, then  $\Phi^{-1}(y)$ is a finite set, hence $n=d$.
\end{enumerate}
\end{corollary}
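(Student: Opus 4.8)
The plan is to reduce both statements to a single ``trace identity'' obtained by summing the frame-type equality \eqref{SECOND} over an orthonormal basis of $\cK_z$. Write $\Psi_z(y)=(F_z\Delta_G^{-1/2})(h(y))\in\cK_z$ and let $\{e_i\}$ be an orthonormal basis of $\cK_z$. Applying \eqref{SECOND} with $u=e_i$ and summing over $i$, the monotone convergence theorem (all summands being non-negative) lets me interchange the summation with the integrals over $Y$ and $H_z$; Parseval's identity together with the unitarity of $\Lambda_{z,s}$ (Lemma~\ref{T}) then collapses the inner sum $\sum_i|\scal{e_i}{\Lambda_{z,s}\Psi_z(y)}|^2$ to $\|\Lambda_{z,s}\Psi_z(y)\|_{\cK_z}^2=\|\Psi_z(y)\|_{\cK_z}^2$. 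Since the integrand no longer depends on $s$, the $H_z$-integral merely produces the total Haar mass $\int_{H_z}ds$, and I arrive at
\begin{equation}
\dim\cK_z=\Bigl(\int_{H_z}ds\Bigr)\,I_z,\qquad I_z:=\int_Y\|\Psi_z(y)\|_{\cK_z}^2\,\alpha(h(y))\,d\tau_z(y),
\label{traceid}
\end{equation}
with the convention $\dim\cK_z=+\infty$ when $\cK_z$ is infinite dimensional.

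Before exploiting \eqref{traceid} I record three facts. First, $\cK_z=L^2(X,\nu_{y_0})$ is finite dimensional if and only if the fiber $\Phi^{-1}(y_0)$ is finite, and in that case $n=d$: by Lemma~\ref{regular} the fiber is a $(d-n)$-dimensional Riemannian submanifold, so if $d-n\geq1$ the measure $\nu_{y_0}$ is non-atomic and $\cK_z$ is infinite dimensional, whereas $d-n=0$ forces $\Phi^{-1}(y_0)$ to be discrete with $\nu_{y_0}$ a sum of positive atoms, so finite dimensionality of $\cK_z$ is equivalent to finiteness of the fiber. Second, for $y=h[y_0]$ the map $x\mapsto h.x$ is a bijection of $\Phi^{-1}(y_0)$ onto $\Phi^{-1}(y)$ and conjugates $H_{y_0}$ onto $H_y$, so all fibers over the orbit $\pi^{-1}(z)$ have the same cardinality and all stabilizers are conjugate, hence simultaneously compact or not. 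Third, since $\cK_z\neq\{0\}$ the left-hand side of \eqref{traceid} is at least $1$, which forces $I_z>0$. Recall also that a locally compact group is compact precisely when its Haar measure is finite, i.e. $H_z$ is compact if and only if $\int_{H_z}ds<+\infty$.

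For part (i), the hypothesis that one fiber over $\pi^{-1}(z)$ is finite makes every fiber over the orbit finite, so $\cK_z$ is finite dimensional and the left-hand side of \eqref{traceid} is finite. As $I_z\in(0,+\infty]$ and $\int_{H_z}ds\in(0,+\infty]$, their product can only be finite if both factors are finite; in particular $\int_{H_z}ds<+\infty$, so $H_z=H_{y_0}$ is compact. By the conjugacy of stabilizers along the orbit, $H_y$ is compact for every $y\in\pi^{-1}(z)$.

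For part (ii), the unimodularity of $G$ gives $\Delta_G\equiv1$ through \eqref{modularG}, whence $\Psi_z(y)=F_z(h(y))$ and
\[
I_z=\int_Y\|F_z(h(y))\|_{\cK_z}^2\,\alpha(h(y))\,d\tau_z(y)=\|F_z\|_{\cH_z}^2<+\infty
\]
by (K3). Compactness of $H_y$, equivalently of $H_z$, gives $\int_{H_z}ds<+\infty$, so the right-hand side of \eqref{traceid} is finite and $\cK_z$ is finite dimensional; by the first preliminary remark $\Phi^{-1}(y_0)$ is then finite and $n=d$. The only genuinely delicate points are the justification of \eqref{traceid} (the interchange of summation and integration, harmless by positivity) and the equivalence ``$\cK_z$ finite dimensional $\Longleftrightarrow$ fiber finite'', which rests on the submanifold structure of the fibers recorded in Lemma~\ref{regular}; everything else is bookkeeping with the Haar measures $ds$ and $dh$.
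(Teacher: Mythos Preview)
Your proof is correct and follows essentially the same route as the paper: sum \eqref{SECOND} over an orthonormal basis of $\cK_z$, use Parseval and the unitarity of $\Lambda_{z,s}$ to obtain the trace identity $\dim\cK_z=\bigl(\int_{H_z}ds\bigr)\cdot I_z$, and then read off both conclusions from the finiteness or positivity of the two factors. You supply more detail than the paper does (the justification of the sum--integral interchange, the equivalence between finiteness of the fiber and finite-dimensionality of $\cK_z$ via the $(d-n)$-dimensional submanifold structure, and the conjugacy of stabilizers along the orbit), but the argument is the same.
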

\begin{proof} Clearly, it is enough to prove (i) and (ii) for the origin $y_0$. Take a (countable) Hilbert basis $\{u_i\}$ of $\cK_{z}$. Apply \eqref{SECOND} to each element of the basis and sum
\begin{align}
\dime\cK_{z}
&=\int_{Y}\Bigl(\int_{H_{z}}\sum_i|\scal{u_i}{\Lambda_{z,s}
\left(F_{z}\Delta_G^{-1/2}\right)(h(y))}_{\cK_{z}}|^2\,ds\Bigr)
\alpha(h(y))\,d\tau_{z}(y)\nonumber\\
&=\int_{Y}\Bigl(\int_{H_{z}}
\lVert\Lambda_{z,s}\left(F_{z}\Delta_G^{-1/2}\right)(h(y))\rVert_{\cK_{z}}^2\,ds\Bigr)
\alpha(h(y))\,d\tau_{z}(y)\nonumber\\
&=\Bigl(\int_{H_{z}}ds\Bigr)
\int_{Y}\lVert F_{z}\Delta_G^{-1/2}(h(y))\rVert_{\cK_{z}}^2
\alpha(h(y))\,d\tau_{z}(y).\label{formaldegree}
\end{align} 
Now, if  $\Phi^{-1}(y_0)$ is a finite set, then the left hand side is finite and strictly positive, hence so is the right hand side, so $H_{z}$ has finite volume. This proves (i).
If $\Delta_G=1$ and  $H_{z}$ has finite volume, then the right hand side is finite and strictly positive by (K3). Hence $\Phi^{-1}(y_0)$ is a finite set and since it is a regular submanifold of dimension $d-n$, necessarily $n=d$. Thus (ii) holds. 
\end{proof}


\newcommand{\mf}{{m_{\mh}}}
\newcommand{\IS}{\cN}

\subsection{Compact stabilizers} \label{sec-compatto}
As a preliminary step, we  assume that the stabilizer $H_{z}$ of a given $z\in Z$  is
compact, hence such is any other stabilizer in the same orbit.  Later we shall assume that this is the case for almost every orbit.\\
The compactness of the stabilizer allows us to use Schur's orthogonality relations for computing the inner integral over $H_{z}$ in~\eqref{SECOND}. Indeed, 
since $H_{z}$ is compact, the representation $\Lambda_{z}$ is completely reducible.
Hence, for each equivalence class ${\hat s}$
in the dual group  $\widehat{H}_{z}$,  we can choose a closed subspace
$\cK_{z,{\hat s}}\subset \cK_{z}$ such that the restriction
$\Lambda_{z,{\hat s}}$ of $\Lambda_{z}$ to $\cK_{z,{\hat s}}$ belongs to ${\hat s}$, and we
denote by $m_{\hat s}$ the multiplicity of ${\hat s}$ in  $\Lambda_{z}$ (with the convention that $\cK_{z,{\hat s}}=0$  if $m_{\hat s}=0$).
The following direct decomposition in primary  inequivalent representations holds true
\begin{equation}
\cK_{z}\simeq\bigoplus_{{\hat s}\in \widehat{H}_{z}} \cK_{z,{\hat s}}
\otimes \mathbb C^{m_{\hat s}}
\qquad
\Lambda_{z} \simeq \bigoplus_{{\hat s}\in \widehat{H}_{z}}
\Lambda_{z,{\hat s}}\otimes \id,\label{dec_inducente} 
\end{equation}
where we interpret $\C^{m_{\hat s}}=\ell^2$ whenever $m_{\hat s}=\aleph_0$. Furthermore, for any cardinal $m\in\{1,\ldots,\aleph_0\}$,   we denote by $\{e_j\}_{j=1}^{m}$  the canonical basis of $\C^m$.

Mackey's theorem on induced representations of semi-direct products \cite{mackey52} guarantees that each induced representation 
$\operatorname{Ind}_{\R^d\rtimes H_{z}}^G (e^{-2\pi i\scal{y_0}{\cdot}}\,  \Lambda_{z,{\hat s}})$ is  irreducible on $\cH_{z,{\hat s}}$ and gives the following direct decomposition in primary inequivalent  representations for $W_{z}$:
\begin{equation}
\cH_{z}\simeq\bigoplus_{{\hat s}\in \widehat{H}_{z}}\cH_{z,{\hat s}} \otimes \mathbb C^{m_{\hat s}}\qquad  W_{z}\simeq \bigoplus_{{\hat s}\in \widehat{H}_{z}}
\operatorname{Ind}_{\R^d\rtimes H_{z}}^G(e^{-2\pi
  i\scal{y_0}{\cdot}}\,  \Lambda_{z,{\hat s}}) \otimes \id.\label{dec_indotta}
\end{equation}
By \eqref{dec_inducente} and
\eqref{dec_indotta}, respectively, we have
\begin{align*}
F_{z}
&=\sum_{{\hat s}\in \widehat{H}_{z}}\sum_{i=1}^{m_{\hat s}}{F_{z,{\hat s},i}}\otimes e_i,
\qquad F_{z}\in\cH_{z}\\
u
&=\sum_{{\hat s}\in \widehat{H}_{z}}\sum_{i=1}^{m_{\hat s}}{u_{{\hat s},i}}\otimes e_i,
\qquad u\in\cK_{z}.
\end{align*}
We write $\vol{H_{z}}$ for the mass of  $H_{z}$ relative to the unique Haar measure $ds$ that makes formula~\eqref{weil1} work. Note that $\vol{H_{z}}$ is not necessarily one. 
\begin{proposition}\label{compatto}
Let $z\in Z$ be such that the stabilizer $H_{z}$  is compact. Given $F_{z}\in\cH_{z}$ the following facts are equivalent:
\begin{enumerate}
\item[(i)]  equality~\eqref{SECOND} holds true for all $u\in\cK_{z}$;
\item[(ii)] for all ${\hat s}\in \widehat{H}_{z}$ such that $m_{\hat s}\neq 0$, and for all $i,j=1,\ldots,m_{\hat s}$
\begin{align}\label{degreeOP}
  \int_{Y} \scal{F_{z,{\hat s},i}(h(y))}{F_{z,{\hat s},j}(h(y))}_{\cK_{z,{\hat s}}}
  \frac{\alpha(h(y))}{\Delta_G(h(y))} d\tau_{z}(y) =
  \frac{\dim{\cK_{z,{\hat s}}}}{\vol{H_{z}}} \delta_{ij}.
\end{align}
\end{enumerate}
\end{proposition}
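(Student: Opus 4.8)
The plan is to substitute the primary decompositions \eqref{dec_inducente} and \eqref{dec_indotta} into the right-hand side of \eqref{SECOND} and to carry out the inner integral over the compact group $H_{z}$ by Schur's orthogonality relations. Writing $\Psi=F_{z}\Delta_G^{-1/2}$ and expanding $\Psi(h(y))=\sum_{\hat s}\sum_i\Psi_{\hat s,i}(h(y))\otimes e_i$ with $\Psi_{\hat s,i}(h(y))=F_{z,\hat s,i}(h(y))\Delta_G(h(y))^{-1/2}\in\cK_{z,\hat s}$, and $u=\sum_{\hat s}\sum_i u_{\hat s,i}\otimes e_i$, the mutual orthogonality of the summands $\cK_{z,\hat s}\otimes\C^{m_{\hat s}}$ together with $\Lambda_{z,s}=\bigoplus_{\hat s}\Lambda_{z,\hat s,s}\otimes\id$ gives the splitting
\begin{equation*}
\scal{u}{\Lambda_{z,s}\Psi(h(y))}_{\cK_{z}}=\sum_{\hat s}\sum_i\scal{u_{\hat s,i}}{\Lambda_{z,\hat s,s}\Psi_{\hat s,i}(h(y))}_{\cK_{z,\hat s}}.
\end{equation*}

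Next I would take the square modulus and integrate in $s$ over $H_{z}$, using the Schur orthogonality relation
\begin{equation*}
\int_{H_{z}}\scal{\Lambda_{z,\hat s,s}a}{b}\;\overline{\scal{\Lambda_{z,\hat t,s}a'}{b'}}\,ds
=\frac{\vol{H_{z}}}{\dime\cK_{z,\hat s}}\,\delta_{\hat s\hat t}\,\scal{a}{a'}_{\cK_{z,\hat s}}\,\scal{b'}{b}_{\cK_{z,\hat s}},
\end{equation*}
which holds because $H_{z}$ is compact, so each $\Lambda_{z,\hat s}$ is finite dimensional, and because the $\Lambda_{z,\hat s}$ are pairwise inequivalent irreducibles; the weight $\vol{H_{z}}$ records that $ds$ is the normalization forced by \eqref{weil1}, which need not have total mass one. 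All cross terms with $\hat s\neq\hat t$ vanish, leaving, after multiplication by $\alpha(h(y))$, integration against $d\tau_{z}(y)$, and reinserting $\Psi_{\hat s,i}=F_{z,\hat s,i}\Delta_G^{-1/2}$, that the right-hand side of \eqref{SECOND} equals
\begin{equation*}
\sum_{\hat s}\frac{\vol{H_{z}}}{\dime\cK_{z,\hat s}}\sum_{i,k}\Bigl(\int_Y\scal{F_{z,\hat s,k}(h(y))}{F_{z,\hat s,i}(h(y))}\frac{\alpha(h(y))}{\Delta_G(h(y))}\,d\tau_{z}(y)\Bigr)\scal{u_{\hat s,i}}{u_{\hat s,k}}.
\end{equation*}

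Finally I would compare this with $\|u\|_{\cK_{z}}^2=\sum_{\hat s}\sum_i\|u_{\hat s,i}\|_{\cK_{z,\hat s}}^2$. Since the expression above is block diagonal in $\hat s$ and the vectors $u_{\hat s,i}$ range freely over $\cK_{z,\hat s}$, the validity of \eqref{SECOND} for every $u$ is equivalent to the equality, on each block $(\cK_{z,\hat s})^{m_{\hat s}}$, of two Hermitian forms: testing on $u_{\hat s,i}=\delta_{i i_0}w$ forces the diagonal Gram entries to equal $\tfrac{\dime\cK_{z,\hat s}}{\vol{H_{z}}}$, and testing on $u_{\hat s,i}=\delta_{i i_0}w+\delta_{i j_0}w'$ forces the off-diagonal entries to vanish; this is exactly \eqref{degreeOP}, and reading the chain backwards gives (ii)$\Rightarrow$(i). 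The main obstacle will be the legitimacy of interchanging the (possibly countably infinite) sums over $\hat s$ and over the multiplicity indices with the integrals $\int_{H_{z}}ds$ and $\int_Y d\tau_{z}$: for each fixed $y$ the partial sums converge in $L^2(H_{z},ds)$ to $s\mapsto\scal{u}{\Lambda_{z,s}\Psi(h(y))}$, which licenses the term-by-term application of Schur's relations, while the outer interchange and the passage from the dense set of finitely supported $u$ to all of $\cK_{z}$ are handled exactly by the closed-operator argument already used for $\mathcal W_y$ in the proof of Theorem~\ref{adm1}.
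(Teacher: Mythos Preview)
Your approach is essentially the same as the paper's: both expand $u$ and $F_{z}$ along the primary decomposition, apply Schur's orthogonality relations to evaluate the inner $H_{z}$-integral, and then test the resulting identity of Hermitian forms first on $u$ with a single nonzero component $u_{\hat s,i}$ (diagonal condition) and then on $u=u_{\hat s,i}\oplus u_{\hat s,j}$ (off-diagonal vanishing). Your treatment is slightly more explicit in tracking the $\Delta_G^{-1/2}$ factor and in noting the density/closed-operator step needed to pass from finitely supported $u$ to all of $\cK_{z}$; the paper glosses over both, but the argument is the same.
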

\begin{proof}
Take $u\in\cK_{z}$. We compute the inner integral in \eqref{SECOND} using Schur's orthogonality relations. For $\tau_{z}$-almost every $y\in Y$ 
\begin{align*}
\int_{H_{z}}|\scal{u}{\Lambda_{z,s} F_{z}(h(y))}_{\cK_{z}}|^2\,ds 
&=\sum_{{\hat s}\in \widehat{H}_{z}} \sum_{i,j=1}^{m_{\hat s}}
\scal{u_{{\hat s},i}}{u_{{\hat s},j}}_{\cK_{z,{\hat s}}}\times\\  
&\hskip0.4truecm\times
\scal{F_{z,{\hat s},j}(h(y))}{F_{z,{\hat s},i}(h(y))}_{\cK_{z,{\hat s}}} \frac{\vol{H_{z}}}{\dim{\cK_{z,{\hat s}}}}.
\end{align*}
Choosing $u=u_{{\hat s},i}$,~\eqref{SECOND}
is equivalent to 
\begin{equation}
 \int_{Y} \lVert F_{z,{\hat s},i}(h(y))\rVert^2_{\cK_{z,{\hat s}}}
  \frac{\alpha(h(y))}{\Delta_G(h(y))} d\tau_{z}(y) =
  \frac{ \dim{\cK_{z,{\hat s}}}}{\vol{H_{z}}}.
\label{ciccio}
\end{equation}
Choose next  $j\neq i$ and $u=u_{{\hat s},i}\oplus u_{{\hat s},j}$. Taking~\eqref{ciccio} into account,~\eqref{SECOND} is equivalent to
$$ \int_{Y} \scal{F_{z,{\hat s},i}(h(y))}{F_{z,{\hat s},j}(h(y))}_{\cK_{z,{\hat s}}}
  \frac{\alpha(h(y))}{\Delta_G(h(y))} d\tau_{z}(y) =0.
$$
 Hence (i) is equivalent to (ii). 
\end{proof}

Equation~\eqref{degreeOP} has the following interpretation in terms of the
abstract theory developed by F\"uhr \cite{fuhr05}. Indeed, for each
irreducible representation of $G$ in~\eqref{dec_indotta},
we can define the (possibly unbounded) operator $d_{z,{\hat s}}$ on $\cH_{z,{\hat s}}$
\begin{equation}
d_{z,{\hat s}}F_{z,{\hat s}}(g)=\frac{\dim \cK_{z,{\hat s}}}{\vol H_{z}}\Delta_G(g)\,F_{z,{\hat s}}(g),
\label{formaldegree2}
\end{equation}
which  satisfies (K2) precisely because the stabilizer is compact. The
operator $d_{z,{\hat s}}$ is a positive self-adjoint injective operator
semi-invariant with weight $\Delta_G^{-1}$ \cite{dumo76}. 
Now, \eqref{degreeOP} says that $F_{z,{\hat s},i}$ is in the domain of $d_{z,{\hat s}}^{-1/2}$ and 
\begin{equation}
\scal{d_{z,{\hat s}}^{-1/2}F_{z,{\hat s},i}}{d_{z,{\hat s}}^{-1/2}F_{z,{\hat s},j}}_{\cH_{z,{\hat s}}}
=\delta_{ij},
\qquad
i,j=1,\dots,m_{{\hat s}}.
\label{DEGOP2}\end{equation}
One should compare this  with Theorem~4.20 and equations (4.15) and (4.16) of \cite{fuhr05}.

\begin{corollary}\label{compatto-s}
 Let $z\in Z$ be such that the stabilizer $H_{z}$  is
 compact.  The following are equivalent:
 \begin{itemize}
 \item[(i)] there  exists $F_{z}\in\cH_z$ such that equality~\eqref{SECOND} holds true for all $u\in\cK_{z}$;
  \item[(ii)] $m_{\hat s}\leq \dim(\cH_{z,{\hat s}})$ for all ${\hat s}\in \widehat{H_{z}}$.
  \end{itemize}
 If $G$ is non-unimodular, this last condition is always satisfied.
\end{corollary}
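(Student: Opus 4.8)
The plan is to reduce statement (i) to the orthonormality condition \eqref{DEGOP2} and then read off the constraint on cardinalities. By Proposition~\ref{compatto}, the existence of $F_{z}\in\cH_{z}$ satisfying \eqref{SECOND} for all $u\in\cK_{z}$ is equivalent to the existence of $F_{z}=\sum_{{\hat s}}\sum_{i=1}^{m_{\hat s}}F_{z,{\hat s},i}\otimes e_i\in\cH_{z}$ for which \eqref{degreeOP} holds. As recorded in the discussion preceding the corollary, the operator $d_{z,{\hat s}}$ of \eqref{formaldegree2} is positive, self-adjoint and injective, so \eqref{degreeOP} says precisely that each $F_{z,{\hat s},i}$ lies in the domain of $d_{z,{\hat s}}^{-1/2}$ and that, for every ${\hat s}$ with $m_{\hat s}\neq0$, the vectors $g_{{\hat s},i}:=d_{z,{\hat s}}^{-1/2}F_{z,{\hat s},i}$, $i=1,\dots,m_{\hat s}$, form an orthonormal system in $\cH_{z,{\hat s}}$; this is \eqref{DEGOP2}. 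Thus (i) amounts to producing, for each ${\hat s}$ with $m_{\hat s}\neq0$, an orthonormal family of $m_{\hat s}$ vectors in the range of $d_{z,{\hat s}}^{1/2}$, subject to the global requirement that the assembled $F_{z}$ have finite $\cH_{z}$-norm.

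The implication (i)$\Rightarrow$(ii) is then immediate: an orthonormal system of cardinality $m_{\hat s}$ can exist in $\cH_{z,{\hat s}}$ only if $m_{\hat s}\leq\dime\cH_{z,{\hat s}}$, and this must hold for every ${\hat s}$ occurring in \eqref{dec_indotta}, i.e. for every ${\hat s}$ with $m_{\hat s}\neq0$ (for ${\hat s}$ with $m_{\hat s}=0$ the inequality is vacuous).

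For (ii)$\Rightarrow$(i) I would build $F_{z}$ block by block. Fix ${\hat s}$ with $m_{\hat s}\neq0$. Since $d_{z,{\hat s}}$ is positive, self-adjoint and injective, the subspace $\operatorname{dom}(d_{z,{\hat s}}^{1/2})$ is dense in $\cH_{z,{\hat s}}$; as $\dime\cH_{z,{\hat s}}\geq m_{\hat s}$ by (ii), a dense \emph{linear} subspace contains an orthonormal system $\{g_{{\hat s},i}\}_{i=1}^{m_{\hat s}}$ (extract $m_{\hat s}$ linearly independent vectors from the dense domain, which is possible precisely because its closure has dimension $\geq m_{\hat s}$, and apply Gram--Schmidt, the result staying in the subspace). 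Setting $F_{z,{\hat s},i}=d_{z,{\hat s}}^{1/2}g_{{\hat s},i}$ makes \eqref{DEGOP2}, hence \eqref{degreeOP}, hold for this block. The delicate point, and where I expect the real work, is to guarantee that $\lVert F_{z}\rVert_{\cH_{z}}^2=\sum_{{\hat s}}\sum_{i}\lVert F_{z,{\hat s},i}\rVert_{\cH_{z,{\hat s}}}^2<+\infty$ over the at most countably many classes ${\hat s}$. Here one exploits that $d_{z,{\hat s}}^{1/2}$ is unbounded whenever $\Delta_G$ is nonconstant: since $\Delta_G$ factors through the orbit and takes values arbitrarily close to $0$ there, the $g_{{\hat s},i}$ may be chosen supported where $\Delta_G$ is small (room for $m_{\hat s}$ orthonormal vectors being available because $\cH_{z,{\hat s}}$ is then infinite-dimensional), so that $\lVert F_{z,{\hat s},i}\rVert^2=\lVert d_{z,{\hat s}}^{1/2}g_{{\hat s},i}\rVert^2$ is as small as desired and the double series converges. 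This freedom is exactly what non-unimodularity supplies; in the unimodular case $d_{z,{\hat s}}$ is a scalar and the block norms are rigidly fixed, so the finiteness of $\lVert F_{z}\rVert_{\cH_{z}}$ becomes an additional demand that one must keep track of (compare Corollary~\ref{finiteness}).

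Finally, to see that (ii) is automatic when $G$ is non-unimodular, I would show that $\cH_{z,{\hat s}}$ is then infinite-dimensional for every ${\hat s}$ with $m_{\hat s}\neq0$, whence $m_{\hat s}\leq\aleph_0=\dime\cH_{z,{\hat s}}$ trivially. Denote by $W_{z,{\hat s},g}$ the value at $g$ of the irreducible representation on $\cH_{z,{\hat s}}$ appearing in \eqref{dec_indotta}. The operator $d_{z,{\hat s}}$ of \eqref{formaldegree2} is semi-invariant with weight $\Delta_G^{-1}$, that is $W_{z,{\hat s},g}\,d_{z,{\hat s}}\,W_{z,{\hat s},g}^{-1}=\Delta_G(g)^{-1}d_{z,{\hat s}}$ for all $g\in G$. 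Were $\cH_{z,{\hat s}}$ finite-dimensional, $d_{z,{\hat s}}$ would be positive-definite on a finite-dimensional space, hence of finite, strictly positive trace; taking traces in the semi-invariance relation and using that $W_{z,{\hat s},g}$ is unitary (so conjugation preserves the trace) would give $\trace d_{z,{\hat s}}=\Delta_G(g)^{-1}\trace d_{z,{\hat s}}$, forcing $\Delta_G\equiv1$ and contradicting non-unimodularity. Thus no finite-dimensional $\cH_{z,{\hat s}}$ occurs, (ii) holds, and by the construction above a vector $F_{z}$ realizing \eqref{SECOND} exists.
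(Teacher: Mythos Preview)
Your approach coincides with the paper's: reduce (i) to the block-wise condition \eqref{DEGOP2} via Proposition~\ref{compatto}, read off $m_{\hat s}\le\dim\cH_{z,\hat s}$ from the existence of an orthonormal system, and show $\dim\cH_{z,\hat s}=\infty$ when $G$ is non-unimodular. For this last step the paper cites Duflo--Moore (a nontrivially semi-invariant operator has unbounded spectrum); your trace argument is a clean self-contained substitute reaching the same conclusion. You also go further than the paper by addressing the global finiteness $\sum_{\hat s,i}\lVert F_{z,\hat s,i}\rVert^2<\infty$ needed for $F_z\in\cH_z$, and sketching how to secure it in the non-unimodular case by supporting the $g_{\hat s,i}$ where $\Delta_G$ is small; the paper's proof stays entirely at the block level and leaves this assembling issue to Theorems~\ref{MAIN1} and~\ref{MAIN2}. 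One small slip: ``orthonormal family \dots\ in the range of $d_{z,\hat s}^{1/2}$'' should read \emph{domain}, as you correctly write two sentences later.
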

\begin{proof}
Fix ${\hat s}\in \widehat{H_{z}}$ such that $m_{\hat s}\neq 0$.
 If $G$ is unimodular, $d_{z,{\hat s}}$ is the identity up to a
  multiplicative constant, so that  the families $\{F_{z,{\hat s},i}\}_{i=1}^{m_{\hat s}}$
  satisfying~\eqref{DEGOP2} are precisely the orthogonal families in
  $\cH_{z,{\hat s}}$ with square norm equal to
  $\dim \cK_{z,{\hat s}}/\vol H_{z}$, whose existence is
  equivalent to $m_{\hat s}\leq \dim(\cH_{z,{\hat s}})$. 
  If $G$ is non-unimodular,   $d_{z,{\hat s}}$ is a semi-invariant
  operator with weight $\Delta_G^{-1}$.  Therefore its spectrum is unbounded
  (see formula~(2) of \cite{dumo76}), so that $\dime\cH_{z,{\hat s}}=+\infty$, provided that $m_{\hat s}\neq 0$.  Hence the families
  $\{F_{z,{\hat s},i}\}_{i=1}^{m_{\hat s}}$ satisfying~\eqref{DEGOP2} are
the families in the domain of~$d_{z,{\hat s}}^{-1/2}$ that
  are orthonormal with respect to the inner product induced by
  $d_{z,{\hat s}}^{-1/2}$.
\end{proof}

If $G$ is unimodular,  (ii)~of Corollary~\ref{finiteness} implies that
$\cK_{z}$ is finite-dimensional, so that $m_{\hat s}=0$ for all but
finitely many ${\hat s}\in \widehat{H}_{z}$ for which $m_{\hat s}$ is
finite. Furthermore,   the orbit $\pi^{-1}(z)$ is often infinite, so that $\dime\cH_{z,{\hat s}}=+\infty$ and the
requirement $m_{{\hat s}}\leq\dime\cH_{z,{\hat s}}$ is trivially satisfied
for every ${\hat s}\in\widehat{H}_{z}$.

From now on we assume that almost every stabilizer $H_{z}$ is
compact. For each $z$ we can thus apply Proposition~\ref{compatto}. 
Theorem~\ref{pazzini}   provides an explicit decomposition of the
representation $W$, hence of $U$, as a direct integral of its irreducible components,
each of which is realized as induced representation of the
restriction of $\Lambda_{z}$ to a suitable (irreducible)
subspace. The result does not depend on the fact that $U$ is
reproducing.  To state the theorem, we fix a Borel (hence $\lambda$) measurable
section $o:  \pi(Y)\to Y$ whose existence is ensured by Assumption~2 and by  Theorem~2.9 in \cite{effros65},  
thereby choosing $o(z)$ as the  origin of the  orbit  $\pi^{-1}(z)$.
We then extend $o:Z\to Y$ measurably. Thus,
for all $z\in  Z$,  we have  $\cK_{z}=L^2(X,\nu_{o(z)})$. 
\begin{lemma}\label{rosso}
 The field  of Hilbert spaces  $z \mapsto \cK_{z}$ is
 $\lambda$-measurable with respect to the measurable structure
 induced by $C_c(X)\subset K_{z}$ and the corresponding direct
 integral $\cK=\int_{ Z} \cK_{z}\, d\lambda(z)$ is a separable Hilbert space.
\end{lemma}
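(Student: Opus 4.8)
The plan is to verify directly the two conditions that make $z\mapsto\cK_z$ a $\lambda$-measurable field of Hilbert spaces in the sense recalled in the Appendix, using as fundamental family the countable subset $\{\varphi_k\}$ of $C_c(X)$ from Footnote~\ref{separa}, viewed as constant vector fields $z\mapsto\varphi_k\in L^2(X,\nu_{o(z)})=\cK_z$.

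First I would check that the scalar maps $z\mapsto\scal{\varphi_k}{\varphi_l}_{\cK_z}$ are $\lambda$-measurable. Since $\varphi_k\overline{\varphi_l}\in C_c(X)$, property (iii) of Theorem~\ref{corcoar} ensures that
$$
y\mapsto\int_X\varphi_k(x)\overline{\varphi_l(x)}\,d\nu_y(x)=\scal{\varphi_k}{\varphi_l}_{\nu_y}
$$
is continuous on $Y$. The origin section $o\colon Z\to Y$ is $\lambda$-measurable by construction, so the composition $z\mapsto\scal{\varphi_k}{\varphi_l}_{\nu_{o(z)}}=\scal{\varphi_k}{\varphi_l}_{\cK_z}$ is $\lambda$-measurable. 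For the totality condition, $C_c(X)$ is dense in $L^2(X,\nu_{o(z)})=\cK_z$ for every $z$ because $\nu_{o(z)}$ is a Radon measure on the second countable space $X$; hence the family $\{\varphi_k\}$, which is dense in $C_c(X)$ in the sense of Footnote~\ref{separa}, spans a dense subspace of each $\cK_z$. These two facts show that $\{\varphi_k\}$ is a fundamental family and that $z\mapsto\cK_z$ is $\lambda$-measurable.

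For separability I would invoke the standard fact that the direct integral of a $\lambda$-measurable field of separable Hilbert spaces over a standard Borel space equipped with a $\sigma$-finite measure is itself separable. Here $Z$ is locally compact and second countable, hence standard, and $\lambda$ is finite (being the image under $\pi$ of a finite measure on $Y$, cf.\ Footnote~\ref{pseudo}); moreover each fiber $\cK_z=L^2(X,\nu_{o(z)})$ is separable. Concretely, a countable total subset of $\cK=\int_Z\cK_z\,d\lambda(z)$ is given by the fields $z\mapsto\chi_E(z)\,\varphi_k$ with $E$ ranging over a countable algebra generating $\cB(Z)$ and $k\in\N$, and finite linear combinations with coefficients in $\mathbb Q+i\mathbb Q$ provide a countable dense set. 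I do not expect a genuine obstacle here: the whole statement reduces, through Theorem~\ref{corcoar}(iii) and the measurability of $o$, to these routine verifications.
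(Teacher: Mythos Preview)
Your proposal is correct and follows essentially the same approach as the paper: both arguments use Theorem~\ref{corcoar}(iii) to get continuity of $y\mapsto\scal{\varphi}{\varphi'}_{\nu_y}$ and then compose with the Borel measurable section $o$ to obtain $\lambda$-measurability of the field, and both deduce separability of $\cK$ from the second countability of $Z$ together with the countable fundamental family in $C_c(X)$. The paper's proof is terser (it simply cites the Corollary of Proposition~6, Ch.~II \S~1.5 in \cite{dix57} for the separability), while you spell out the density and the construction of a countable total set, but the substance is the same.
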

\begin{proof}
For any   $\varphi,\varphi'\in
C_c(X)$ the map $z\mapsto
\int_X\varphi(x)\overline{\varphi(x)'}d\nu_{o(z)}(x)$ is $\lambda$-mea\-sura\-ble because $y\mapsto
\int_X\varphi(x) \overline{\varphi(x)'}d\nu_y(x)$ is continuous (see
(iii) of Theorem~\ref{corcoar}) and
$o$ is Borel measurable. Since $ Z$ is second countable,
Corollary of Proposition~6 Ch.~II \S~1.5 in \cite{dix57} implies that $\cK$ is separable.
\end{proof}

\begin{theorem}\label{pazzini} 
Assume that   for $\lambda$-almost every $z\in Z$ the stabilizer $H_{z}$ is compact. There exist a countable family $\{z\mapsto \cK^{n}_{z}\}_{n\in\cN}$
of $\lambda$-measurable fields of Hilbert subspaces $\cK^{n}_{z}$ of $\cK_{z} $, and a family of cardinals
$\{{m_n}\}_{n\in \cN}\subset\{1,\ldots,\aleph_0\}$ such that,  for  almost every $z\in  Z$,
\begin{align}
  {\cK_{z}} & =\bigoplus_{{n\in\cN}} \cK_{z}^{n}\otimes \C^{{m_n}},\label{decomK} \\
  \Lambda_{z} &= \bigoplus_{{n\in\cN}} \,\Lambda_{z}^{n}\otimes \id,\label{decomL}
\end{align}
where \eqref{decomL} is the decomposition of $\Lambda_{z}$ into irreducibles.
\end{theorem}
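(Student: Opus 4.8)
The plan is to realize $z\mapsto\Lambda_z$ as a measurable field of type $\mathrm I$ representations and to apply the reduction theory of von~Neumann algebras on the separable direct integral $\cK=\int_Z\cK_z\,d\lambda(z)$ furnished by Lemma~\ref{rosso}. Decomposition \eqref{dec_inducente} already gives, for each fixed $z$, the splitting of $\Lambda_z$ into isotypic components with multiplicities $m_{\hat s}(z)$; the whole content of the theorem is that these pieces can be chosen to depend measurably on $z$ and then relabelled so that the multiplicities become the \emph{constant} cardinals $m_n$. The relabelling is pure bookkeeping once measurability is in hand: I would allow each field $\cK^n_z$ to vanish for some $z$, and for every cardinal $m\in\{1,\dots,\aleph_0\}$ reserve countably many slots $n$ with $m_n=m$, the $k$-th of which carries the $k$-th isotypic component of multiplicity $m$ and is $\{0\}$ when $\Lambda_z$ has no such component.

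The first and principal step is to exhibit $z\mapsto\cN_z:=\Lambda_z(H_z)''$ as a measurable field of von~Neumann algebras. The obstacle is that the compact groups $H_z=H_{o(z)}$ vary with $z$, so $\cN_z$ cannot be generated by the images of a single countable dense subset of a fixed group. Instead I would use the Borel section $o:Z\to Y$ together with Weil's formula \eqref{weil1} to produce a measurable Haar system $\{ds_z\}$ on the stabilizers, and the joint measurability of $(z,s)\mapsto\Lambda_{z,s}$: for $\varphi,\varphi'\in C_c(X)$ one has $\scal{\Lambda_{z,s}\varphi}{\varphi'}_{\cK_z}=\sqrt{\alpha(s^{-1})\beta(s^{-1})}\int_X\varphi(s^{-1}.x)\overline{\varphi'(x)}\,d\nu_{o(z)}(x)$, which is measurable in $(z,s)$ because the action is continuous and $z\mapsto\nu_{o(z)}$ is measurable by (iii) of Theorem~\ref{corcoar} and the argument of Lemma~\ref{rosso}. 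Picking a countable family $\{\Psi_k\}\subset C_c(Z\times H)$ whose fibrewise restrictions are dense in $C_c(H_z)$ and setting $T_k(z)=\int_{H_z}\Psi_k(z,s)\,\Lambda_{z,s}\,ds_z$, I would check that $z\mapsto T_k(z)$ is a measurable operator field and that $\{T_k(z)\}''=\cN_z$ for almost every $z$, which is precisely the measurability of $z\mapsto\cN_z$. The commutant field $z\mapsto\cN_z'=\Lambda_z(H_z)'$, i.e.\ the intertwiner algebra, is then measurable by the commutation theorem for direct integrals.

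The second step is the fibred structure theory. Since $H_z$ is compact, $\cN_z'$ is of type $\mathrm I$, and I would apply the reduction-theoretic decomposition of a type $\mathrm I$ von~Neumann algebra into homogeneous parts to the measurable field $z\mapsto\cN_z'$, obtaining a measurable grading $\cK_z=\bigoplus_m\cK_z^{[m]}$ on which $\cN_z'$ is homogeneous of type $\mathrm I_m$; equivalently, $\Lambda_z$ acts on $\cK_z^{[m]}$ with every irreducible constituent of multiplicity exactly $m$. On each homogeneous summand a measurable selection of a system of matrix units in the type $\mathrm I_m$ commutant trivialises the multiplicity space and yields $\cK_z^{[m]}\cong\cM_z^{[m]}\otimes\C^m$, with $\Lambda_z$ multiplicity-free on $\cM_z^{[m]}$. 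A further measurable choice of the minimal projections of the now abelian reduced commutant on $\cM_z^{[m]}$ extracts a countable family of measurable fields of irreducible subspaces, and these, tensored with $\C^m$, are the desired pieces $\cK^n_z\otimes\C^{m_n}$ with $m_n=m$.

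I expect the measurable selections to be the delicate point. Both the isolation of the isotypic pieces as measurable fields and, within a homogeneous type $\mathrm I_m$ component, the measurable choice of matrix units and of minimal projections rely on a Lusin/von~Neumann-type measurable selection theorem for fields of projections in a measurable field of type $\mathrm I$ algebras; the varying-group measurability handled in the first step is the other technical hurdle. Once these selections are available, the identities \eqref{decomK}--\eqref{decomL} hold fibrewise by construction, each $\Lambda^n_z$ is irreducible on the active $\cK^n_z$, and distinct active slots carry inequivalent irreducibles, so that \eqref{decomL} is genuinely the decomposition of $\Lambda_z$ into irreducibles. The separability of $\cK$ from Lemma~\ref{rosso} is what guarantees that all of these reduction-theoretic tools apply.
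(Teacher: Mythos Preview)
Your plan is correct and follows essentially the same route as the paper: establish that $z\mapsto\Lambda_z(H_z)''$ is a measurable field of type~$\mathrm I$ von~Neumann algebras, form the global type~$\mathrm I$ algebra $M=\int_Z M_z\,d\lambda$ on the separable space $\cK$, apply the homogeneous decomposition there, and push the resulting central projections down to the fibres via the direct-integral commutation and reduction theorems in Dixmier. The one substantive technical difference is in the first step. You propose generating $\cN_z$ by integrated operators $T_k(z)=\int_{H_z}\Psi_k(z,s)\Lambda_{z,s}\,ds_z$ through a measurable Haar system on the varying stabilizers; the paper instead invokes Aumann's measurable selection principle to produce Borel maps $\xi_k:Y\to H$ with $\{\xi_k(y)\}_k$ dense in each $H_y$, and then verifies directly that $z\mapsto\Lambda_{z,\xi_k(o(z))}$ is a measurable field of operators generating $M_z$. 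The paper's choice sidesteps the need to build the Haar system measurably (it only treats the volume $\vol(H_z)$, and that later, in Lemma~\ref{luglio}); your route would work too but requires that extra ingredient. After this point the two arguments coincide.
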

Before the proof,   some remarks are in order.

\begin{remark}
In~\eqref{decomK} it is understood that, for each $n\in\cN$ and
$j=1,\ldots,m_n$, the field of Hilbert subspaces $z\mapsto \cK_{z}^n\otimes \C\{e_j\}$
is $\lambda$-measurable.
\end{remark}

\begin{remark}\label{dec-induco} For each ${n\in\cN}$ and for almost every $z\in Z$ we denote by $\cH_{z}^{n}$ the Hilbert space carrying the induced
representation $\text{Ind}_{\R^d\rtimes H_{z}} (e^{-2\pi
  i\scal{o(z)}{\cdot}}\, \Lambda_{z}^{n})$. Reasoning as in
the proof of Theorem~10.1 of
\cite{mackey52}, for each $n \in \cN$,
$z\mapsto \cH_{z}^{n}$ is a $\lambda$-measurable field of Hilbert
subspaces, $\cH_{z}^{n}\subset \cH_{z}$, and
\begin{align}
  \cH & =\bigoplus_{{n\in\cN}}\int_{ Z} \cH_{z}^{n} \ d\lambda(z)  \otimes\C^{m_n}\label{decomH}\\
   W & =\bigoplus_{{n\in\cN}} \int_{ Z} \operatorname{Ind}_{\R^d\rtimes H_{z}} (e^{-2\pi  i\scal{o(z)}{\cdot}}\, \Lambda_{z}^{n})\ d\lambda(z) \otimes\id\label{decomW}
\end{align}
where, by Theorem~14.1 of \cite{mackey52}, each component $\operatorname{Ind}_{\R^d\rtimes H_{z}} (e^{-2\pi  i\scal{o(z)}{\cdot}}\, \Lambda_{z}^{n})$ is irreducible and
two of them are inequivalent provided that they are
different from zero (see the next remark).
\end{remark}

\begin{remark}\label{zero}
In the statement of Theorem~\ref{pazzini}, given $n\in\cN$,  it is
possible that for some $z \in Z$ the Hilbert space
$\cK_{z}^{n}$ reduces to zero as well as  $\cH_{z}^{n}$.
If this is the case, then clearly
$\Lambda_{z}^{n}$  and $\operatorname{Ind}_{\R^d\rtimes H_{z}} (e^{-2\pi  i\scal{o(z)}{\cdot}}\, \Lambda_{z}^{n})$ can be removed from
the corresponding integral decompositions of $\Lambda_{z}$ and
$W$. 
\end{remark}
\begin{remark}\label{IS}
Fix $z$ and compare \eqref{dec_inducente} with \eqref{decomL}.
The set $\cN$ is a parametrization of the relevant elements in the dual group
$\widehat{H_{z}}$ defined by the direct 
decomposition of $\Lambda_{z}$ into its irreducible components
$\Lambda_{z}^{n}$. In other words, for each $n\in\cN$ for which $\cK_{z}^{n}\not=0$ there exists $\hat s_n\in\widehat{H_{z}}$ such that $\Lambda_{z}^{n}=\Lambda_{z,\hat s_n}$ and ${m_n}=m_{\hat s}$ is its multiplicity, which is independent of $z$ by its very construction.
\end{remark}
\begin{remark}\label{treauno}
  As a consequence of Theorem~\ref{pazzini} and general results on
  direct integrals, for each ${n\in\cN}$ there exists a $\lambda$-measurable field
  $\{z\mapsto \eps^{n}_{z,\ell}\}_{\ell\geq 1}$ of Hilbert bases for each field
  $z\mapsto \cH_{z}^{n}$ and,  for any $F\in\cH$,
  \begin{align}
    F & = \sum_{{n\in\cN}}\sum_{j=1}^{{m_n}} \int_{ Z} 
    F^{n}_{z,j}\,d\lambda(z)\otimes e_{j} \label{lunga} \\
    F^{n}_{z,j}& =\sum_{\ell\geq 1} f^{n}_{j,\ell}(z)\, \eps^{n}_{z,\ell}\nonumber
  \end{align}
  where $z\mapsto f^{n}_{j,\ell}(z)$ is a $\lambda$-measurable complex
  function and
\begin{align}
  \lVert F\rVert_{\cH}^2= \sum_{{n\in\cN}} \sum_{j=1}^{{m_n}}  \int_{ Z}\lVert F^{n}_{z,j}\rVert_{\cH^{n}_{z}}^2
  d\lambda(z)= \sum_{{n\in\cN}}\sum_{j=1}^{{m_n}}  \sum_{\ell\geq
    1} \int_{ Z}  \lvert f^{n}_{j,\ell}(z)\rvert^2
  d\lambda(z).\label{norma-lunga}
\end{align}
Conversely, if $\{z\mapsto f^{n}_{j,\ell}(z)\}_{n,j,\ell}$ is
a family of $\lambda$-measurable complex functions such that
$$
\sum_{{n\in\cN}}\sum_{j=1}^{{m_n}}  \sum_{\ell\geq
    1} \int_{ Z}
\lvert f^{n}_{j,\ell}(z)\rvert^2 d\lambda(z) <+\infty,
$$
then~\eqref{lunga} defines an element $F\in\cH$.
\end{remark}

\begin{proof}[of Theorem~\ref{pazzini}] 
 We claim that there exists a sequence  of
Borel measurable functions $\xi_k:Y\to H$ such that, for any $y\in Y$, the set
$\{\xi_k(y)\}_{k\in \N}$ is dense in  $H_{y}$.  To this end, define $\Xi: Y\times H\to Y\times Y$ by $\Xi(y,h)=(h[y],y)$, a continuous map, hence Borel
measurable. Now, the diagonal $D=\{(y,y)\mid y\in Y\}$ is a
Borel set and $H_y=\{h\in H\mid \Xi(y,h)\in D\}$ for any $y\in Y$.  
By Aumann's measurable selection principle (see 
e.g. Theorem~III.23 of \cite{cava77})  the desired sequence exists.

For all $z\in  Z$,  let ${M_{z}}\subset{\mathcal L}(\cK_{z})$ denote the
von Neumann algebra on $\cK_{z}$ generated by the representation $\Lambda_{o(z)}$
of $H_{o(z)}$.  We show that $z\mapsto  M_{z}$ is a $\lambda$-measurable
field of von Neumann algebras.  For each $z\in  Z$ the continuity of  $s\mapsto \Lambda_{z,s}$ implies that  the family $\{ \Lambda_{z,\xi_{k}(o(z))}\}_{k\in\N}$ generates ${M_{z}}$. Hence, it is enough to prove that 
for any $k\in \N$  the field of operators $z\mapsto
\Lambda_{z,\xi_{k}(o(z))}$ is $\lambda$-measurable. This means  that  for any $\varphi,\varphi'\in C_c(X)$, the map 
$$
z\mapsto  \int_X
\sqrt{\alpha(\xi_k(o(z))^{-1})\beta(\xi_k(o(z))^{-1})}\varphi(\xi_k(o(z))^{-1}.x)
\overline{\varphi'(x)}d\nu_{o(z)}(x)
$$
is $\lambda$-measurable.  First we claim that
\[
(y,h)\mapsto \int_X 
\varphi(h^{-1}.x) \overline{\varphi'(x)}d\nu_{y}(x)
\]
is continuous on $Y\times H$. Fix $(y_0,h_0)\in Y \times H$ and
$\eps>0$. By (iii) of Theorem~\ref{corcoar} applied to
$\varphi^{h_0}\overline{\varphi'}\in C_c(X)$  there exists a compact
neighbourhood $U$ of $y_0$ such that for all $y\in U$
\[
\lvert  \int_X 
\varphi(h_0^{-1}.x) \overline{\varphi'(x)}d\nu_{y}(x)- \int_X 
\varphi(h_0^{-1}.x) \overline{\varphi'(x)}d\nu_{y_0}(x) \rvert\leq \eps/2.
\]
Choose a compact neighbourhood $V$ of $h_0$ e define 
$K=V.\supp{\varphi}$, which is a compact subset of $X$. 
The map $y\mapsto (\nu_y)_{K}$ is continuous from $U$ to
$M(K)=C(K)^*$ with respect to the weak* topology, so that $\sup_{y\in U}\nu_y(K)$ is bounded
(Corollary II.4 of \cite{bre83}).  Now, 
the map $h\mapsto \varphi^h$ is uniformly continuous.  Hence there is  a compact
neighbourhood $V'\subset V$ of $h_0$ such that, for all $h\in V'$,
$\varphi(h^{-1}.x)=\varphi(h_0^{-1}.x)=0$ if $x\not\in K$ and
\[
\sup_{x\in X}\lvert\varphi(h^{-1}.x)-\varphi(h_0^{-1}.x)\rvert\leq
\frac{\eps}{2(1+\sup_{x\in X}\lvert\varphi'(x)\rvert \sup_{y\in U}\nu_y(K))}.
\]
The triangular inequality gives that for all $(y,h)\in U\times V$
\[
\lvert  \int_X 
\varphi(h^{-1}.x) \overline{\varphi'(x)}d\nu_{y}(x)- \int_X 
\varphi(h_0^{-1}.x) \overline{\varphi'(x)}d\nu_{y_0}(x) \rvert\leq \eps,
\]
so that the claim is proved. Since $h\mapsto \sqrt{\alpha(h)^{-1}\beta(h^{-1})}$ is continuous and 
${z \mapsto (o(z),\xi_k(o(z)))}$ is Borel measurable
from $ Z$ to $Y\times
H$,  it follows that  $z\mapsto \Lambda_{z,\xi_{k}(o(z))}$ is a Borel measurable  field of operators and, hence, $\lambda$-measurable.

 Proposition~1, Ch~II \S~3.2 of \cite{dix57}  shows that
 $M:=\int_{ Z} {M_{z}}\, d\lambda(z)$ is a 
 von~Neumann algebra acting on $\cK$. Since $ Z$ is second countable,
Theorem~4, Ch~II \S~3.3 of \cite{dix57} implies that
\begin{align*}
  M' & = \int_{ Z} {M_{z}}'\, d\lambda(z), \\
  M\cap M' & = \int_{ Z} {M_{z}}\cap {M_{z}}'\,d\lambda(z).
\end{align*}
Further, both $M$ and $M'$ are
type I von Neumann algebras. Indeed, for almost every $z\in Z$,
$H_{z}$ is a group of type I, hence $\Lambda_{z}$ is a representation of type
I, that is, $M_z$ is a type I von
Neumann algebra. Corollary~2, Ch~II \S~3.5  of \cite{dix57} implies that $M$ is of type I , again because $Z$ is second countable. Finally, $M'$ is of type I  by Theorem~1 Ch.~1 \S~8 of \cite{dix57}.

By applying twice (A50) of \cite{dix64} we infer that there exists a countable
family $\{P^i\}_{i\in I}$ of non-zero pairwise orthogonal projections in $M\cap M'$ with sum
the identity such that both the reduced algebra\footnote{The algebra of operators obtained by restricting to the subspace $W_i=P_i\cK$ and then projecting back to $W_i$, hence a von Neumann algebra on $W_i$.}
${M^i:=M_{P^i}}$  and the reduced algebra $(M^i)'=(M')_{P^i}$
are homogeneous. Since $M$ and $M'$ are decomposable, for each $i\in I$ the decomposition
$ P^i =\int_{ Z} P^i_{z}\,d\lambda(z)$ holds,
where, for all $z\in Z$, $P^i_{z}$ is a projection in $M_{z}\cap M_{z}'$ and $z\mapsto P_{z}^i$ is a $\lambda$-measurable field of operators. 
Proposition~6 Ch.~II
\S~3.5 of \cite{dix57} implies that
\[ M^i=\int_{ Z} M^i_{z}\,d\lambda(z)\qquad
{M^i}'=\int_{ Z} {M^i_{z}}'\,d\lambda(z)\] 
where, for all $z\in Z$,  $M^i_{z}$ is the reduced algebra associated with $P^i_{z}$.  \\
Furthermore,
for almost every $z\in Z$ , the  family
$\{P^i_{z}\}_{i\in I}$ is pairwise orthogonal with sum the identity.
Indeed, given $i,j\in I$ with $i\neq j$, Proposition~3 Ch.~2
  \S~2.3 in \cite{dix57} gives $0=P_iP_j=\int
  P_{z}^iP_{z}^jd\lambda(z)$, hence the Corollary of the cited section ensures that
  $P_{z}^iP_{z}^j=0$ for almost all $z$. Since $I$ is countable, then the  above equality holds almost
  everywhere for all $i,j\in I$. Given such a $z$, 
$\{P^i_{z}\}_{i\in I}$ is a family of pairwise orthogonal
projections, so that $\sum_i P_{z}^i$ converges to a projection
$P_{z}$ with respect to the strong operator topology, and so does
$\sum_i P^i$ converge to the identity. Proposition~4 Ch.~2 in \cite{dix57}
  \S~2.3 and the uniqueness of the limit imply that  $P_{z}=\id$
  for almost every $z$.

Fix $i\in I$.  Since $M^i$ and ${M^i}'$ are
homogeneous,  the very definition of homogeneous  von~Neumann algebra (see Ch.~3, \S~3.1 in \cite{dix57}) and the canonical isomorphism given by Proposition~5
Ch.~1, \S~2.4 in~\cite{dix57},  give
\begin{align*}
  P^i\cK & = \C^{d_i} \otimes \C^{m_i}\otimes \mathcal T^i \\
  M^i  & = \mathcal{L}( \C^{d_i} )\otimes  \C\,
{\id}_{\C^{m_i} }\otimes
\mathcal{A}^i \\
{M^i}' & =\C\,
{\id}_{\C^{d_i}}\otimes \mathcal{L}( \C^{m_i}) \otimes
\mathcal{A}^i,
\end{align*}
where $\mathcal{A}^i$
is a maximal abelian algebra acting on a suitable closed subspace $\mathcal
T^i\subset\cK$.   Denote by $Q$  the orthogonal projection onto
$\C e_1\otimes \C^{m_i}\otimes\mathcal T^i\in M^i$, where  $e_1$ is  the first 
element of the canonical basis of any $\C^p$.  The corresponding
reduced algebra of $M^i$ is ${\id}_{\C^{d_i} }\otimes \mathcal{A}^i$.
 Furthermore, if $\widehat{Q}$ is
the orthogonal projection onto $\C e_1\otimes\mathcal T^i\in (M^i_Q)'$, 
the corresponding reduced algebra of $(M^i_Q)'$ is $\mathcal{A}^i$. 
Hence, reasoning as before, $\mathcal T^i$ is a direct integral of
a $\lambda$-measurable
field $z\mapsto \cT_{z}^i$ of Hilbert subspaces of $P_{z}^i\cK_{z}$ and
$\mathcal{A}^i$ is a decomposable algebra, so that 
\[ \mathcal{A}^i= \int_{ Z} \mathcal A_{z}^i\,d\lambda(z)\]
where, for almost every $z$,  $\mathcal A_{z}^i$ is a maximal
von Neumann algebra on $\mathcal T_{z}^i$.
Proposition~3 Ch.~2 \S~3.4 in \cite{dix57} gives
\[ M^i= \int_{ Z} \mathcal{L}( \C^{d_i} )\otimes  \C\,
{\id}_{\C^{d_i} }\otimes
\mathcal A_{z}^i \,d\lambda(z),\]
so that (ii) of Proposition~1 Ch.~2 \S~3.4 implies, for almost
every $z$,
\begin{equation}
P_{z}^i\cK_{z}  = \C^{d_i} \otimes \C^{m_i}\otimes \mathcal
T_{z}^i  \qquad
M^i_{z}= \mathcal{L}( \C^{d_i} )\otimes  \C\, 
{\id}_{\C^{m_i} }\otimes
\mathcal{A}^i_{z}.
\label{dima}
\end{equation}
Hence,  Proposition~3 Ch.~III \S~3.2  and Theorems~1 and~2 of Ch.~1
\S.~7.3 give the existence of a unitary operator $J^i_{z}$ from $P^i_{z}\cK_{z}$ onto $\C^{d_i}\otimes \C^{m_i}\otimes L^2(\Omega_{z}^i,\omega^i_{z})$ such that 
\begin{equation}
J^i_{z}\mathcal{M}^i_{z} {J^i_{z}}^{-1}= \mathcal{L}(\C^{d_i})\otimes  \C\,
{\id}_{\C^{m_i}}\otimes L^\infty(\Omega_{z}^i,\omega_{z}^i)
\label{dsa}
\end{equation}
where $\Omega_{z}^i$ is a locally
compact second countable space and $\omega_{z}^i$ is a measure
with support $\Omega_{z}^i$.

The previous arguments and the compactness assumption imply that there exists a negligible set $N\subset Z$ such that for all $z\in Z\setminus N$, \eqref{dima} holds true for any $i\in I$ and $H_z$ is compact.
 Hence,  with the notation
used in~\eqref{dec_inducente},  for  $z\notin N$ we put 
\[ n^i_{z}=\text{card}\{ \hat{s}\in\widehat{H_{z}}: \dim{\cK_{z,\hat{s}}}=d_i,\, m_{\hat s}=m_i\}.\]
Hence in~\eqref{dsa} the set $\Omega^i_{z}$ can be chosen as $\{1,\ldots,n^i_{z}\}$ if
$1\leq n^i_{z}<+\infty$, $\N$ if $n^i_{z}=\aleph_0$,  $\emptyset$
if $n^i_{z}=0$, and the measure $\omega^i_{z}$ as the
corresponding counting measure. 
By construction, $z \mapsto \mathcal T^i_{z}$ is a
measurable field of Hilbert spaces and Proposition~1
Ch.~II,~\S~1.4 of \cite{dix57} implies that for any cardinal $p$ the
set  ${ Z}^{ip}=\{z\in Z\setminus N: n_{z}^i=p\} $
is $\lambda$-measurable and, for each $i\in I$
\begin{equation}
  \label{diptem}
  \bigcup_{p}   Z^{ip} = Z\setminus N.
\end{equation}
Clearly for all $z\in  Z^{ip}$ we have $\Omega^i_{z}=\Omega^{ip}$, where
\[
\Omega^{ip}:=
\begin{cases}
  \{1,\ldots,p\} & 1\leq p<+\infty \\
  \N & p=\aleph_0 \\
  \emptyset &  p=0,
\end{cases}
\]
so that  the von Neumann algebra $L^\infty(\Omega_{z}^i,\omega_{z}^i)$ is equal to $\ell^{\infty}(\Omega^{ip})$, 
independently of $z$. Lemma~2 Ch.~2 \S~3. of \cite{dix57}
implies that the unitary operator $J^i_{z}:\C^{d_i}\otimes \C^{m_i}\otimes\C^p\to P^i_{z}\cK_{z}$ can be chosen in such
a way that $z \mapsto J^i_{z}$ is
$\lambda$-measurable.  The previous arguments show that the relevant indices $n=(i,p,k)$ run  on a  countable set that will be denoted $\mathcal N$. Define
$m_n=m_i$ and
\[ \cK_{z}^n=
\begin{cases}
  {J^i_{z}} \bigl(\C^{d_i}\otimes \C \{e_1\} \otimes\C\{e_k\} \bigr)& z\in Z^{ip},\ 1\leq k\leq p,\, p>0 \\
 \{0\} & \text{otherwise.}
\end{cases}
\]
Summarizing, we finally obtain the following facts, which entail the result.
\begin{enumerate}[i)]
\item For each $n\in \mathcal N$, the map $z\mapsto \cK^n_{z}$ is
  $\lambda$-measurable since $z \mapsto J_{z}^i$ is
  $\lambda$-measurable field of operators.
\item For almost all $z\in  Z$ and for each $n
  \in\mathcal N$ the Hilbert space $\cK^n_{z}$ is
  invariant with respect to $\Lambda_{z}$, the
  corresponding restriction is irreducible  and the restriction to
$J^i_{z} \bigl(\C^{d_i}\otimes \C^{m_i} \otimes \C\{e_k\}\}
\bigr)$ is a factor representation, see~\eqref{dsa}. Thus
\[ J^i_{z}\bigl(\C^{d_i}\otimes \C^{m_i} \otimes \C\{e_k\}\bigr)=
\cK_{z}^n \otimes  \C^{m_n} \qquad \Lambda_{|\cK_{z}^n
  \otimes  \C^{m_n}}=\Lambda_{|\cK_{z}^n}\otimes \id.\]
In particular, for each $j=1,\ldots,m_n$ the field $z\mapsto
J^i_{z}\bigl(\C^{d_i}\otimes \C\{e_j\} \otimes \C\{e_k\})$ is $\lambda$-measurable.
\item For almost all $z\in  Z$, for each $n\neq n'$ the
  restriction of $\Lambda_{z}$ to $\cK^n_{z}$ and
  $\cK^{n'}_{z}$ are inequivalent, provided that both spaces are
  different from zero \eqref{dsa}.
\item For almost all $z\in  Z$, $\{\cK_{z}^n
  \otimes  \C^{m_n} \}_{n\in\mathcal N}$ is a family of pairwise orthogonal closed
  subspace with sum $\cK_{z}$, by \eqref{diptem} and
  the definition of $\cK_{z}^n$.
\end{enumerate}
\end{proof}

By means of the intertwining operator $S$ given by
Theorem~\ref{intertwine}, the direct decomposition~\eqref{decomW}
gives rise to a corresponding decomposition of the
mock-metaplectic representation $U$.  Hence, the abstract theory of
\cite{fuhr05}  applies and one can characterize the admissible vectors for
$U$. However, we can apply directly Corollary~\ref{compatto}. 
We need a last technical lemma concerning the measurability of the
map $z\mapsto \vol(H_{z})$ (compare with Lemma~18 of \cite{fu09}).
\begin{lemma}\label{luglio}
Assume that for almost every $y_0\in  Y$ the stabilizer $H_{y_0}$ is compact and
define 
\[
\vol(H_{y_0}) = \int_{H_{y_0}} ds
\]
where $ds$ is the unique Haar measure of $H_{ y_0}$ such that
\[
\int_H \varphi(h) \alpha(h^{-1}) dh = \int_{Y} \left(\int_{H_{ y_0}}
\varphi(h(y)s) ds\right)d\tau_{\pi(y_0)}(y)\qquad \varphi\in C_c(Y).
\]
Then:
\begin{enumerate}[(i)]
\item for all $y_0$ and $h\in H$, $\vol(H_{h[y_0]})=\Delta_G(h^{-1}) \vol(H_{y_0})$;
\item the map $y_0\mapsto \vol(H_{y_0})$ is Lebesgue measurable.
\end{enumerate}
Furthermore, given a Borel measurable section $o: Z\to Y$, the map 
\[z\mapsto \frac{\dim{\cK_{z}^{ n}}}{\vol(H_{z})}
\] 
is $\lambda$-measurable; if $G$ is unimodular,  it is independent
of the choice of~$o$. 
\end{lemma}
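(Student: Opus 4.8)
The plan is to reduce everything to one clean identity. For each $y_0\in Y$ let $q_{y_0}\colon H\to Y$ be the orbit map $q_{y_0}(h)=h[y_0]$. I claim that
$$
(q_{y_0})_*\bigl(\alpha^{-1}\,dh\bigr)=\vol(H_{y_0})\,\tau_{\pi(y_0)}
$$
as measures on $Y$. To see this I would take $g\in C_c(Y)$ and apply Weil's formula \eqref{weil1} to $\varphi=g\circ q_{y_0}$: since every $s\in H_{y_0}$ fixes $y_0$ one has $\varphi(h(y)s)=g(h(y)s[y_0])=g(h(y)[y_0])=g(y)$, so the inner integral over $H_{y_0}$ is simply $g(y)\,\vol(H_{y_0})$ and the right-hand side becomes $\vol(H_{y_0})\int_Y g\,d\tau_{\pi(y_0)}$. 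Comparing with the left-hand side $\int_H g(h[y_0])\alpha(h^{-1})\,dh$ gives the identity. This single formula yields both assertions of the lemma.

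For (i) I would use $q_{h[y_0]}=q_{y_0}\circ R_{h}$, where $R_{h}(k)=kh$ is right translation. A change of variables, together with $\int_H F(kh)\,dk=\Delta_H(h)^{-1}\int_H F(k)\,dk$ and the multiplicativity of $\alpha$, shows that $(R_{h})_*(\alpha^{-1}dh)=\frac{\alpha(h)}{\Delta_H(h)}\,\alpha^{-1}dh=\Delta_G(h^{-1})\,\alpha^{-1}dh$, the last equality by \eqref{modularG}. Pushing forward by $q_{y_0}$ and using the key identity twice (for $y_0$ and for $h[y_0]$, whose orbit maps land in the same orbit and hence carry the same $\tau_{\pi(y_0)}$) gives $\vol(H_{h[y_0]})\,\tau_{\pi(y_0)}=\Delta_G(h^{-1})\vol(H_{y_0})\,\tau_{\pi(y_0)}$; since $\tau_{\pi(y_0)}\neq0$ this is exactly $\vol(H_{h[y_0]})=\Delta_G(h^{-1})\vol(H_{y_0})$.

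For (ii) the key identity reads $\vol(H_{y_0})\int_Y g\,d\tau_{\pi(y_0)}=\int_H g(h[y_0])\alpha(h^{-1})\,dh$ for every $g\in C_c(Y)$. For $g\ge0$ the right-hand side is measurable in $y_0$ by Tonelli, because $(y_0,h)\mapsto g(h[y_0])\alpha(h^{-1})$ is continuous, while $y_0\mapsto\int_Y g\,d\tau_{\pi(y_0)}$ is measurable since $\{\tau_z\}$ is a measurable disintegration (Theorem~\ref{Tmackey}) and $\pi$ is Borel. I would then fix a countable family $\{g_k\}\subset C_c(Y)$, $g_k\ge0$, rich enough that for every $y_0$ with $\pi(y_0)\in\pi(Y)$ some $g_k$ has $\int_Y g_k\,d\tau_{\pi(y_0)}>0$; such a family exists because $Y$ is second countable and each $\tau_z$ with $z\in\pi(Y)$ is a nonzero Radon measure. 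On the measurable set where the $g_k$-denominator is positive, $\vol(H_{y_0})$ equals the ratio of two measurable functions, hence is measurable; these sets cover $Y$ up to a null set and the ratios agree, so they patch to show $y_0\mapsto\vol(H_{y_0})$ is Lebesgue measurable. The main obstacle is precisely this measurability: the difficulty is that the normalization of $ds$ varies with $y_0$, and the Radon--Nikodym reading of the key identity is what converts $\vol(H_{y_0})$ into a quotient of manifestly measurable quantities.

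Finally, composing with the Borel section $o\colon Z\to Y$ shows that $z\mapsto\vol(H_{o(z)})=\vol(H_{z})$ is $\lambda$-measurable, and $0<\vol(H_{z})<\infty$ since $H_{z}$ is compact; the map $z\mapsto\dim\cK_{z}^{n}$ is $\lambda$-measurable because $z\mapsto\cK_{z}^{n}$ is a measurable field constant on the pieces produced in Theorem~\ref{pazzini}, so the quotient $z\mapsto\dim\cK_{z}^{n}/\vol(H_{z})$ is $\lambda$-measurable. If $G$ is unimodular then $\Delta_G\equiv1$, so (i) makes $\vol(H_{y_0})$ constant along orbits and hence independent of the origin $o(z)$; moreover the quasi-regular representations attached to two origins $y_0$ and $h[y_0]$ of the same orbit are intertwined by the unitary $T_{y_0,h}$ of Lemma~\ref{T}, so their irreducible decompositions coincide and $\dim\cK_{z}^{n}$ is likewise independent of $o$. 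Therefore the quotient does not depend on the choice of section.
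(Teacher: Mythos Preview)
Your proof is correct and follows essentially the same approach as the paper. Both hinge on the identity
\[
\int_H g(h[y_0])\,\alpha(h^{-1})\,dh=\vol(H_{y_0})\int_Y g\,d\tau_{\pi(y_0)},
\]
obtained from Weil's formula, and then read off (i) via the right-translation substitution $h\mapsto h\ell$ and (ii) by expressing $\vol(H_{y_0})$ as a quotient of measurable functions of $y_0$. The only cosmetic differences are that you phrase the identity as a pushforward of measures, and for (ii) you use a countable family $\{g_k\}\subset C_c(Y)$ to cover all orbits, whereas the paper picks a single continuous strictly positive $f\in L^1(Y)$ so that one ratio works globally (at the cost of invoking the $L^1$ extension of Weil's formula via Theorem~\ref{th:2}); both arguments are valid.
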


\begin{proof}
Fix a  continuous   $f\in L^1(Y)$ such that $f(y)>0$ for all $y\in Y$. The definition  of $\tau_{z}$ (see Theorem~\ref{Tmackey}) and~(iii) of Theorem~\ref{th:2} imply that  $f$ is $\tau_{z}$-integrable for $\lambda$-almost every  $z\in Z$. Clearly, the function $(y_0,h)\mapsto f(h[y_0]) \alpha(h^{-1})$ is continuous on $Y\times H$. Given $y_0\in Y$, let $z=\pi(y_0)$. Hence we can choose $y_0$ as the origin of $\pi^{-1}(z)$ and define $ds$ as the unique Haar measure of $H_{y_0}=H_{z}$ for which~\eqref{weil1} holds true. By (ii)~of Theorem~\ref{th:2}, for almost all $y_0\in Y$, 
\begin{align}
  0<\int_H f (h[y_0]) \alpha(h^{-1})\,dh & = \int_Y\left( \int_{H_{y_0}}
    f(h_ys[y_0])\,ds\right)d\tau_{\pi(y_0)}(y)\nonumber \\ 
    & =
  \vol(H_{y_0})\int_Y f(y) d\tau_{\pi(y_0)}(y)<+\infty\label{hofame}
\end{align}
since $h_ys[y_0]=y$; the first inequality is due to the fact $f>0$ and the last    follows from $f\in L^1(Y)$. Clearly $y_0\mapsto \int_H f (h[y_0]) \alpha(h^{-1})\,dh$ is Lebesgue-measurable as well as $y_0\mapsto \int_Y f(y) d\tau_{\pi(y_0)}(y)$ is Lebesgue measurable and strictly positive, so that $y_0\mapsto \vol(H_{y_0})$ is $\lambda$-measurable, too. The fact that  the map $z\mapsto\dim{\cK_{z}^{ n}}$ is $\lambda$-measurable for all $ n\in\cN$ is a consequence of Proposition~1,
Ch. 2 \S~1.4 of \cite{dix57}.\\
If $y_1=\ell[y_0]$ for some $\ell\in H$, whence $\pi(y_0)=\pi(y_1)$, then by \eqref{hofame}
\begin{align*}
\vol(H_{y_1})\int_Y f(y) d\tau_{\pi(y_0)}(y) & = \int_H f (h[y_1]) \alpha(h^{-1})\,dh \\ 
 (~h\mapsto h\ell^{-1}~)& = \Delta_H(\ell^{-1})\alpha(\ell) \int_H f (h[y_0]) \alpha(h^{-1})\,dh \\
 & = \Delta_G(\ell^{-1}) \vol(H_{y_0})\int_Y f(y) d\tau_{\pi(y_0)}(y).
\end{align*}
The second half of the lemma is clear.
\end{proof}

We are ready to state our main result on the admissible vectors of $G$. We distinguish according as to weather $G$ is unimodular or not. We consider first the unimodular case, compare with Eq.~(4.14) of
Theorem~4.22 in \cite{fuhr05}.
\begin{theorem}\label{MAIN1} Assume that $G$ is unimodular and that for almost every $z\in  Z$ 
the stabilizer $H_{z}$ is compact. The representation  $U$ is
reproducing if and only if the following  two conditions hold true: 
  \begin{enumerate}[(i)]
\item the integral
  \begin{equation}
  \int_{ Z} \dfrac{\card\Phi^{-1}(o(z))}{\vol{H_{z}}}\, d\lambda(z)\label{unimodular} 
    \end{equation}
    is finite;
\item for all $ n\in \cN$ and for almost every $z\in Z$ for which $\cK_{z}^{ n}\neq 0$
  \begin{equation}
{m_n} \leq \dim{\cH_{z}^{ n} } \label{uni-dim}
\end{equation}
where the notation is as  in~\eqref{decomH} and~\eqref{decomW}.
\end{enumerate}
Under the above equivalent conditions, $\eta$ is an admissible vector for $U$ if and only if
\[ S\eta=  \sum_{{ n\in\cN}}\sum_{j=1}^{{m_n}}\int\limits_{ Z} \sqrt{ \frac{\dim{\cK_{z, n}}}{\vol{H_{z}}}} \eps_{z,j}^{ n}\, d\lambda(z) \otimes e_{j}  ,\]
where  $ \{z\mapsto\eps^{ n}_{z,j}\}_{j\geq 1}$ is any measurable field of Hilbert bases for
  $z\mapsto \cH_{z}^{ n} $.
\end{theorem}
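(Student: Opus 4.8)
The plan is to transport the entire question through the unitary equivalence $S$ of Theorem~\ref{intertwine}: admissibility of $\eta$ for $U$ is the same as admissibility of $F=S\eta$ for $W$, and the explicit form claimed for $S\eta$ will emerge from the explicit $F$ we construct. By Theorem~\ref{adm2} combined with Proposition~\ref{compatto}, admissibility of $F=\int_Z F_z\,d\lambda(z)$ amounts to requiring, for $\lambda$-almost every $z$ and every $n\in\cN$ with $\cK_z^n\neq0$, that the fibre family $\{F_{z,j}^n\}_{j=1}^{m_n}$ of Remark~\ref{treauno} satisfy the orthogonality relations \eqref{degreeOP}. The decisive simplification is that $G$ is unimodular, so $\Delta_G\equiv1$ and the operator $d_{z,\hat s}$ of \eqref{formaldegree2} collapses to the scalar $\dim\cK_z^n/\vol H_z$; consequently \eqref{degreeOP} reads exactly as the statement that $\{F_{z,j}^n\}_{j=1}^{m_n}$ is an orthogonal system in $\cH_z^n$ in which every vector has squared norm $\dim\cK_z^n/\vol H_z$.

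First I would prove the forward implication. Suppose $U$ is reproducing and pick an admissible $F$. For almost every $z$ and each $n$ with $\cK_z^n\neq0$, the mere existence of $m_n$ nonzero pairwise orthogonal vectors in $\cH_z^n$ forces $m_n\le\dim\cH_z^n$, which is condition \eqref{uni-dim}, hence (ii). To obtain (i) I would evaluate $\|F\|^2$ through the norm formula \eqref{norma-lunga}: each pair $(n,j)$ contributes $\int_Z(\dim\cK_z^n/\vol H_z)\,d\lambda(z)$ and there are $m_n$ admissible values of $j$, so by Tonelli $\|F\|^2=\int_Z\bigl(\sum_n m_n\dim\cK_z^n\bigr)\vol(H_z)^{-1}\,d\lambda(z)$. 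By \eqref{decomK} the inner sum is $\dim\cK_z=\dim L^2(X,\nu_{o(z)})$, and Corollary~\ref{finiteness}(ii), which applies precisely because $G$ is unimodular and the stabilizers are compact, shows that $\Phi^{-1}(o(z))$ is finite and carries $\nu_{o(z)}$, whence $\dim\cK_z=\card\Phi^{-1}(o(z))$. Thus $\|F\|^2$ equals the integral \eqref{unimodular}, which must therefore be finite.

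For the converse I would construct $F$ explicitly, assuming (i) and (ii). For each $n$ fix the $\lambda$-measurable field of Hilbert bases $\{z\mapsto\eps^{n}_{z,j}\}_{j\ge1}$ of $z\mapsto\cH_z^n$ supplied by Remark~\ref{treauno}, and set $F_{z,j}^n=\sqrt{\dim\cK_z^n/\vol H_z}\;\eps^{n}_{z,j}$ for $1\le j\le m_n$ on the set where $\cK_z^n\neq0$, and $F_{z,j}^n=0$ otherwise. Condition (ii) guarantees that the required $m_n$ basis vectors are available, so each fibre family is orthogonal with the prescribed norm and \eqref{degreeOP} holds; measurability of $z\mapsto F_{z,j}^n$ follows from measurability of the bases together with Lemma~\ref{luglio}, which yields $\lambda$-measurability of $z\mapsto\dim\cK_z^n/\vol H_z$. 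Reading the norm computation of the previous paragraph in reverse, $\|F\|^2$ again equals the integral \eqref{unimodular} and is finite by (i), so $F\in\cH$. Hence $F$ satisfies \eqref{SECOND} by Proposition~\ref{compatto}, is admissible for $W$, and $\eta=S^{-1}F$ is admissible for $U$; by construction $S\eta$ has exactly the displayed form.

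The hard part will be the bookkeeping that forces the two fibrewise quantities $\sum_n m_n\dim\cK_z^n$ and $\card\Phi^{-1}(o(z))$ to coincide while simultaneously ensuring that the assembled field $z\mapsto F_z$ is genuinely $\lambda$-measurable rather than only defined pointwise; both hinge on the measurable decomposition of Theorem~\ref{pazzini} and on Lemma~\ref{luglio}. A secondary but necessary point is the legitimacy of interchanging the countable sum over $\cN$ with the integral over $Z$, which is justified by nonnegativity of all integrands via Tonelli's theorem.
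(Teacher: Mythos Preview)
Your proposal is correct and follows essentially the same route as the paper's own proof: transport through $S$, invoke Theorem~\ref{adm2} and Proposition~\ref{compatto} with $\Delta_G\equiv 1$ so that \eqref{degreeOP} becomes an ordinary orthogonality condition with prescribed norm, derive (ii) and the norm identity giving (i) in the forward direction, and in the converse build $F$ from scaled basis vectors with measurability handled by Lemma~\ref{luglio}. Your explicit justification of the identity $\sum_n m_n\dim\cK_z^n=\dim\cK_z=\card\Phi^{-1}(o(z))$ via \eqref{decomK} and Corollary~\ref{finiteness}(ii), and your remark on Tonelli, make transparent two steps the paper leaves implicit.
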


\begin{proof} 
We use the same notation as in Remark~\ref{treauno}. Theorem~\ref{adm2} and
Corollary~\ref{compatto} with $\Delta_G(h_y)=1$ give that $\eta\in
L^2(X)$ is an admissible vector for $U$ if and only if $F=W\eta \in
\cH$ satisfies the   condition that follows. Given $ n\in \cN$, 
for almost every $z\in Z$ for which  $\cK_{z}^{ n}\neq\{0\}$
(see Remarks~\ref{zero} and~\ref{IS}),  for all $i,j=1,\ldots,{m_n}$
\[ \scal{F^{ n}_{z,i}}{F^{ n}_{z,j}}_{ \cH_{z}^{ n} }=\delta_{i,j} \frac{\dim{\cK_{z, n}}}{\vol{H_{z}}},\]
that is,  the family $\{F^{ n}_{z,i}\}_{i=1}^{{m_n}}$ is orthogonal in $\cH_{z}^{ n}$ and  normalized with square norm equal to $\dim{\cK_{z, n}}/{\vol{H_{z}}}$. \\
As a consequence, if $\eta$ is an admissible vector, then clearly~\eqref{uni-dim} holds true and, by~\eqref{norma-lunga}, we have  that
\[ \lVert F\rVert^2_{\cH} = \int_{ Z} \left(\sum_{ n\in \cN}\sum_{i=1}^{{m_n}}   \frac{\dim{\cK_{z, n}}}{\vol{H_{z}}}   \right)d\lambda(z)=\int_{ Z}
\dfrac{\card\Phi^{-1}(y_0)}{\vol{H_{z}}}\, d\lambda(z),\]
and \eqref{unimodular} follows.
Conversely, define $F\in\cH$ such that, for all $j=1,\ldots,{m_n}$ and $\ell\geq 1$
\[
 f^{ n}_{j,\ell}(z)=\delta_{j,\ell}\sqrt{ \frac{\dim{\cK_{z, n}}}{\vol{H_{z}}}}  
\qquad \text{a.e.}z\in Z,
\]
which is possible due to~\eqref{uni-dim}. All the functions
$ f^{ n}_{j,\ell}$ are $\lambda$-measurable by Lemma~\ref{luglio}. Finally, \eqref{unimodular} and the last  string of
equalities imply $\lVert F\rVert^2_{\cH}<+\infty$.  
\end{proof}

We now consider the non-unimodular case.  For all $ n\in \cN$ and for
almost every $z\in Z$ we define the positive
self-adjoint injective operator $d_{z, n}$ acting on $\cH_{z}^{ n}$ by multiplication as 
in~\eqref{formaldegree2}, namely
\[
(d_{z, n}F_{z, n})(g)
=\frac{\dim \cK_{z, n}}{\vol H_{z}}\Delta_G(g)\,F_{z, n}(g) \qquad g\in G.
\]
\begin{theorem}\label{MAIN2}
Assume that $G$ is non-unimodular and that the stabilizer $H_{z}$ is
compact for almost every $z\in  Z$ . Then $U$ is reproducing and $\eta\in L^2(X)$ is an admissible vector for $U$ if and only if
$S\eta=  \sum_{{ n\in\cN}}\sum_{j=1}^{{m_n}}\int\limits_{ Z}
F^{ n}_{z,j} \,d\lambda(z) \otimes e_{j} $ is such that
\begin{enumerate}[(i)]
\item for all  $ n\in\cN$ and $i=1,\ldots,{m_n}$, the map  $z\mapsto F_{z,
     n,i}$ is a measurable field of vectors for $\{ \cH_{z}^{ n} \}$; 
\item for all$ n\in \cN$  and for almost all $z\in Z$ for which  $\cK_{z}^{ n}\neq 0$
\[
\scal{d_{z, n}^{-1/2}F^{ n}_{z,i}}{d_{z, n}^{-1/2}F^{ n}_{z,j}}_{\cH_{z, n}}
=\delta_{ij} \qquad i,j=1,\dots,m_{ n}
\]
\item 
$\displaystyle{
\sum_{{ n\in\cN}}  \sum_{j=1}^{{m_n}} \int_{ Z} \lVert F_{z.j}^{ n}\rVert_{ \cH_{z}^{ n} }^2
  d\lambda(z)<+\infty}$.
\end{enumerate}
\end{theorem}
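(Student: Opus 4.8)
The plan is to deduce everything from the equivalence $U\cong W$ of Theorem~\ref{intertwine} and from the reformulations of admissibility in Theorem~\ref{adm2} and Proposition~\ref{compatto}, combined with the measurable decomposition of $\Lambda_z$ and $W_z$ into irreducibles given by Theorem~\ref{pazzini}. Since $S$ is unitary and intertwines $U$ with $W$, a vector $\eta\in L^2(X)$ is admissible for $U$ exactly when $F=S\eta$ is admissible for $W$, so I would work entirely on the $W$ side. First I would record that, by Remark~\ref{treauno}, writing $F=\sum_{n\in\cN}\sum_{j=1}^{m_n}\int_Z F^n_{z,j}\,d\lambda(z)\otimes e_j$ according to \eqref{decomH}--\eqref{decomW}, membership $F\in\cH$ is equivalent to the $\lambda$-measurability of the fields $z\mapsto F^n_{z,j}$ together with finiteness of the norm \eqref{norma-lunga}; these are exactly conditions (i) and (iii). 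Thus (i) and (iii) amount to nothing more than the well-definedness of $F$ as an element of $\cH$.

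Next I would translate the admissibility of such an $F\in\cH$ into condition (ii). Because $H_z$ is compact for almost every $z$, Theorem~\ref{adm2} and Proposition~\ref{compatto} say that $F$ is admissible for $W$ if and only if \eqref{degreeOP} holds for almost every $z$, for every relevant $\hat s\in\widehat{H_z}$ and all $i,j$. Using the identification of $\cN$ with the relevant elements of $\widehat{H_z}$ in Remark~\ref{IS}, and the operator $d_{z,n}$ defined by \eqref{formaldegree2}, the reformulation \eqref{DEGOP2} shows that \eqref{degreeOP} is precisely the requirement that each $F^n_{z,i}$ lie in the domain of $d_{z,n}^{-1/2}$ and that $\{d_{z,n}^{-1/2}F^n_{z,i}\}_{i=1}^{m_n}$ be orthonormal in $\cH^n_z$, which is condition (ii). Collecting the two facts, for $F\in\cH$ we get: $F$ admissible $\iff$ (ii); and $F\in\cH$ $\iff$ (i) and (iii). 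This yields the stated characterization in both directions.

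It then remains to show that $U$ is always reproducing, i.e. that (i)--(iii) can be met simultaneously. For a single $z$ this is the non-unimodular half of Corollary~\ref{compatto-s}: since $d_{z,n}$ is positive, self-adjoint, injective and semi-invariant with weight $\Delta_G^{-1}$ and $G$ is non-unimodular, its spectrum is unbounded and $\dim\cH^n_z=+\infty$, so orthonormal-in-$d_{z,n}^{-1/2}$ families of any length $m_n\le\aleph_0$ exist. To upgrade this to an element of $\cH$ I would control the norms: putting $G^n_{z,j}=d_{z,n}^{-1/2}F^n_{z,j}$, condition (ii) says exactly that $\{G^n_{z,j}\}_j$ is orthonormal in $\cH^n_z$, and then $\|F^n_{z,j}\|^2=\scal{d_{z,n}G^n_{z,j}}{G^n_{z,j}}$. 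Semi-invariance makes the spectrum of $d_{z,n}$ invariant under multiplication by $\Delta_G(G)$, hence the ranks of the increasing spectral projections $\mathbf 1_{[0,b]}(d_{z,n})$ are independent of $b$; as they increase strongly to the identity on the infinite-dimensional space $\cH^n_z$, each range $\operatorname{Ran}\mathbf 1_{[0,b]}(d_{z,n})$ must be infinite-dimensional. Choosing thresholds $b^n_j$ with $\sum_{n,j}b^n_j<+\infty$ and selecting, inductively in $j$, a unit vector $G^n_{z,j}$ in $\operatorname{Ran}\mathbf 1_{[0,b^n_j]}(d_{z,n})$ orthogonal to the previously chosen ones, I obtain $\|F^n_{z,j}\|^2\le b^n_j$, so that (taking $\lambda$ finite, as permitted by the footnote construction of the pseudo-image measure, or else inserting a positive weight $w\in L^1(\lambda)$) the total norm $\sum_{n,j}\int_Z\|F^n_{z,j}\|^2\,d\lambda$ is finite, giving (iii), while (ii) holds by construction.

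The main obstacle is the measurability of this pointwise construction, that is, turning the inductive, $z$-by-$z$ selection of the orthonormal vectors $G^n_{z,j}$ into genuine $\lambda$-measurable fields so that (i) holds. I would handle this with the measurable functional calculus applied to the measurable field $z\mapsto d_{z,n}$ --- whose measurability follows from Lemma~\ref{luglio} (measurability of $z\mapsto\dim\cK^n_z/\vol H_z$) together with the explicit multiplication-by-$\Delta_G$ form \eqref{formaldegree2} of $d_{z,n}$ --- so that $z\mapsto\mathbf 1_{[0,b]}(d_{z,n})$ are measurable fields of operators, followed by a measurable selection of orthonormal vectors in their ranges of the same kind already used in the proof of Theorem~\ref{pazzini}. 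Once $z\mapsto G^n_{z,j}$ is measurable, so is $z\mapsto F^n_{z,j}=d_{z,n}^{1/2}G^n_{z,j}$, yielding (i) and completing the construction of an admissible $F$, hence of an admissible $\eta=S^{-1}F$ for $U$.
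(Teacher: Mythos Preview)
Your proposal is correct and follows essentially the same path as the paper: the characterization via (i)--(iii) is deduced, as you do, from Theorem~\ref{adm2}, Proposition~\ref{compatto} and \eqref{DEGOP2} together with Remark~\ref{treauno}, and the paper says exactly this (referring back to the proof of Theorem~\ref{MAIN1}). For the existence part the two arguments are equivalent but phrased differently. The paper works with the concrete realization of $d_{z,n}$ as multiplication by $\frac{\dim\cK_{z,n}}{\vol H_z}\Delta_G(h(y))$ and, using that $\hat\Delta_z(y)=\Delta_G(h(y))$ takes arbitrarily small values on the orbit $\pi^{-1}(z)$, picks $F^n_{z,j}$ supported on a set $Y_{z,n,i}$ of positive $\tau_z$-measure where $\hat\Delta_z\le a_{n,i}\vol H_z/\dim\cK_{z,n}$; this forces $\lVert F^n_{z,j}\rVert^2\le a_{n,i}$. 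Your spectral-projection construction is the abstract version of the same thing: for a multiplication operator $d_{z,n}=M_\phi$, the range of $\mathbf 1_{[0,b]}(d_{z,n})$ is precisely the subspace of functions supported on $\{\phi\le b\}$, which is the paper's $Y_{z,n,i}$. Your semi-invariance argument for the infinite rank of these spectral projections is a clean substitute for the paper's direct observation that $\hat\Delta_z$ ranges over a nontrivial subgroup of $(0,+\infty)$; both yield the same conclusion. The measurability of the selection is treated at the same level of detail in both accounts, via Lemma~\ref{luglio} and a measurable-field argument.
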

\begin{proof} The fact that $\eta$ is admissible if and only if (i), (ii) and (iii) hold true is similar to the proof of Theorem~\ref{MAIN1}.
The non-trivial part is the existence of an admissible
vector. This fact is a consequence of Theorem~4.23 of \cite{fuhr05},
whose proof can be repeated in our  setting.  We report the main ideas. \\
Fix a strictly positive sequence such that $\sum_{ n\in\cN}\sum_{i=1}^{m_n} a_{ n,i}<+\infty$. 
For almost every $z\in Z$ the stability subgroup $H_z$ is compact, hence the modular function $\Delta_G$ defines a continuous  surjective $\hat\Delta_z:\pi^{-1}(z)\to(0,+\infty)$ by
$\hat\Delta_z(y)=\Delta_G(h(y))$,
where $h(y)[o(z)]=y$. Therefore there exists a subset $Y_{z, n,i}$ of $\pi^{-1}(z)$ with
strictly positive $\tau_{z}$-measure such that  for all $y\in Y_{z, n,i}$
\[
\sup_{y\in  Y_{z, n,i}}\hat\Delta_z(y)\leq \frac{a_{ n,i} \vol  H_{z}}{\dim \cK_{z, n}}.
\]
By Lemma~\ref{luglio} we may select a family of $\lambda$-measurable fields
$\{z\mapsto F^{ n}_{z,j}\}_{j=1}^{{m_n}}$ of vectors in $\operatorname{dom}d_{z, n}^{-1/2}$, that are 
orthonormal with respect to the scalar product induced by $d_{z, n}^{-1/2}$ with the property that the support with
respect to $\tau_{z}$ of the map $y\mapsto \lVert F^{ n}_{z,j}(h(y))\rVert_{\cK_{z,n}}^2$ is contained in $Y_{z, n,i}$. Thus, (iii)  is satisfied because
\[ \lVert F^{ n}_{z,j}\rVert_{ \cH_{z}^{ n} }^2\leq \sup_{y\in O_{z, n,i}}\frac{\dim \cK_{z, n}\Delta_G(h(y)) }{\vol  H_{z}}\leq a_{ n,i}. \]
Finally, (i) and (ii) are true by construction.
\end{proof}

\section{Examples}\label{examples}
We now discuss  the examples introduced in Section~2.

\subsection{Example~\ref{fuhr}}

Here the map $\Phi$ is the identity so that the set of critical points reduces to the
empty set and Assumption~1  is satisfied with the choice $X=Y=\R^d$
(recall that $n=d$) and $\alpha(h)\beta(h)=1$ for all $h\in H$. Assumption~2 is the fact that the
semi-direct product $\R^d\rtimes H$ is regular. In general, nothing more specific can be said on the parameter space $Z$ and the measure $\lambda$ on it,  other than what was said in the comments following Assumption~2.
Clearly, for all $y\in \R^d$, $\Phi^{-1}(y)$ is a singleton, the
corresponding measure $\nu_y$ is  trivial, so that
Theorem~\ref{adm1} states that $\eta\in L^2(X)$ is admissible for $U$, for $\lambda$-almost $z\in Z$ if and only if
\[
\int_{H} \lvert\eta(h^{-1}[y_0])\rvert^2 \,dh =1 
\]
where $y_0$ is a fixed origin in $\pi^{-1}(z)$.  Since the above
equation holds true for any other point in $\pi^{-1}(z)$, it
follows that $\eta$ is a weak admissible vector in the sense of
Definition~7 of~\cite{fu09}. 
 Theorem~6 of the cited
paper proves that Assumption~2 is essentially necessary to have
weak admissible vectors, (see the comment at the end of Section~\ref{sec-ass2}). 
Corollary~\ref{finiteness} guarantees that the stabilizers $H_{z}$
are compact for almost every $z\in Z$.  Hence the
results of Section~\ref{sec-compatto} hold true. Clearly, for almost every $\cK_z=\C$, $\cN$ is a singleton and ${m_n}=\dim(\cK_{z}^{ n})=1$, so
that $U$ is always reproducing if $G$ is non-unimodular. Otherwise, it is such if and only if
$\int_{ Z}(\vol{H_{z}})^{-1} d\lambda(z)$ is finite, which is precisely the content of Theorem~19 of
    \cite{fu09}. See also Section~5 of~\cite{fuhr05}. The presence of $\vol{H_{z}}$ is due to a different normalization of the Haar measures on the stabilizers.

\subsection{Example~\ref{scro}}
In this example $n=2$ and $d=1$ so that $U$ is not reproducing. This
fact is well known since $G$ has a non-compact center and $U$ is
irreducible. 

 \subsection{Example~\ref{eclass}}

The main result here is about groups of the form~\eqref{parabolic} with
  $n=d$, namely:
  \begin{theorem}\label{neqd}  Let $n=d$.
If the $H$-orbits of $\Phi(\R^d)$ are locally closed, the restriction
of the metaplectic representation to $G$ is reproducing if and only if $G$ is non-unimodular and  $H_y$ is compact for almost every $y\in
    \Phi(\R^d)$.
  \end{theorem}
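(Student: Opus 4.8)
The plan is to derive both implications from the machinery already in place, routing the two modularity regimes through Theorems~\ref{MAIN1} and~\ref{MAIN2}. First observe that in the class~\eqref{parabolic} the map $\Phi$ of~\eqref{riesz} is the homogeneous quadratic polynomial $x\mapsto-\tfrac12 P_\Sigma(x\,{}^tx)$, so its Jacobian $J(\Phi)$ is a homogeneous polynomial of degree $d$ and the critical set $\cC=\{J(\Phi)=0\}$ is either all of $\R^d$ or Lebesgue--null. The degenerate case $J(\Phi)\equiv0$ is of no interest: there $\Phi(\R^d)$ is null by Sard and $U$ cannot be reproducing by Theorem~\ref{nleqd}. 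Thus I may assume $\cC$ is null, so that Assumption~1 holds, while Assumption~2 is the standing hypothesis that the orbits are locally closed. With both in force, the implication ``$G$ non-unimodular and $H_y$ compact for almost every $y$'' $\Rightarrow$ ``$U$ reproducing'' is exactly the content of Theorem~\ref{MAIN2} and requires nothing further.

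For the converse, suppose $U$ is reproducing. By Theorem~\ref{nleqd} the set $\cC$ is null, and $n=d$ is assumed. The decisive use of $n=d$ is that every regular value $y$ has fibre $\Phi^{-1}(y)$ which is a $0$-dimensional submanifold and, being the real zero locus of the polynomial system $\Phi(\cdot)=y$, is therefore \emph{finite}. Since $\Phi(\cR)$ is $H$-invariant (Lemma~\ref{regular}) and the regular values have full measure by Sard, for almost every $z$ I may choose the origin $o(z)$ to be a regular value, so that $\card\Phi^{-1}(o(z))<\infty$. Theorem~\ref{adm2} guarantees that~\eqref{SECOND} holds for almost every $z$, and feeding the finiteness of the fibres into Corollary~\ref{finiteness}(i) shows that $H_y$ is compact for almost every orbit. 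This settles the compactness half of the conclusion.

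It remains to prove that a reproducing $U$ forces $G$ to be non-unimodular, and this is where the homogeneity of $\Phi$ enters. Arguing by contradiction, assume $G$ is unimodular. Having just shown the stabilizers are compact, Theorem~\ref{MAIN1} applies and the existence of an admissible $\eta$ forces
\[
\int_{Z}\frac{\card\Phi^{-1}(o(z))}{\vol H_{z}}\,d\lambda(z)=\|\eta\|^2<+\infty.
\]
I would show this integral is in fact infinite. The relation $\Phi(tx)=t^2\Phi(x)$ makes $\cC$, $X=\cR$ and $Y$ invariant under the dilations $\delta_t\colon x\mapsto tx$, and since $H[t^2y]=t^2H[y]$ the dilations descend to an $\R_+$-action $\hat\delta_t$ on the orbit space $Z$. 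Because the modular function of any group is trivial on its centre, if $H$ contained the scalars $\R_+\,\id$ one would get $\Delta_G(s\,\id)=s^{2d}\neq1$, contradicting unimodularity; hence in the unimodular case the scalars lie outside $H$, the flow $\hat\delta_t$ is non-trivial, and $Z$ carries a full non-compact radial direction along which $\lambda$ has infinite mass. Since $\card\Phi^{-1}(\cdot)$ is dilation-invariant (the scaling $x\mapsto tx$ maps $\Phi^{-1}(y)$ bijectively onto $\Phi^{-1}(t^2y)$) while $\Delta_G\equiv1$ makes $\vol H_{z}$ behave compatibly along $\hat\delta_t$ (via Lemma~\ref{luglio} and the covariance~\eqref{alfabeta}, the homogeneity weights cancel), the integrand fails to be $\lambda$-integrable in the radial direction and the integral diverges, a contradiction. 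Therefore $G$ is non-unimodular, completing the proof.

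The main obstacle is precisely this last divergence: one must track how $\lambda$, $\vol H_{z}$ and $\card\Phi^{-1}(o(z))$ transform under $\hat\delta_t$ and verify that \emph{unimodularity is exactly the borderline} at which $\tfrac{\card\Phi^{-1}(o(z))}{\vol H_{z}}\,d\lambda(z)$ ceases to be integrable along the dilation flow; this will require combining the covariance~\eqref{alfabeta}, the Weil normalisation~\eqref{weil1} and the transformation law of Lemma~\ref{luglio}, as well as handling the case in which some orbits absorb the dilation through non-scalar elements of $H$. A secondary, routine point is the reduction of the finiteness of the fibres to the fact that a $0$-dimensional real algebraic set is finite, together with the verification that Assumption~1 is automatic off the degenerate locus $J(\Phi)\equiv0$.
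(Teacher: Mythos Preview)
Your proof handles three of the four pieces correctly and in essentially the same way as the paper: the converse implication via Theorem~\ref{MAIN2}, the finiteness of the fibres (your connected-components argument is a valid alternative to the paper's Jacobian criterion in Appendix~\ref{aldo}), and the compactness of the stabilizers via Corollary~\ref{finiteness}(i).

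The gap is in the non-unimodularity argument. You try to route it through Theorem~\ref{MAIN1} and then prove that the integral~\eqref{unimodular} diverges by analysing how $\lambda$, $\vol H_z$ and $\card\Phi^{-1}(o(z))$ transform under the dilation flow $\hat\delta_t$ on $Z$. You yourself flag this as the ``main obstacle'' and leave it unresolved; in particular, the case where $H$ already absorbs the dilations through non-scalar elements (so $\hat\delta_t$ acts trivially on $Z$, or on part of it) is not handled, and the claimed cancellation of homogeneity weights between $\vol H_z$ and $d\lambda$ is asserted rather than proved. This is a genuine hole: without it the forward implication is incomplete.

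The paper avoids all of this via Proposition~\ref{weiss}, which is worth internalising because it uses the same dilation idea but applies it directly at the level of the reproducing formula rather than on the orbit space. The key observation (Lemma~5) is that if $\Phi$ is homogeneous of degree~$p$ and the $H$-action on $\R^d$ is linear, then for any admissible $\eta$ the dilated vector $\sqrt{\delta^{np-d}}\,\eta^\delta$ is also admissible, by a change of variables in the reproducing integral. One then computes $\|\eta\|^2$ two ways: first by applying the reproducing formula with wavelet $\eta$ to the function $\eta^\delta$, then flipping the inner product (using $\Delta_G\equiv1$) and applying the reproducing formula with wavelet $\sqrt{\delta^{np-d}}\,\eta^\delta$ to the function $\eta$. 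Tracking the Jacobians gives $\|\eta\|^2=\delta^{-np}\|\eta\|^2$, a contradiction since $np=2d>0$. This argument is short, needs no structure on $Z$, and is indifferent to whether the scalars lie in $H$.
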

In order to prove Theorem~\ref{neqd}, which could be stated under the slightly more general hypothesis that $\Phi$ is a homogeneous polynomial without referring to the symplectic group,  we need  an auxiliary  result which is of some interest by itself and  whose main idea goes back to \cite{LWWW}.

\begin{proposition}\label{weiss} Let $n\leq d$.
  Assume that $\Phi$ is a homogeneous map of degree $p>0$  and that the action on $\R^d$ is
  linear. If $U$ is a reproducing
  representation, then $G$ is non-unimodular.
\end{proposition}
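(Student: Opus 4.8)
The plan is to exploit the dilation symmetry forced by the homogeneity of $\Phi$, an idea that goes back to \cite{LWWW}. For $\lambda>0$ I would introduce the unitary dilation $D_\lambda$ on $L^2(\R^d)$ defined by $(D_\lambda f)(x)=\lambda^{d/2}f(\lambda x)$. Since the $H$-action on $\R^d$ is linear we have $h^{-1}.(\lambda x)=\lambda(h^{-1}.x)$, and since $\Phi$ is homogeneous of degree $p$ we have $\scal{\Phi(\lambda x)}{a}=\scal{\Phi(x)}{\lambda^p a}$; plugging these into \eqref{mock} gives, after a direct computation, the intertwining relation
\begin{equation}
D_\lambda\,U_{(a,h)}\,D_\lambda^{-1}=U_{(\lambda^p a,\,h)}\qquad (a,h)\in G.
\label{wdil}
\end{equation}
Thus $D_\lambda$ implements the map $\delta_\lambda(a,h)=(\lambda^p a,h)$, which is an automorphism of $G$ because $h^\dagger$ is linear. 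I would then record that, since $dg=\alpha(h)^{-1}da\,dh$ and the dilation $a\mapsto\lambda^p a$ scales the Lebesgue measure on $\R^n$ by $\lambda^{pn}$, the modulus of $\delta_\lambda$ is $\operatorname{mod}(\delta_\lambda)=\lambda^{pn}$.

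Next I would transport admissibility through \eqref{wdil}. Assume $U$ is reproducing with admissible vector $\eta$ and write $V_\eta f(g)=\scal{f}{U_g\eta}$. Relation \eqref{wdil} gives $V_{D_\lambda\eta}f(g)=V_\eta(D_{1/\lambda}f)\bigl(\delta_{1/\lambda}(g)\bigr)$; changing variable $g\mapsto\delta_\lambda(g)$, using $\operatorname{mod}(\delta_\lambda)=\lambda^{pn}$ and the fact that $D_{1/\lambda}$ is unitary yields
\[
\int_G|\scal{f}{U_g(D_\lambda\eta)}|^2\,dg=\lambda^{pn}\,\|f\|^2\qquad f\in L^2(\R^d).
\]
Hence $\eta_\lambda:=\lambda^{-pn/2}D_\lambda\eta$ is again admissible for every $\lambda>0$, and because $D_\lambda$ is unitary its norm is $\|\eta_\lambda\|=\lambda^{-pn/2}\|\eta\|$. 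Since $p>0$ and $n\geq 1$ we have $pn>0$, so as $\lambda$ ranges over $(0,+\infty)$ these admissible vectors have genuinely distinct norms.

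Finally I would invoke the rigidity of admissibility in the unimodular case: for a reproducing representation of a \emph{unimodular} group all admissible vectors share a common norm. In the present framework this is exactly the content of Theorem~\ref{MAIN1}, which exhibits every admissible $\eta$ through $S\eta$ assembled from Hilbert bases $\{\eps^{n}_{z,j}\}$, whence $\|\eta\|^2=\int_{Z}\card\Phi^{-1}(o(z))/\vol H_{z}\,d\lambda(z)$ is the same number for all admissible $\eta$; more generally it reflects the Duflo--Moore theory \cite{dumo76,fuhr05}, in which, for unimodular $G$, the semi-invariant operators governing admissibility reduce to scalars on each isotypic block and thereby fix $\|\eta\|$. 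This common value is incompatible with the family $\{\eta_\lambda\}$ of varying norms produced above, so $G$ cannot be unimodular. The one genuine subtlety — and the step I would handle with the most care — is precisely this common-norm statement in the possibly reducible, non-compact-stabilizer situation: it is where unimodularity really enters, and one must either establish it from the Duflo--Moore structure or reduce to the compact-stabilizer regime (e.g.\ via Theorem~\ref{nleqd} and Corollary~\ref{finiteness}, noting that when $n=d$ the fibers are finite and the stabilizers compact).
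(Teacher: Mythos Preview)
Your approach coincides with the paper's in its essential idea: both exploit the dilation symmetry to manufacture a one-parameter family of admissible vectors (your $\eta_\lambda=\lambda^{-pn/2}D_\lambda\eta$ is exactly the paper's $\sqrt{\delta^{np-d}}\eta^\delta$ under $\delta=1/\lambda$), and both derive a contradiction from unimodularity.

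The gap is in your last step. You need that all admissible vectors of a reproducing representation of a \emph{unimodular} group share the same norm, and you justify this via Theorem~\ref{MAIN1}. But Theorem~\ref{MAIN1} carries the additional hypothesis that almost all stabilizers $H_z$ are compact, which is \emph{not} assumed in Proposition~\ref{weiss}; your proposed reduction through Corollary~\ref{finiteness} only addresses $n=d$ (and even then requires finite fibers, which for a merely homogeneous---not polynomial---$\Phi$ is not automatic). Citing Duflo--Moore/F\"uhr is legitimate but not self-contained within the paper.

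The point is that the common-norm fact has a two-line proof, and that computation is exactly what the paper carries out. If $\eta$ and $\eta'$ are both admissible and $G$ is unimodular, then
\[
\|\eta'\|^2=\int_G|\scal{\eta'}{U_g\eta}|^2\,dg
=\int_G|\scal{\eta}{U_{g^{-1}}\eta'}|^2\,dg
=\int_G|\scal{\eta}{U_g\eta'}|^2\,dg
=\|\eta\|^2,
\]
the third equality being the inversion $g\mapsto g^{-1}$, which preserves left Haar measure precisely when $\Delta_G\equiv 1$. The paper runs this swap explicitly with $\eta'=\sqrt{\delta^{np-d}}\eta^\delta$, arriving at $\|\eta\|^2=\delta^{-np}\|\eta\|^2$. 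Had you inserted this line in place of the appeal to Theorem~\ref{MAIN1}, your argument would be complete and essentially identical to the paper's.
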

The proof is based on the following lemma.
\begin{lemma}  Let $n\leq d$.
 Assume that $\Phi$ is a homogeneous map of degree $p>0$ and that  the action on $\R^d$ is
  linear. If $\eta$ is an admissible
 vector for $U$, then for any $\delta\in\R_+$, the dilated vector $\sqrt{\delta^{np-d}}\eta^\delta$ is also admissible.
\end{lemma}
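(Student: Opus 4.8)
The plan is to realize the dilation by $\delta$ as a unitary operator on $L^2(\R^d)$ and to show that it intertwines $U$ with a frequency--rescaled copy of itself. First I would introduce the unitary dilation $D_\delta\colon L^2(\R^d)\to L^2(\R^d)$, $(D_\delta f)(x)=\delta^{-d/2}f(\delta^{-1}x)$, which is unitary because the Lebesgue measure on $\R^d$ scales by $\delta^d$. A one--line computation gives $\eta^\delta=\delta^{d/2}D_\delta\eta$, so that the candidate vector becomes $\sqrt{\delta^{np-d}}\,\eta^\delta=\delta^{np/2}D_\delta\eta$; pinning down this scalar is the bookkeeping on which the whole argument turns.

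The crucial step is the intertwining identity
\[
U_{(a,h)}D_\delta=D_\delta\,U_{(\delta^p a,\,h)},\qquad a\in\R^n,\ h\in H,
\]
which I would verify by applying both sides to $f\in L^2(\R^d)$ and comparing the results pointwise. Here the homogeneity $\Phi(\delta^{-1}x)=\delta^{-p}\Phi(x)$ turns the character $e^{-2\pi i\scal{\Phi(\delta^{-1}x)}{a}}$ into $e^{-2\pi i\scal{\Phi(x)}{\delta^{-p}a}}$, while the linearity of the $H$-action lets $\delta^{-1}$ commute past $h^{-1}$, i.e. $\delta^{-1}(h^{-1}.x)=h^{-1}.(\delta^{-1}x)$. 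The factors $\beta(h)^{-1/2}$ and $\delta^{-d/2}$ occur identically on both sides, so the only net effect is the rescaling $a\mapsto\delta^p a$ of the frequency variable.

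Finally I would substitute this into the admissibility functional. Writing $dg=\alpha(h)^{-1}\,da\,dh$ and using $D_\delta^{*}=D_{\delta^{-1}}$,
\begin{align*}
\int_G\bigl|\scal{f}{U_g(\sqrt{\delta^{np-d}}\,\eta^\delta)}\bigr|^2\,dg
&=\delta^{np}\int_H\!\int_{\R^n}\bigl|\scal{D_{\delta^{-1}}f}{U_{(\delta^p a,h)}\eta}\bigr|^2\,\frac{da\,dh}{\alpha(h)}\\
&=\int_G\bigl|\scal{D_{\delta^{-1}}f}{U_g\eta}\bigr|^2\,dg=\|D_{\delta^{-1}}f\|^2=\|f\|^2,
\end{align*}
where the second equality is the change of variables $b=\delta^p a$ on $\R^n$, which contributes a factor $\delta^{-np}$ cancelling the prefactor $\delta^{np}$; the third uses the admissibility of $\eta$, and the last the unitarity of $D_{\delta^{-1}}$. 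As $f$ is arbitrary, this establishes admissibility of $\sqrt{\delta^{np-d}}\,\eta^\delta$. The only genuine obstacle is the careful tracking of the powers of $\delta$: the normalization $\delta^{np-d}$ is engineered precisely so that the $\delta^{d/2}$ arising from rewriting $\eta^\delta$ through $D_\delta$ and the $\delta^{np}$ produced by the change of variables conspire to leave the functional invariant.
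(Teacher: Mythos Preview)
Your proof is correct and follows essentially the same approach as the paper: both rely on the homogeneity $\Phi(\delta x)=\delta^p\Phi(x)$, the linearity of the $H$-action (so that $\delta^{-1}$ commutes with $h^{-1}$), and the changes of variables $x\mapsto\delta x$, $a\mapsto\delta^{-p}a$ to reduce to the admissibility of $\eta$ applied to a dilated test vector. The only difference is packaging: you encapsulate the change of variables in $x$ as the unitary $D_\delta$ and record the covariance as the clean intertwining identity $U_{(a,h)}D_\delta=D_\delta U_{(\delta^p a,h)}$, whereas the paper carries out the same substitutions directly inside the integral and tracks the powers $\delta^{q+2d-np}$ and $\delta^{q+d-np}$ step by step.
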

\begin{proof} Put  $q=np-d$. The assumption of $\Phi$ implies
  that  for all $x\in {\R^d} $, $a\in \R^n$ and $\delta\in \R_+$,
  \begin{equation}
\scal{\Phi(\delta x)}{\delta^{-p}a}=\scal{\Phi(x)}{a}.\label{scaling}
\end{equation}
Clearly, $\sqrt{\delta^{q}}\eta^\delta\in L^2(X)$ and, for all $f\in
L^2(X)$, the linearity of $x\mapsto h.x$ gives 
\begin{align*}
 \int_G \lvert\scal{f}{U_g \sqrt{\delta^{q}}\eta^\delta}\rvert^2\,dg
 & = \delta^{q} \int_H\int_{\R^d} \Bigl| \int_{\R^d}  f(x)\beta(h)^{-\frac12}\times\\
 &\hskip0.4truecm\times
 e^{2\pi i \scal{\Phi(x)}{a}}\overline{\eta}(h^{-1}.(\delta^{-1}x))dx\Bigr|^2 \frac{dadh}{\alpha(h)} \\
(x\mapsto \delta x,\,a\mapsto \delta^{-p}a,\eqref{scaling}) 
& = \delta^{q+2d-np}\int_G
\lvert\scal{f^{\delta^{-1}}}{U_{g}\eta}\rvert^2 dg  \\
(\text{reproducing formula})&  = \delta^{q+2d-np} \int_{\R^d} \lvert
f(\delta x)\rvert^2dx \\
(x\mapsto \delta^{-1} x) & = \delta^{q+d-np} \ \lVert f\rVert^2 =  \lVert f\rVert^2,
\end{align*}
so that $\sqrt{\delta^{q}}\eta^\delta$ is an admissible vector for $U$.
\end{proof}
\begin{proof}[Proof of Proposition~\ref{weiss}] By contradiction,
  assume that $G$ is unimodular. 
Fix $\delta\in\R_+$. Choose an admissible vector $\eta\in L^2(X)$. Then
\begin{align*}
\int_X\lvert \eta(x)\rvert^2dx & = \delta^{-d} \int_X\lvert
\eta^{\delta}( x)\rvert^2dx \\
(\text{reproducing formula for }\eta~) & = \delta^{-d}\int_H\int_A \lvert
\scal{\eta^{\delta}}{U_{ah}\eta}\rvert^2 \frac{dadh}{\alpha(h)} \\
(a\mapsto -a,\, h\mapsto h^{-1})& = \delta^{-d}\int_H\int_A
\lvert\scal{U_{(h^\dag[a],h)}\eta^{\delta}}{\eta}\rvert^2
\frac{\alpha(h^{})\,dadh}{\Delta_H(h^{})} \\
(a\mapsto (h^\dag)^{-1}[a]) & = \delta^{-d}\int_H\int_A
\lvert\scal{U_{(a,h)}\eta^{\delta}}{\eta}\rvert^2 \Delta_G(h^{-1})
\frac{dadh}{\alpha(h)} \\
(q=np-d)&=\delta^{-q-d}  \int_G \lvert\scal{\eta}{U_{g}\sqrt{\delta^q}\eta^{\delta}}\rvert^2
dg\\
(\text{reproducing formula for }\sqrt{\delta^q}\eta) &
=\delta^{-np} \int_X\lvert \eta(x)\rvert^2dx= \delta^{-np} \lVert\eta\rVert^2.
\end{align*}
Since $\lVert\eta\rVert\neq 0$ and $np\neq 0$, this is a contradiction.
\end{proof}

  \begin{proof}[of Theorem~$\ref{neqd}$] Clearly Assumption~2 is satisfied.
    Suppose that $U$ is reproducing. Since $\Phi$ is quadratic,
    Proposition~\ref{weiss} implies that $G$ is non-unimodular and
    Theorem~\ref{nleqd} gives that the set $\cC$ of critical points is
    negligible.  The Jacobian criterion    implies that for all $y\in
    \Phi(\cR)$ the fiber $\Phi^{-1}(y)\cap\R$ is finite (see Appendix~\ref{aldo}). Theorem~\ref{adm2}
    implies that for almost all $y\in\Phi(\cR)$ equality~\eqref{SECOND}
    holds true and, as a consequence of (i) of
    Corollary~\ref{finiteness}, the corresponding stabilizer $H_{y}$ is compact. Conversely, if $G$ is non-unimodular and almost
    every stabilizer is compact, the set of critical points is a proper
    Zariski closed subset of $\R^d$, so that it is
    negligible. Theorem~\ref{MAIN2} implies that $U$ is reproducing.
  \end{proof}
  
Theorem~\ref{MAIN2}  characterizes the
admissible vectors. However, one can also  apply directly
Theorem~\ref{adm1}, taking into account that $\Phi^{-1}(y)$ is a finite set. 

\begin{corollary}
 A function $\eta\in L^2(X)$ is an admissible
vector for $U$ if and only if for $\lambda$-almost every $z\in Z$, there exists  $y\in \pi^{-1}(z)$ such that
for all points $x_{1},\ldots x_{M}\in\Phi^{-1}(y)$ 
\[
\int_H \eta(h^{-1}.x_{i})\overline{ \eta(h^{-1}.x_{j})}
\frac{dh}{\alpha(h)\beta(h)}= (J\Phi)(x_{i}) \ \delta_{ij}\qquad
i,j=1,\ldots M. 
\]
If the above equation is satisfied for a pair $x_{i},x_{j}\in
\Phi^{-1}(y)$, then it holds true for any pair $s.x_{i},s.x_{j}\in \Phi^{-1}(y)$ with $s\in H_{y}$.
\end{corollary}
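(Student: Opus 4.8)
The plan is to specialize Theorem~\ref{adm1} to the situation at hand, where $n=d$ forces each fiber $\Phi^{-1}(y)$ to be a $0$-dimensional submanifold, hence a finite set $\{x_1,\dots,x_M\}$. The first step is to record the explicit form of the measures $\nu_y$. When $n=d$ the coarea disintegration of Theorem~\ref{corcoar} degenerates: the $(d-n)$-dimensional Hausdorff measure on each fiber is counting measure and $J(\Phi)(x)=|\det\Phi_{*x}|$, so that
\[
\nu_y=\sum_{x\in\Phi^{-1}(y)}\frac{1}{J(\Phi)(x)}\,\delta_x .
\]
Consequently $L^2(X,\nu_y)$ is finite-dimensional, isometric to $\C^M$ with the weighted inner product $\scal{u}{v}_{\nu_y}=\sum_i J(\Phi)(x_i)^{-1}u(x_i)\overline{v(x_i)}$, and the indicator functions $e_i$ of the individual points $x_i$ form an orthogonal basis.

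Next I would translate the admissibility criterion~\eqref{FIRST}. Because $L^2(X,\nu_y)$ is finite-dimensional, no closed-operator argument is needed: by polarization, \eqref{FIRST} holds for all $u$ if and only if it holds on the basis $\{e_i\}$. Using $\eta_y^h(x_k)=\eta(h^{-1}.x_k)$ one computes $\scal{e_i}{\eta_y^h}_{\nu_y}=J(\Phi)(x_i)^{-1}\overline{\eta(h^{-1}.x_i)}$; inserting a suitable pair $(e_j,e_i)$ into the polarized identity and clearing the common factor $J(\Phi)(x_i)^{-1}J(\Phi)(x_j)^{-1}$ produces exactly the stated orthogonality relations $\int_H \eta(h^{-1}.x_i)\overline{\eta(h^{-1}.x_j)}\,\tfrac{dh}{\alpha(h)\beta(h)}=(J\Phi)(x_i)\delta_{ij}$. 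Combined with Theorem~\ref{adm1}, whose origin $y\in\pi^{-1}(z)$ is the one furnished there, this yields the claimed equivalence.

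For the final covariance assertion, fix $s\in H_y$; then $s.\Phi^{-1}(y)=\Phi^{-1}(s[y])=\Phi^{-1}(y)$, so the transported points $s.x_i$ again lie in the fiber. In the integral for $(s.x_i,s.x_j)$ I would substitute $h=sk$: left-invariance of $dh$ gives $dh=dk$, while the fact that $\alpha$ and $\beta$ are characters gives $\alpha(h)\beta(h)=\alpha(s)\beta(s)\,\alpha(k)\beta(k)$, so the integral equals $(\alpha(s)\beta(s))^{-1}$ times the integral for $(x_i,x_j)$. It then remains to match the right-hand sides, for which I would use the transformation law of the Jacobian: taking determinants in the intertwining relation~\eqref{Hinvariance} (and recalling $|\det(y\mapsto s[y])|=\alpha(s)^{-1}$ by~\eqref{eq:40} and $|\det s_*|=\beta(s)$ by (H1)) gives $J(\Phi)(s.x_i)=J(\Phi)(x_i)/(\alpha(s)\beta(s))$, which is just the point-mass form of~\eqref{alfabeta}. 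The two factors of $(\alpha(s)\beta(s))^{-1}$ cancel, proving the relation for $(s.x_i,s.x_j)$.

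All of this is essentially bookkeeping once Theorems~\ref{adm1} and~\ref{corcoar} are granted; the only point requiring genuine care is the passage from the coarea disintegration to the explicit point-mass description of $\nu_y$ and the consistent tracking of the weights $J(\Phi)(x_i)^{-1}$ and the complex conjugations, so I do not expect a real obstacle here.
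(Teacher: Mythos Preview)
Your proposal is correct and follows the same route as the paper: both invoke Theorem~\ref{adm1}, write $\nu_y=\sum_i (J\Phi)(x_i)^{-1}\delta_{x_i}$ via~\eqref{eq:20}, and reduce~\eqref{FIRST} to the stated orthogonality conditions by testing on the canonical basis of $L^2(X,\nu_y)\simeq\C^M$ (the paper phrases this step as ``arguing as in the proof of Proposition~\ref{compatto}''). The only difference is in the covariance claim: the paper uses that $H_y$ is compact (established in the proof of Theorem~\ref{neqd}), whence $\alpha(s)=\beta(s)=1$ for $s\in H_y$ and $(J\Phi)(s.x_i)=(J\Phi)(x_i)$, making the check immediate; your direct substitution tracking the character $\alpha\beta$ on both sides works just as well and does not appeal to compactness.
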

\begin{proof}
We apply Theorem~\ref{adm1}. Given $z\in Z$ and
$y\in\pi^{-1}(z)$ for which~\eqref{FIRST} holds true,
formula~\eqref{eq:20} gives that  
\[
\nu_{y}=\sum_{i=1}^M
  \frac{\delta_{x_i}}{(J\Phi)(x_i)}.
  \]
Arguing as in the proof of
Proposition~\ref{compatto},~\eqref{FIRST} is equivalent to
\[
\int_H \eta(h^{-1}.x_{i})\overline{ \eta(h^{-1}.x_{j})}
\frac{dh}{\alpha(h)\beta(h)}= (J\Phi)(x_{i}) \delta_{ij}\qquad
i,j=1,\ldots N_{y}.\]
The last claim is clear because $H_{y}$ is compact so that for all
$s\in H_{z}$ we have $\alpha(s)=\beta(s)=1$ and hence  the equality
\[ (J\Phi)(h.x)= (J\Phi)(x) \alpha(h)^{-1}\beta(h)^{-1}\qquad h\in H.\]
\end{proof}

As an example, we apply the above corollary to the metaplectic
representation restricted to the shearlet group $G=TDS(2)$. 
Notice that
\[ (J\Phi)(x_1,x_2)=x_1^2/2\qquad \alpha(\ell,t)=t^{1+\gamma}\qquad\beta(\ell,t)=t^{-\gamma}.\]
We set $X=\{(x_1,x_2)\in\R^2\mid x_1\neq 0\}$, which is an
$H$-invariant open set with full Lebesgue measure and
$Y=\Phi(X)=\R_-\times \R$,  is a transitive free $H$-space. We
choose as origin the point $y_0=(-1/2,0)$ so that
$\Phi^{-1}(y_0)=\{(\pm 1,0)\}$. Since for any $h=(\ell,t)\in H$ 
\[ h^{-1}.(1,0)=(t^{\frac 12},  t^{\gamma-\frac 12}\ell),\]
a function $\eta\in L^2(X)$ is an admissible vector if and only if
\begin{align}
   & \int_{(0,+\infty)\times\R}\lvert \eta( t^{\frac 12},
   t^{\gamma-\frac 12}\ell)\rvert^2\,\frac{dt
     d\ell}{t^{3-\gamma}}  = \frac{1}{2}\label{shear1}\\
& \int_{(0,+\infty)\times\R}\lvert \eta( -t^{\frac 12},
   -t^{\gamma-\frac 12}\ell)\rvert^2\,\frac{dt
     d\ell}{t^{3-\gamma}}  =  \frac{1}{2}\label{shear2}\\
   & \int_{(0,+\infty)\times\R}  \eta(t^{\frac 12},  t^{\gamma-\frac 12}\ell) \overline{\eta(-t^{\frac 12}, - t^{\gamma-\frac 12}\ell)}\,\frac{dt d\ell}{t^{3-\gamma}} = 0.\label{shear3}
\end{align}
To recover the usual admissibility condition, put
$X_{\pm}=\{(x_1,x_2): \pm x_1>0\}$ and define  the unitary operator
$R_{\pm}:L^2(Y)\to L^2(X_{\pm})$
\[ (R_{\pm}\hat f)(x_1,x_2)=\hat f(\Phi(x_1,x_2) )\lvert
J\Phi(x_1,x_2)\rvert^{\frac 12},\]
so that 
\[ R_{\pm}^{-1}U_{(a;\ell,t)} R_{\pm} \hat f (y)= t^{(1+\gamma)/2}
e^{-2\pi i
  \scal{y}{a} } \hat f(t y_1,t^{\gamma}(\ell y_1+y_2)),\]
which clarifies the connection with the shearlet representation, see
\cite{gukula06}. 
Denote by $\hat \eta_{\pm}= R_{\pm}^{-1} \eta_{|X_{\pm}}$,  equations \eqref {shear1} and \eqref {shear2} become
\[ 
\int_{Y} \lvert\hat\eta_{\pm}(-\frac12 t,-\frac12
t^{\gamma}\ell)\rvert^2\frac{t^{\gamma -2}}{2}dt d\ell
=\frac 12.
\]
With the change of variables $\omega_1= -\frac12 t$ and
$\omega_2=-\frac12t^{\gamma} \ell$, whose Jacobian is $\frac{1}{4}t^{\gamma}$, they become
\begin{align*}
& \int_{\R_+\times\R}\lvert \hat\eta_{\pm}( \omega_1,\omega_2)\rvert^2\,\frac{d\omega_1 d\omega_2}{\omega_1^2}  =  1 .
\end{align*}
Similarly, \eqref {shear3} becomes
\begin{align*}
  & \int_{\R_+\times\R}\hat\eta_+(\omega_1, \omega_2) \overline{\eta_-(\omega_1, \omega_2) }\, \frac{d\omega_1 d\omega_2}{\omega_1^2}   = 0.
\end{align*}
One should compare this with formula (2.1) in \cite{kula09}.
Note that $U$ is equivalent to two copies of the irreducible
representation $\operatornamewithlimits{Ind}_{\R^2}^{G}(\chi)$, where
$\chi$ is the character of $\R^2$ $(a_1,a_2)\mapsto e^{\pi i a_1} $. 

\subsection{Example~\ref{n>d}}
With the choice $X=\R^2\setminus\{0\}$ and $Y=\Phi(X)=(0,+\infty)$ Assumption~1 is satisfied because $X$ is an $H$-invariant open set whose complement has zero Lebesgue
  measure. The group $H$ acts freely on $Y$ so that Assumption~2 holds true and 
  $Z$ reduces to a singleton.  
We choose  $y_0=1$ as the origin of the orbit, whose stabilizer is the compact group
$H_1=\T$. Since $G$ is non-unimodular, $U$ is
reproducing by Theorem~\ref{MAIN2}. In order to characterize its
admissible vectors note that in Theorem~\ref{Tmackey} the
relatively invariant measure on $Y$ is $\tau_1=dy$. Furthermore, the map
$  \xi\mapsto (\cos\xi, \sin\xi)$
is  diffeomorphism of $S^1$ onto the Riemannian submanifold
$\Phi^{-1}(1)=\{x_1^2+x_2^2=1\}$. The Riemannian measure on $S^1$ is
$d\xi$ so that, for all $\varphi\in C_c(X)$ 
$$ \int_X \varphi(x_1,x_2) d\nu_1(x_1,x_2) 
= \int_0^{2\pi} \varphi(\cos\xi,
\sin\xi) \frac{d\xi}{2}.$$
Put $h(y)=(\sqrt{y},0)$ so that $h(y)[1]=y$. Then~\eqref{weil1} says that  the Haar measure 
on  $\T$ is $d\theta/4\pi$ because
$$ \int_H \varphi(t,\theta)  t dt \frac{d\theta}{2\pi} =\int_0^{+\infty}
\left(\int_0^{2\pi} \varphi(\sqrt{y},\theta)
  \frac{d\theta}{4\pi}\right)dy, $$
so that $\operatorname{vol}{\T}=\frac{1}{2}$.\\
The representation $\Lambda_1$ of $\T$
  on $L^2(X,\nu_1)\simeq L^2(S^1,d\xi/2)$ is the  regular
  representation, and
  \begin{align*}
    L^2(X,\nu_1) & \simeq \bigoplus_{n\in\mathbb Z} \C\,\{ e^{ i n\xi} \}\\
    \Lambda_{1,\theta} & \simeq \bigoplus_{n\in\mathbb Z} e^{ -i n\theta},
  \end{align*}
where each component is irreducible and any  two of them are inequivalent.

Since any $g=(a,t,\theta)$ can be written as  $g=(0,t,0)(t^2
  a,0,\theta)$, any function $F\in \cH$ can be
  identified with its restriction to $\R_+$ due to~(K2). Further, (K3)
becomes
$$ 
\int_0^\infty \lvert F(\sqrt{y})\rvert^2 y^{-1} dy =  \int_0^\infty
\lvert F(t)\rvert^2 2t^{-1} dt <+\infty.
$$
Hence  we have the following unitary identifications
$$\cH\simeq L^2(\R_+,2 t^{-1} dt, L^2(S^1,d\xi/ 2)) \simeq L^2(\R_+\times S^1,
t^{-1}dt d\xi ).$$
The unitary map $S:L^2(X)\to\cH$ is given explicitly by
$$ 
(S f)(t,\xi)=t \, T_{t^2,t^{-1}}( f_{1,t^2})(\xi)= t
f(t\cos\xi,t\sin\xi) .
$$
For $n\in\mathbb Z$, the space $\cH_{n}$ carrying the
  representation induced by $e^{-2\pi i a-i n\theta}$ is
$$
\cH_{n}=\{ F\in L^2(\R_+\times \T,
t^{-1}dt d\xi) \mid F(t,\xi)=F_n(t) e^{i n\xi}, F_n\in L^2(\R_+,t^{-1}dt)\}.
$$
If $\eta\in L^2(X)$,  then $S\eta=\sum_{n\in\mathbb Z} F_n e^{i
  n\xi} $ with  $F_n\in  L^2(\R_+,t^{-1}dt)$. It follows that  
$\eta$ is an admissible vector if and only if, for any $n\in\mathbb Z$,
$$\int_0^{+\infty}   \left(\int_{S^1}\lvert F_n(\sqrt{y}) e^{in\xi}\rvert^2\frac{d\xi}{2} \right)y^{-2} dy=\frac{\dim{\cK_{n}}}{\operatorname{vol}\T}
=2,$$
since $\dime{\cK_{n}}=1$.  By the change of variable $t=\sqrt{y}$, this is equivalent to 
$$\int_0^{+\infty}   \lvert F_n(t)\rvert^2 t^{-3} dt =\frac{1}{\pi}. $$
Finally, since
$$ F_n(t)=\frac{1}{2\pi}\int_0^{2\pi} t \eta(t\cos \xi,t\sin\xi)
e^{-in\xi}d\xi =:t \hat{\eta}(t,n),$$
the set of admissible vectors consists of  the Lebesgue measurable
functions $\eta:\R^2\to\C$ such that
\begin{align*}
& \sum_{n\in\mathbb Z} \int_0^{+\infty} \lvert \hat{\eta}(t,n)\rvert^2
t dt<+\infty\iff \eta\in L^2(\R^2)\\
& \int_0^{+\infty}   \lvert \hat{\eta}(t,n)\rvert^2 t^{-1} dt
=\frac{1}{\pi}\qquad \forall n\in\mathbb Z. 
\end{align*}

\subsection{Example~\ref{n<dnocom}}
 In this example Assumption~1 is satisfied with the choice 
$X=\R^2\setminus\{x_2=0\}$ and $Y=\Phi(X)=\R\setminus\{0\}$, because
$X$ is a $H$-invariant open set whose complement has zero Lebesgue
  measure. The group $H$ acts freely on $Y$ so that Assumption~2 holds true and 
  $Z$ reduces to a singleton.
We choose  $y_0=1$ as the origin of the orbit so that the
corresponding stabilizer is the non-compact group
$H_1=\R^*$.  To prove that $G$ a reproducing group, we use Theorem~\ref{adm2}. 
In Theorem~\ref{Tmackey} the
relatively invariant measure on $Y$ is $\tau_1=dy$. Furthermore, the map
$\xi\mapsto (\xi,  1)$
is  a diffeomorphism of $\R$ onto the Riemannian submanifold
$\Phi^{-1}(1)=\{x_2=1\}$. The Riemannian measure on $\R$ is
$d\xi$ and $(J\Phi)(x)=1$, so that  \eqref{eq:20} gives  for all $\varphi\in C_c(X)$ 
$$ \int_X \varphi(x_1,x_2) d\nu_1(x_1,x_2) 
= \int_\R \varphi(\xi,
1) d\xi.$$
Put $h(y)=(y,0)$ so that $h(y)[1]=y$. Then \eqref{weil1} says that  the Haar measure  on
  $\R$ is $db$ because
$$ \int_H \varphi(t,b)  \lvert t\rvert \frac{dt}{\lvert t\rvert } db=\int_{\R^*}
\left(\int_\R \varphi(y,b)\,db\right)dy.$$
The representation $\Lambda_1$ of $\R$
  on $L^2(X,\nu_1)\simeq L^2(\R,d\xi)$ is the  regular
  representation, and
    \begin{align*}
    L^2(X,\nu_1) & \simeq \int_{\R} \C \,d\omega\\
    \Lambda_{1,b} & \simeq \int_{\R} e^{ -2\pi i\omega b} \,d\omega\\
  \end{align*}
where each component is irreducible, any two of them are inequivalent and the intertwining operator is given by the  Fourier transform.

Since any $g=(a,t,b)\in G$ can be written as  $g=(0,t,0)(ta,0,b)$,  any function $F\in \cH$ can be
  identified with its restriction to $\R^*$ due to~(K2) and we have the following unitary identifications
$$\cH\simeq L^2(\R^*, t^{-1} dt, L^2(\Phi^{-1}(1),\nu_1)) \simeq L^2(\R^2,
y^{-1}dy d\xi ).$$
The unitary map $S:L^2(X)\to L^2(\R^2,y^{-1}dy d\xi )$ is given explicitly by
$$ 
(S f)(y,\xi)=\lvert t\rvert^{\frac12} f(y,\xi) .
$$
Theorem~\ref{adm2} implies that $\eta\in L^2(X)$ is an admissible vector if and only if for all $u\in L^2(\R,d\xi)$
\begin{align*}
  \int_\R \lvert u(\xi)\rvert^2\,d\xi & = \int_\R \left(\int_\R \lvert\scal{u}{\lvert y\rvert^{-\frac12}\Lambda_{1,b} (S\eta)(y,\cdot)}\rvert^2\,db\right) \lvert y\rvert^{-1 }dy \\
& = \int_\R \left(\int_\R  \lvert \hat{u}(\omega)\rvert^2 \lvert\hat{\eta}(y,\omega)\rvert^2 d\omega\right)  \lvert y\rvert^{-1 }dy 
  \end{align*}
where we use that $\Delta(h(y))= \alpha(h(y))^{-1}=\lvert y \rvert$ and where $\hat{}$ denotes the Fourier transform with respect to $\xi$.
It follows that the set of admissible vectors is the set of Lebesgue measurable
functions $\eta:\R^2\to\C$ such that
\begin{align*}
&  \int_\R \left(\int_\R \lvert\hat{\eta}(y,\omega)\rvert^2 d\omega\right)  dy <+\infty\iff \eta\in L^2(\R^2)\\
&\int_\R \lvert\hat{\eta}(y,\omega)\rvert^2 \lvert y\rvert^{-1 }dy =1 \qquad \text{for almost every }\omega\in\R.
\end{align*}
This set is clearly non empty: take for example  any strictly positive
continuous function $\sigma\in L^1(\R)$ and define 
\[
\widehat{\eta}(y,\omega)=\left(\frac{1}{\sqrt{2\pi}\sigma(\omega)} \lvert y\rvert e^{-\frac{y^2}{2\sigma(\omega)^2}}\right)^{\frac12}.
\]

\appendix
\section{Appendix: some measure theory revisited}\label{MT}
In this Appendix we review some known facts that are somehow hard to locate in the literature in a way that is both easily accessible and stated under the assumptions that we are making.  The  spaces $X$ and $Y$ are as in Section~\ref{mainresults} and are regarded as measure spaces with respect to the Lebesgue measure, denoted $dx$ and $dy$ respectively.
\subsection{Disintegration of measures}\label{DIS}We start by adapting to our setting some facts from integration theory on general locally compact spaces.  The main reference for the issues at hand is \cite{bourbaki_int}. 
Hereafter, $C_c(X)$ denotes the space of compactly supported continuous functions on $X$, endowed with the usual locally convex (separable) inductive limit topology, for which a sequence $(\varphi_n)_{n\in\mathbb N}$ in $C_c(X)$ converges to zero if there exists a compact set $K$ such that $\text{supp}\,\varphi_n\subset K$ for all $n$  and $\lim_{n\to\infty}\sup_{x\in
  K}\lvert\varphi_n(x)\rvert=0$. We  denote by $M(X)$ the topological dual of $C_c(X)$; when equipped  with the $\sigma(M(X),C_c(X))$-topology,  the  topological dual of $M(X)$  is again $C_c(X)$ (\cite{RS80}, Th.~IV.20).  Since $X$ is second countable, the Riesz-Markov representation theorem uniquely identifies the measures with the positive elements of $M(X)$. By the word measure on a locally compact second countable topological space, we mean a positive measure defined on the Borel $\sigma$-algebra, which is finite on compact subsets.

The following theorem, in some sense a version of Fubini's theorem,
summarizes the main properties of the kind of disintegration of
measures we are concerned with. The main point here, though, is the
possibility of extending the disintegration from $C_c$ to $L^1$. We
state it for $X$ and $Y$, but it also  holds {\it verbatim} if we replace $X$ and $Y$ with two arbitrary locally compact second countable topological spaces.
\begin{theorem}\label{th:2}
Suppose that $\omega$ is a  measure on $X$ and  $\rho$ a measure on $Y$ and let $\Psi:X\to Y$ be a $\omega$-measurable map. Assume further that $\{\omega_y\}$ is a family of
measures on $X$ such that
\begin{itemize}
\item[(a)] $\omega_y$ is concentrated on $\Psi^{-1}(y)$ for all $y\in Y$;
\vskip0.2truecm
\item[(b)] ${\displaystyle\int_X \varphi(x)d\omega(x)
=\int_Y\left(\int_X\varphi(x)d\omega_y(x)\right)d\rho(y)}$
for all $\varphi\in C_c(X)$.
\end{itemize}
Then, for any $\omega$-measurable function $f:X\to\mathbb C$ the following facts hold true:
\begin{itemize}
\item[(i)]  $f$ is  $\omega_y$-measurable  for almost every $y\in Y$;
\item[(ii)] $f$ is $\omega$-integrable if and only if  $\displaystyle{ \int_Y\left(\int_X\lvert
    f(x)\rvert d\omega_y(x)\right)d\rho(y)}$ is finite;
\item[(iii)] if $f$ is $\omega$-integrable, then $f$ is $\omega_y$-integrable for $\rho$-almost every $y\in Y$, the function (defined almost everywhere) $y\mapsto\int_X f(x)d\omega_y(x)$ is $\rho$-integrable, and
\begin{equation}
\int_X f(x)d\omega(x)=\int_Y\left(\int_X f(x)d\omega_y(x)\right)d\rho(y)\label{eq:3};
\end{equation}
\item[(iv)] if  $\{\omega'_y\}$  is another family of measures on $X$ satisfying (a) and (b), then $\omega'_y=\omega_y$ for $\rho$-almost all $y\in Y$.
\end{itemize}
\end{theorem}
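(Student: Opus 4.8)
The plan is to bootstrap the hypothesis (b), which is assumed only on $C_c(X)$, up to all $\omega$-measurable functions, and then to read off the four assertions in turn. Throughout I take for granted the measurability that is already built into the statement of (b): for the right-hand side of (b) to be meaningful, the map $y\mapsto\int_X\varphi\,d\omega_y$ must be $\rho$-measurable (indeed $\rho$-integrable) for every $\varphi\in C_c(X)$, so I shall use this freely.

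First I would extend (b) to nonnegative lower semicontinuous functions and thence to all nonnegative Borel functions. If $g\geq 0$ is lower semicontinuous, write $g=\sup_n\varphi_n$ with $\varphi_n\in C_c(X)$ and $0\le\varphi_n\uparrow g$; monotone convergence applied to both sides of (b) gives
\[
\int_X g\,d\omega=\int_Y\Bigl(\int_X g\,d\omega_y\Bigr)\,d\rho(y)
\]
as an identity in $[0,+\infty]$, the inner map $y\mapsto\int_X g\,d\omega_y$ being $\rho$-measurable as an increasing limit of $\rho$-measurable maps. Passing from lower semicontinuous to arbitrary nonnegative functions via upper integrals is exactly the content of the disintegration theory of \cite{bourbaki_int}; it yields the displayed formula (in $[0,+\infty]$, with $\rho$-measurable inner integral) for every nonnegative Borel $f$. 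I regard this extension as the technical core, and hence the main obstacle: it rests on the careful handling of upper integrals and the monotone-class passage underlying Bourbaki's theory, and is not a one-line consequence of (b).

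Granting this, the remaining points are routine. For (i), the crucial remark is that an $\omega$-null set $N\subset X$ satisfies $\omega_y(N)=0$ for $\rho$-almost every $y$: applying the extended formula to lower semicontinuous majorants of $\chi_N$ of arbitrarily small $\omega$-mass forces $\int_Y\omega_y^*(N)\,d\rho(y)=0$, hence $\omega_y^*(N)=0$ for $\rho$-almost every $y$. Combined with Lusin's theorem, which realizes an $\omega$-measurable $f$ as a Borel function off an $\omega$-null set, this shows that $f$ is $\omega_y$-measurable for $\rho$-almost every $y$. Assertion (ii) is immediate from the extended formula applied to $|f|$. For (iii), I would decompose $f$ into its four nonnegative parts $f_1^{\pm},f_2^{\pm}$; each is $\omega$-integrable by (ii), so the formula holds for each with all integrals finite, and the $\rho$-almost-everywhere finiteness of $y\mapsto\int_X f\,d\omega_y$, its $\rho$-integrability, and \eqref{eq:3} follow by linearity.

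Finally, for the uniqueness statement (iv), let $\{\omega_y'\}$ be a second family satisfying (a) and (b). Fix $\psi\in C_c(Y)$ and $\varphi\in C_c(X)$ and apply the extended formula to the $\omega$-measurable function $x\mapsto\psi(\Psi(x))\varphi(x)$; since $\omega_y$ is concentrated on $\Psi^{-1}(y)$ the factor $\psi(\Psi(x))$ equals $\psi(y)$ for $\omega_y$-almost every $x$, whence
\[
\int_X\psi(\Psi(x))\varphi(x)\,d\omega(x)=\int_Y\psi(y)\Bigl(\int_X\varphi\,d\omega_y\Bigr)\,d\rho(y).
\]
The identical computation for $\{\omega_y'\}$ has the same left-hand side, so $\int_Y\psi(y)\bigl(\int_X\varphi\,d\omega_y-\int_X\varphi\,d\omega_y'\bigr)\,d\rho(y)=0$ for every $\psi\in C_c(Y)$; as the bracketed function is $\rho$-integrable, it vanishes $\rho$-almost everywhere for each fixed $\varphi$. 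Choosing a countable dense subset of $C_c(X)$ and intersecting the corresponding $\rho$-null exceptional sets, the two positive functionals $\varphi\mapsto\int_X\varphi\,d\omega_y$ and $\varphi\mapsto\int_X\varphi\,d\omega_y'$ agree on all of $C_c(X)$ for $\rho$-almost every $y$, so the Riesz--Markov theorem gives $\omega_y=\omega_y'$ for $\rho$-almost every $y$, as claimed.
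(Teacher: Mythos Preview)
Your proposal is correct and follows essentially the same route as the paper. For (i)--(iii) the paper simply quotes the relevant results from Chapter~5 of \cite{bourbaki_int} (the $\rho$-adequacy of the family and the Fubini-type theorems for disintegrations), while you sketch the monotone-class bootstrap that underlies those results; your argument for (iv) --- testing against $\psi(\Psi(x))\varphi(x)$, using the concentration on fibers to pull out $\psi(y)$, and then passing to a countable dense subset of $C_c(X)$ --- is identical to the paper's.
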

\begin{proof}
The theorem is essentially contained in \cite{bourbaki_int}, scattered in several statements.  For the proof of (i), (ii) and (iii) we quote from Chapter~5, and for the proof of (iv) from Chapter~6.

Statement (i) is the content of~a) Prop.~4, \S~3.2, taking into account that, since it is second countable, $X$ is $\sigma$-compact and, a fortiori, $\omega$-moderated (a subset is $\omega$-moderated if  it is contained into the union of a countable sequence of compact subsets  and a $\omega$-negligible set).

As for (ii), since $X$ is second countable, Prop.~2, \S~3.1,  guarantees
that the family $\int_X\varphi(x)d\omega_y(x)$ is  $\rho$-adequate in the sense of Def.~1, \S~3.1. The equivalence of the two conditions in (ii)  is then the content of the Corollary at the end of
\S~3.2. 

As for (iii), it is just Th.~1, \S~3.3,  observing that any function is $\omega$-moderated since $X$ is $\omega$-moderated (a function is $\omega$-moderated if it is null on the complement of a $\omega$-moderated subset).

Finally, for (iv), by assumption $\int_Y \omega_yd\rho(y)=\int_Y \omega'_yd\rho(y)$, where the integral is a scalar integral of vector valued functions taking values in $M(X)$.  Now Lemma~1, \S~3.1 ensures that $C_c(X)$ has a countable subset  which is dense\footnote{\label{separa} It is proved there that there exists a  countable subset $S\subset C_c(X)$ such that for every $\varphi\in C_c(X)$ there is a sequence $(\varphi_n)_{n\in\mathbb N}$ in $S$ converging to $\varphi$ uniformly and $\lvert \varphi_n\rvert\leq \lvert \varphi_0\rvert$.} in $C_c(X)$ with respect to the  $\sigma(C_c(X),M(X))$ topology, so that, by Remark~2 in \S 1.1,  it is enough to show that for any $\varphi\in C_c(X)$ and for $\rho$-almost every $y\in Y$
$$
\int_X \varphi(x)d\omega_y(x)=\int_X \varphi(x)d\omega'_y(x).
$$
This   is in turn equivalent to proving that 
 \begin{equation}
\int_Y\left(\int_X \varphi(x)d\omega_y(x)\right)\xi(y)d\rho(y)=
  \int_Y\left(\int_X \varphi(x)d\omega'_y(x)\right)\xi(y)d\rho(y)\label{eq:4c}
\end{equation}
holds for all $\varphi\in C_c(X)$ and $\xi\in C_c(Y)$.   Fix then   $\varphi\in C_c(X)$ and $\xi\in C_c(Y)$, and put $f(x)=\xi(\Psi(x))\varphi(x)$. This function is $\omega$-measurable since $\Psi$ is $\omega$-measurable and $\xi$ and $\varphi$ are continuous, it is bounded since both $\xi$ and $\varphi$ are bounded, and it has a compact support since $\varphi$ is compactly
supported. Hence $f$ is $\omega$- integrable. Applying twice~\eqref{eq:3} we get
\begin{equation}
\int_Y \left(\int_X
  \xi(\Psi(x))\varphi(x)d\omega_y(x)\right)d\rho(y)
= \int_Y\left(\int_X\xi(\Psi(x))\varphi(x)d\omega'_y(x)\right)d\rho(y).\label{eq:22}
\end{equation}
Given $y\in Y$, (a) implies that $\xi(\Psi(x))=\xi(y)$ for $\omega_y$-almost all $x\in X$, so that
$$
\int_Y \left(\int_X
  \xi(\Psi(x))\varphi(x)d\omega_y(x)\right)d\rho(y)=\int_Y
\left(\int_X \varphi(x)d\omega_y(x)\right)\xi(y)d\rho(y),
$$
and similarly for the right hand side of~\eqref{eq:22}. Hence~\eqref{eq:4c} is true and the claim is proved.
\end{proof}

The integral formula (b) will be written for short
\begin{equation}
d\omega=\int_Y\omega_y\,d\rho(y).
\label{generaldis}
\end{equation}

\subsection{Direct integrals}\label{DI}
Next we recall the definition of direct integral, following \cite{fol95}. Hereafter we assume that  the hypotheses of Theorem~\ref{th:2} are satisfied.
 Fix a countable family $\{\varphi_k\}_{k\in\N}$ dense in $C_c(X)$, and hence also in every $L^2(X,\omega_y)$,  with $y\in Y$. The map $y\mapsto \scal{\varphi_k}{\varphi_{\ell}}_{\omega_y}$ is $\rho$-measurable since it is $\rho$-integrable by  hypothesis (b) of Theorem~\ref{th:2}. 
 Under these circumstances, $\{\varphi_k\}_{k\in\mathbb N}$ is called a $\rho$-measurable structure for the family of Hilbert spaces $\left\{L^2(X,\omega_y)\right\}$.  The direct integral $\int_Y L^2(X,\omega_y)dy$ is defined as the set consisting of all the families $\{f_y\}$  satisfying: 
\begin{itemize}
\item[(D1)] $f_y\in L^2(X,\omega_y)$ for all $y\in Y$;
\vskip0.2truecm
\item[(D2)] ${\displaystyle\int_Y \lVert f_y\rVert^2_{\omega_y} d\rho(y)<+\infty}$;
\vskip0.2truecm
\item[(D3)] $y\mapsto \scal{f_y}{\varphi_k}_{\omega_y}$ is $\rho$-measurable for all $k\in\N$.
\end{itemize}
\vskip0.2truecm
Two families $\cF=\{f_y\}$ and $\cG=\{g_y\}$ are identified if for almost every  $y\in Y$  $f_y=g_y$ as elements in $L^2(X,\omega_y)$. The space $\int_Y L^2(X,\omega_y)d\rho(y)$ is a Hilbert space under
$$
\scal{\cF}{\cG}=\int_Y
\scal{f_y}{g_y}_{\omega_y}d\rho(y).
$$
Since $C_c(X)$ has a dense countable subset, see Footnote~\ref{separa},  (D3) is equivalent to 
\begin{itemize}
\item[(D3')]  $y\mapsto \scal{f_y}{\varphi}_{\omega_y}$ is $\rho$-measurable for all $\varphi\in C_c(X)$,
\end{itemize}
so that, as long as we choose the functions of $\{\varphi_k\}_{k\in\mathbb N}$ in $C_c(X)$, the measurable structure is independent of the choice of the particular family.
\begin{proposition}\label{directintegral}
 Given  $f\in  L^2(X,\omega)$, there exists a  unique family $\{f_y\}$ in  the Hilbert space direct integral $\int_Y L^2(X,\omega_y)d\rho(y)$ such that, for almost every $y\in Y$, the equality $f_y(x)=f(x)$ holds for $\omega_y$-almost every $x\in X$. Furthermore, the map $f\mapsto \{f_y\}$ is a unitary operator from $L^2(X,\omega)$ onto $\int_Y L^2(X,\omega_y)d\rho(y)$.  
\end{proposition}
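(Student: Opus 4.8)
The plan is to realize the map concretely rather than by abstract extension: for $f\in L^2(X,\omega)$ fix an $\omega$-measurable representative and let $f_y$ be (the class of) its restriction to $\Psi^{-1}(y)$, then verify that $\{f_y\}$ lies in the direct integral and that $f\mapsto\{f_y\}$ is a surjective isometry. First I would settle well-definedness on classes and uniqueness simultaneously: if $f=\tilde f$ $\omega$-almost everywhere, the set $\{f\neq\tilde f\}$ is $\omega$-null, so applying Theorem~\ref{th:2}(iii) to its indicator together with the disintegration \eqref{generaldis} gives $\int_Y\omega_y(\{f\neq\tilde f\})\,d\rho(y)=0$, whence $f_y=\tilde f_y$ for $\rho$-almost every $y$. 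The same computation shows that any family with the asserted restriction property is determined $\rho$-almost everywhere, which is exactly the uniqueness claim.

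Next I would check the three defining conditions of the direct integral. Conditions (D1) and (D2) come from Theorem~\ref{th:2}(i)--(ii) applied to the $\omega$-integrable function $|f|^2$: this gives $f_y\in L^2(X,\omega_y)$ for $\rho$-almost every $y$ and, by Theorem~\ref{th:2}(iii),
\[
\int_Y\|f_y\|_{\omega_y}^2\,d\rho(y)=\int_Y\Big(\int_X|f|^2\,d\omega_y\Big)\,d\rho(y)=\int_X|f|^2\,d\omega=\|f\|_\omega^2<+\infty,
\]
which at once proves that the map is isometric. For (D3$'$) I would apply Theorem~\ref{th:2}(iii) to $f\overline{\varphi_k}\in L^1(X,\omega)$ (integrable by Cauchy--Schwarz, since $\varphi_k\in C_c(X)\subset L^2(X,\omega)$), so that $y\mapsto\int_X f\overline{\varphi_k}\,d\omega_y=\scal{f_y}{\varphi_k}_{\omega_y}$ is $\rho$-integrable, hence $\rho$-measurable. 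Thus $\{f_y\}$ is a genuine element of $\int_Y L^2(X,\omega_y)\,d\rho(y)$, and existence together with the isometry property is established.

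The main work is surjectivity. As the map is an isometry its range is closed, so it suffices to show the range is dense, equivalently that its orthogonal complement is trivial. I would first note that the range contains every element of the form $\{c(y)(\varphi_k)_y\}$ with $c\in L^\infty(Y,\rho)$: indeed $(c\circ\Psi)\varphi_k\in L^2(X,\omega)$, and since $\omega_y$ is concentrated on $\Psi^{-1}(y)$ one has $\Psi(x)=y$ for $\omega_y$-almost every $x$, so the fiber restriction of $(c\circ\Psi)\varphi_k$ equals $c(y)(\varphi_k)_y$. Now suppose $\{g_y\}$ is orthogonal to all such elements. For each fixed $k$ the function $y\mapsto\scal{(\varphi_k)_y}{g_y}_{\omega_y}$ lies in $L^1(Y,\rho)$ (by Cauchy--Schwarz and (D2)) and integrates to zero against every bounded $c$; since $Y$ is $\sigma$-compact, $\rho$ is $\sigma$-finite and this forces $\scal{(\varphi_k)_y}{g_y}_{\omega_y}=0$ for $\rho$-almost every $y$.

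Finally I would combine these fiberwise identities. Discarding the union of the countably many $\rho$-null sets (one per $k$), for $\rho$-almost every $y$ we obtain $g_y\perp(\varphi_k)_y$ for all $k$; as $\{\varphi_k\}$ is dense in $C_c(X)$ and hence in $L^2(X,\omega_y)$, this yields $g_y=0$ for $\rho$-almost every $y$, i.e. $\{g_y\}=0$. Hence the range is all of the direct integral and the map is unitary. The only delicate point is precisely this last interchange of quantifiers, passing from ``for each $k$, $\rho$-a.e.\ $y$'' to ``$\rho$-a.e.\ $y$, for all $k$'': this is legitimate only because the index set is countable, which is where the separability of $C_c(X)$ recorded in Footnote~\ref{separa} enters in an essential way.
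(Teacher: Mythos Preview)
Your argument is correct. The treatment of well-definedness, (D1)--(D3$'$), and the isometry is essentially the same as the paper's; the genuine difference is in the surjectivity step. The paper proves surjectivity \emph{constructively}: given a positive family $\{f_y\}$ in the direct integral it forms the measure $\mu=\int_Y (f_y\cdot\omega_y)\,d\rho(y)$, shows via the Lebesgue--Nikodym theorem that $\mu$ has base $\omega$, and takes the resulting density $f$ as the preimage, then invokes Theorem~\ref{th:2}(iv) to identify $f$ with $\{f_y\}$ fiberwise. You instead argue \emph{functionally}: the range is closed (isometry) and contains all families $\{c(y)(\varphi_k)_y\}$ with bounded measurable $c$, and orthogonality to these forces $\scal{(\varphi_k)_y}{g_y}_{\omega_y}=0$ $\rho$-a.e.\ for each $k$, hence (by countability of $\{\varphi_k\}$ and density in each $L^2(X,\omega_y)$) $g_y=0$ $\rho$-a.e. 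Your route is shorter and avoids the Radon--Nikodym machinery, at the price of being non-constructive; the paper's route has the advantage of exhibiting the preimage explicitly, which fits its later use of the identification $f\cdot\omega=\int_Y f_y\cdot\omega_y\,d\rho(y)$ in $M(X)$. Both approaches ultimately lean on the same separability input from Footnote~\ref{separa}.
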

\begin{proof}
By hypothesis (b) of Theorem~\ref{th:2},  for every $\varphi\in C_c(X)$ we have
$$
\int_X\varphi(x)d\omega(x)
=\int_Y\left(\int_{X} \varphi(x)d\omega_y\right)d\rho(y).
$$
 Given a function\footnote{Here it is important that $f$ is a function, and not an equivalence class modulo a.e. equality.} $f:X\to \mathbb C $ which is square-integrable with respect to $\omega$, hence in particular $\omega$-measurable, (i) of  Theorem~\ref{th:2}  implies that $f$ is $\omega_y$-measurable for  almost every $y\in Y$. Further, since $|f|^2$ is integrable with respect to $\omega$,  (iii) of  the same theorem
ensures that $|f|^2$ is  $\omega_y$-integrable for almost all $y\in Y$, the map $y\mapsto \int_X\lvert f(x)\rvert^2d{\omega_y}(x) $ is  integrable, and  
\begin{equation}\label{eq:4}
\int_X \lvert f(x)\rvert^2\,d\omega(x)=\int_Y \left(\int_X\lvert f(x)\rvert^2 d{\omega_y}(x)\right)d\rho(y).
\end{equation}
Hence there is a $\rho$-full set $Y'\subset Y$ such that, if $y\in Y'$,  $f$ is square-integrable with respect to $\omega_y$. For $y\in Y'$ define $f_y$ to be the equivalence class of $f$ in $L^2(X,\omega_y)$ and, for $y\not\in Y'$, put $f_y=0$. 

We claim that $\cF=\{f_y\}$ is in $\int_Y L^2(X,\omega_y)d\rho(y)$. By~\eqref{eq:4}, conditions (D1)  and (D2) are clearly satisfied. To prove (D3'), take $\varphi\in C_c(X)$. Clearly, $f\overline{\varphi} $ is $\omega$-integrable and hence, by (iii) of  Theorem~\ref{th:2}, it is $\omega_y$-integrable for almost every $y\in Y$ and 
$$
y\mapsto \int_X f(x)\overline{\varphi(x)}d\omega_y(x)=\scal{f_y}{\varphi}_{\omega_y}
$$
is integrable, hence measurable. Therefore  $f\mapsto \cF$ is a well defined map from the space of  square-integrable functions on $X$ to $\int_YL^2(X,\omega_y)d\rho(y)$, it is linear and,  by \eqref{eq:4}, 
\begin{equation}
\int_X\lvert f(x)\rvert^2d\omega(x)=\int_Y \lVert f_y\rVert^2_{\omega_y}d\rho(y).
\label{elltwo}
\end{equation}
Hence, it defines an isometry from $L^2(X,\omega)$ into $ \int_YL^2(X,\omega_y)d\rho(y)$ and, by construction,  for almost every $y\in Y$, the equality $f_y(x)=f(x)$ holds for $\omega_y$-almost every $x\in X$.

We claim that the isometry  $f\mapsto \cF$ is surjective. It is enough to prove that for any family $\cF$ whose members $f_y$ are positive, there exists a positive   $f\in L^2(X,\omega)$ such that, for almost every $y\in Y$, the equality $f_y(x)=f(x)$ holds for $\omega_y$-almost every $x\in X$. Take then such an $\cF$.
First of all, we show  that the family of measures $\{f_y\cdot\omega_y\}$ is scalarly integrable with respect to $\rho$.  This is equivalent to saying that for all $\varphi\in C_c(X)$ the function $y\mapsto F_\varphi(y)=\int_X\varphi(x)f_y(x)d\omega_y(x)$, certainly well defined  because  (D1) implies that  $\varphi f_y$ is $\omega_y$-integrable for every $y\in Y$, is $\rho$-integrable. Indeed,~(D3') says that $F_\varphi$ is 
$\rho$-measurable, whereas H\"older's inequality and Cauchy-Schwartz give
\begin{align*}
\int_Y \lvert F_\varphi (y)\rvert d\rho(y)
&\leq \int_Y\lVert\varphi\rVert_{\omega_y}\lVert f\rVert_{\omega_y} d\rho(y)\\
&\leq \left(\int_Y \lVert\varphi\rVert^2_{\omega_y}d\rho(y)\right)^{1/2}
\left( \int_Y\lVert f\rVert^2_{\omega_y} d\rho(y)\right)^{1/2}
\end{align*}
so that by (D2) and~\eqref{elltwo} applied to $\varphi$ yield
$$
\int_Y \lvert F_\varphi (y)\rvert d\rho(y)
\leq C \lVert\varphi\rVert<+\infty.
$$
Hence the claim is proved and  $\mu=\int_Y (f_y\cdot\omega_y)d\rho(y)$ defines a measure. We show next that $\mu$ is a measure with base\footnote{A measure which is the product  $\psi\cdot\cL$ of a measure $\cL$ by a locally $\cL$-integrable positive function $\psi$ is called a measure with base $\cL$ (see Def.~2, \S~5.2, Ch.~V in \cite{bourbaki_int}).}$\omega$. This will produce the
required $f$ that maps to $\cF$. The Lebesgue-Nikodym theorem (see Th.~2 ,\S~5.5, Ch.~5 of \cite{bourbaki_int}) ensures that it is enough to prove that  any compact subset $K\subset X$ for which $\omega(K)=0$ satisfies $\mu(K)=0$. Take such a $K$. Item (iii) of Theorem~\ref{th:2} applied to the characteristic function $\chi_K$ gives that for almost every $y\in Y$, $K$ is $\omega_y$-negligible and, {\it a fortiori}, $f_y\cdot\omega_y$-negligible. Thus, \eqref{eq:3}  with $\omega=\mu$, $\omega_y=f\cdot\omega_y$ and $f=\chi_K$ yields
$$
\mu(K)=\int_Y  \left(\int_K f_y(x)d\omega_y(x)\right)d\rho(y)=0.
$$
Hence there exists a locally integrable positive function $f$ such that $f\cdot\omega=\mu$. Moreover, if $\varphi\in C_c(X)$, $\varphi f$ is integrable, so that again (iii) of Theorem~\ref{th:2} tells us  that, for almost every $y\in Y$, $\varphi f$ is $\omega_y$-integrable, the map $y\mapsto \int_X\varphi(x)f(x)\,d\omega_y(x)$ is integrable and by definition of $\mu$
\begin{align*}
 \int_Y \left(\int_X\varphi(x) f_y(x)\,d\omega_y(x)\right)d\rho(y)
&=\int_X \varphi(x) \,d\mu(x)\\
&= \int_Y \left(\int_X\varphi(x) f(x)\,d\omega_y(x)\right)d\rho(y).
\end{align*}
By the above equality, (iv) of Theorem~\ref{th:2}  may be applied to infer that for almost every $y\in Y$ the equality $f=f_y$ holds  $\omega_y$-almost everywhere. Finally, (D2) gives
$$
\int_Y\left(\int_X\lvert f(x)\rvert^2 d\omega_y(x)\right)d\rho(y)
= \int_Y\left(\int_X\lvert f_y(x)\rvert^2 d\omega_y(x)\right)d\rho(y)<+\infty.
 $$
Hence (iii) of Theorem~\ref{th:2} implies that $f$ is square integrable. The  equivalence class of $f$ in $L^2(X,\omega)$ is then the  element required to prove surjectivity.
\end{proof}

Both  $L^2(X,\omega)$ and each of  the spaces $L^2(X,\omega_y)$ can be identified with subspaces of $M(X)$ simply by  viewing their elements as continuous linear functionals on $C_c(X)$ via  integration with respect to $\omega$ and $\omega_y$, respectively. Further, (iv) of Theorem~\ref{th:2} implies that saying that for almost every $y\in Y$ the equality $f_y(x)=f(x)$ holds for $\omega_y$-almost every $x\in X$ is equivalent to 
$$
f\cdot\omega=\int_Y(f_y\cdot\omega_y)\,d\rho(y),
$$
in the sense that the map $Y\to M(X)$, $y\mapsto f_y\cdot\omega_y$ is  $\rho$-scalarly-integrable.
These remarks together with Proposition~\ref{directintegral} imply that
\begin{equation}
L^2(X,\omega)=\int_Y L^2(X,\omega_y)d\rho(y)
\label{L2}
\end{equation}
by means of the equality in $M(X)$
\begin{equation}
f=\int_Y f_y d\rho(y),
\label{L22}
\end{equation}
where the integral is a scalar integral.

\subsection{The coarea formula for submersions}\label{CF}  Below we give a simple proof of the Coarea Formula for submersions; the general case is due to Federer~\cite{Federer69}.
Suppose that  $n\leq d$ and let  $X\subset\R^d$ be an open set. Recall that a  $C^1$-map $\Phi:X\to\R^n$ is called a submersion if its differential $\Phi_{*x}$ is surjective for all $x\in X$. For every  $y\in Y=\Phi(X)$,  let $dv^y(x)$ denote the volume element of the Riemannian submanifold  $\Phi^{-1}(y)$ and by $J\Phi$ the Jacobian. We introduce the measure  $\nu_y$ on $X$ by
\begin{equation}
\label{eq:20}
\nu_y(E)=
\int_{\Phi^{-1}(y)\cap E} \frac{dv^y(x)}{(J\Phi)(x)},
\qquad E\in\cB(X).
\end{equation}
It is worth observing that $\nu_y$ is finite on compact sets and   concentrated on $\Phi^{-1}(y)$.

\begin{theorem}[Coarea formula for submersions]\label{coar-form}
Suppose that  $\Phi:X\to\R^n$ is a submersion. Then 
\begin{equation}
  \label{coareformula}
dx = \int_{Y}d\nu_y\,dy ,
\end{equation}
where  $dx$ and $dy$ are the Lebesgue measures on $\R^d$ and $\R^n$, respectively.
\end{theorem}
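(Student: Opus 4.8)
The plan is to verify the formula in the disintegrated sense prescribed by Theorem~\ref{th:2}, i.e. to prove that for every $\varphi\in C_c(X)$
\[
\int_X \varphi(x)\,dx=\int_{Y}\Bigl(\int_{\Phi^{-1}(y)}\frac{\varphi(x)}{(J\Phi)(x)}\,dv^y(x)\Bigr)\,dy.
\]
Both sides are linear in $\varphi$, and the support of $\varphi$ is compact, so using a smooth partition of unity subordinate to a finite subcover I would first reduce to the case in which $\supp\varphi$ is contained in a single chart domain $W$ for which there is a diffeomorphism $\Psi:U\times V\to W$, with $U\subset\R^{d-n}$ and $V\subset Y$ open, satisfying $\Phi(\Psi(z,y))=y$. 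This submersion normal form is precisely the local structure already exploited in Lemma~\ref{regular} and in the proof of Theorem~\ref{corcoar}.

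On such a chart the change of variables $x=\Psi(z,y)$ turns the left-hand side into
\[
\int_{V}\Bigl(\int_U \varphi(\Psi(z,y))\,\lvert\det D\Psi(z,y)\rvert\,dz\Bigr)\,dy,
\]
where $D\Psi=[\,\partial_z\Psi\mid\partial_y\Psi\,]$ is the full $d\times d$ Jacobian matrix. On the right-hand side the map $z\mapsto\Psi(z,y)$ parametrizes the fiber $\Phi^{-1}(y)$, so the Riemannian volume element reads $dv^y=\sqrt{\det G(z,y)}\,dz$ with $G={}^t(\partial_z\Psi)(\partial_z\Psi)$ the induced Gram matrix. Hence, after applying Fubini, the entire statement reduces to the pointwise identity
\[
(J\Phi)(\Psi(z,y))\,\lvert\det D\Psi(z,y)\rvert=\sqrt{\det G(z,y)}.
\]

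The heart of the matter, and the step I expect to be the main obstacle, is this linear-algebraic identity, which I would establish by fixing $x=\Psi(z,y)$ and setting $A=\partial_z\Psi$, $B=\partial_y\Psi$, $L=\Phi_{*x}$. Differentiating $\Phi\circ\Psi$ gives $LA=0$ and $LB=I_n$; since $\Psi$ is a diffeomorphism, the $d-n$ columns of $A$ are independent and, lying in $\ker L$, span the fiber tangent space $\ker L$. Splitting $\R^d=\ker L\oplus(\ker L)^\perp$ orthogonally and writing $P$ for the orthogonal projection onto $(\ker L)^\perp$, the base-times-height formula for parallelepipeds factors the volume as $\lvert\det[A\mid B]\rvert=\sqrt{\det({}^tAA)}\,\sqrt{\det({}^tB\,P\,B)}$, the first factor being $\sqrt{\det G}$ and the second the volume of the projected vectors $PB$. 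Finally, since $L$ restricts to an isomorphism $(\ker L)^\perp\to\R^n$ and $L\,PB=LB=I_n$, the map $PB\colon\R^n\to(\ker L)^\perp$ is the inverse of that restriction, whence $\sqrt{\det({}^tB\,P\,B)}=1/\sqrt{\det(L\,{}^tL)}=1/(J\Phi)(x)$. Multiplying the two factors yields $\lvert\det D\Psi\rvert=\sqrt{\det G}/(J\Phi)(x)$, which is exactly the required identity and completes the proof.
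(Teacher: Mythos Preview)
Your proof is correct and follows essentially the same route as the paper's: localize via the submersion normal form $\Psi$, apply change of variables and Fubini, and reduce to the pointwise identity $(J\Psi)=\sqrt{\det G}/(J\Phi)$. The only cosmetic difference is that the paper establishes this linear-algebraic identity by conjugating with a rotation $R$ that puts $\Psi_{*}$ in block upper-triangular form, whereas you phrase the same block-determinant computation as a ``base-times-height'' volume factorization; the content is identical.
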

\begin{proof}  We must show that
$$
\int_X f(x)\, dx = \int_{Y}\left( \int_{X}f(x)\frac{dv^y(x)}{(J\Phi)(x)}\right)dy 
$$
holds for every $f\in C_c(X)$. Fix $x_0\in X$. Since $\Phi_{*x_0}$ is surjective,  the Inverse Mapping Theorem implies (Corollary 5.8 in \cite{lang95}) that there exists a diffeomorphism $\Psi:U\times V \mapsto W$ such that
\begin{equation}
\Phi(\Psi(z,y))=y\qquad z\in U,\, y\in V,\label{eq:10}
\end{equation}
where $U$ is an open subset of $\R^{d-n}$,  $ V$ is an open subset of $\R^n$ and $W$ is an open neighborhood of $x_0$.

Take $f\in C_c(X)$. For any such $f$,  since $\supp{f}$ is compact, by choosing a suitable finite covering if necessary, we can always assume that $\supp{f}\subset W$. The change of variables formula and Fubini's Theorem give
\begin{align}
  \label{eq:18}
  \int_W f(x)\, dx & =\int_{V}\left( \int_{U}
  f(\Psi(z,y)){(J\Psi)(z,y)}\,dz\right)dy.
\end{align}
To obtain the coarea formula we simply  compute the Jacobian $J\Psi$. Observe that for any given $y\in V$, $\Psi^y=\Psi(\cdot,y)$ is a diffeomorphism from $U$ onto $W\cap \Phi^{-1}(y)$, regarded as a submanifold. In particular, using
this local chart, the volume element at the point $x=\Psi(z,y)$
is given by 
\begin{equation}
  \label{eq:14}
  dv^y(x)
  =\sqrt{\det{\left[^t(\Psi^y)_{*z}(\Psi^y)_{*z}\right]} }\, dz.
\end{equation}
Taking the derivatives of\eqref{eq:10} with respect to $z$ and $y$ separately, we obtain
\begin{equation}
\Phi_{*\Psi(z,y)} \,D_1\Psi_{(z,y)}   = 0,
\qquad
\Phi_{*\Psi(z,y)}\, D_2\Psi_{(z,y)}   = I_{n\times n} 
\label{eq:13}.
\end{equation}
Fix $(z,y)\in U\times V$ and let $P_1$ denote the orthogonal projection from $\R^d$ onto $\ker\Phi_{*\Psi(z,y)}$, and $P_2=I-P_1$  the orthogonal projection onto $[\ker\Phi_{*\Psi(z,y)}]^\perp$, which is a subspace of dimension $n$ because $\Phi$ is a submersion. From~\eqref{eq:13}  it follows that
\begin{equation}
P_2 (D_1\Psi)_{(z,y)}=0,
\qquad
P_2 (D_2\Psi)_{(z,y)}=(\Phi_{*\Psi(z,y)}\circ\iota)^{-1},
\label{eq:16}
\end{equation}
where $\iota:[\ker\Phi_{*\Psi(z,y)}]^\perp\to\R^d$ is the natural injection.
Let $R\in\text{O}(d)$ be the rotation that takes $\ker\Phi_{*\Psi(z,y)}$ onto the $z$-hyperplane (first $d-n$ coordinates)  and its orthogonal complement onto the $y$-hyperplane (last $n$ coordinates), so that $R P_1(z,y)=z$ and $RP_2(z,y)=y$. Then \eqref{eq:16} imply
$$
R \Psi_{*(z,y)} =
\begin{bmatrix}
  A  & B \\
   0   & C 
\end{bmatrix}
$$
where $A=R (D_1\Psi)_{(z,y)} $, $B=R P_1 (D_2\Psi)_{(z,y)} $ and 
$C=R P_2 (D_2\Psi)_{(z,y)}$. Therefore
$$
(J\Psi)(z,y)
=|\det{ R \Psi_{*(z,y)}}|
=|\det{A}|\,|\det{C}|
= \frac{\sqrt{\det{\left[^t(\Psi^y)_{*z}(\Psi^y)_{*z}\right]}}}{\sqrt{\det{ \left[\Phi_{*\Psi(z,y)}  
{^t}\Phi_{*\Psi(z,y)}\right]} }},
$$
where we have used~\eqref{eq:16}. Taking~\eqref{eq:14} into account,  for
$x=\Psi(z,y)$ we have
$$
(J\Psi)(z,y) \,dz = \left.\frac{dv^y(x)}{(J\Phi)(x)}\right.,
$$
which inserted in  \eqref{eq:18} yields the result.
\end{proof}

\section{The Jacobian criterion}\label{aldo}
We show below that Theorem~16.19 in \cite{eis95} implies that   for all $y\in
\Phi(\cR)$ the fiber $\Phi^{-1}(y)$ is finite. First of all, we can view $\Phi$ as a polynomial map  from $\C^d$ into itself, so we write $\Phi=(f_1,\dots, f_d)$. Without loss of generality we assume further that $y=0$. Following  \cite{eis95}, we write $S=\C[X_1,\dots,X_d]$ and we denote by $I$ the ideal in $S$  generated by $f_1,\dots, f_d$.
We are interested in its radical $\sqrt I$, which decomposes
as an intersection, unique up to order, of prime ideals $\sqrt I=P_1\cap\dots\cap P_s$. Hence
$V(I)=\{w\in\C^d:f_1(w)=\dots=f_d(w)=0\}=V_1\cup\dots\cup V_s$, the corresponding decomposition into irreducible components, namely $V_i=Z(P_i)$. Under the present circumstances, $\dim V_i=d-{\mathrm{codim}}(P_i)$, where the latter is the Krull codimension of $P_i$. Clearly, ${\mathrm{codim}}(P_i)=d$ if and only if $V_i$ is a singleton.
Suppose that $\dim V_j>0$ for some $j$. We will show 
that at the points $w\in V_j$ the Jacobian determinant 
\[
J\Phi(w)=\det\left(\frac{\partial f_i}{\partial w_j}(w)\right)
\]
vanishes. Suppose by contradiction that $J\Phi(w)\neq0$. Now, the codimension  of $I_{P_j}$
in $S_{P_j}$ is equal to ${\mathrm{codim}}(P_j)$ because $P_j$ is a minimal prime of $I$. By assumption, this is strictly smaller than $d$. By the Jacobian criterion, the Jacobian matrix taken modulo $P_j$ has rank strictly less than $d$. This means that $J\Phi\in P_j$. But $w\in V_j$, which implies that $J\Phi(w)=0$, a contradiction. Therefore $\Phi^{-1}(0)\cap\cR$ does not intersect irreducible components with positive dimension, hence it is a finite set.

\section*{acknowledgement} 
It is a pleasure to thank Aldo Conca  for indicating to us Theorem~16.19 in \cite{eis95} and the argument  used in Appendix~\ref{aldo}. We also thank Pietro Celada for useful discussions on the coarea formula.


\end{document}